\renewcommand{\arraystretch}{1.3}
\newtheorem{theorem}{Theorem}
\newtheorem{corollary}[theorem]{Corollary}
\newtheorem{lemma}[theorem]{Lemma}
\newtheorem{proposition}[theorem]{Proposition}
\newtheorem*{question}{Question}
\newtheorem*{conjecture}{Conjecture}
\theoremstyle{definition}\newtheorem{remark}{Remark}
\newcommand{\cC}{{\mathcal{C}}}
\DeclareMathOperator{\cm}{CM} 
\DeclareMathOperator{\tors}{{tors}}
\newcommand{\Q}{\mathbb Q}
\newcommand{\Qbar}{{\overline{\mathbb Q}}} 
\newcommand{\Z}{\mathbb Z}
\newcommand{\SL}{\operatorname{SL}}
\newcommand{\Gal}{\operatorname{Gal}}
\newcommand{\Aut}{\operatorname{Aut}}
\newcommand{\GL}{\operatorname{GL}}
\newenvironment{romanenum}{\hfill \begin{enumerate} }{\end{enumerate}}
\newenvironment{alphenum}{\hfill \begin{enumerate} }{\end{enumerate}}
\newenvironment{alphenumCM}{\hfill \begin{enumerate} }{\end{enumerate}}
\definecolor{light-gray}{gray}{0.95}
\begin{document}

\bibliographystyle{plain}
\title[Torsion of rational elliptic curves over sextic fields]{On the torsion of rational elliptic curves\\ over sextic fields}

\author{Harris B. Daniels}
\address{Department of mathematics and statistics, Amherst College, MA 01002, USA}
\email{hdaniels@amherst.edu} 

\author{Enrique Gonz\'alez-Jim\'enez}
\address{Universidad Aut{\'o}noma de Madrid, Departamento de Matem{\'a}ticas, Madrid, Spain}
\email{enrique.gonzalez.jimenez@uam.es}

\subjclass[2010]{Primary: 11G05; Secondary: 14H52,14G05}
\keywords{Elliptic curves, torsion subgroup, rationals, sextic fields.}
\thanks{The first author was partially  supported by the grant MTM2015--68524--P.}

\date{\today}

\begin{abstract}

Given an elliptic curve $E/\Q$ with torsion subgroup $G = E(\Q)_{\tors}$ we study what groups (up to isomorphism) can occur as the torsion subgroup of $E$ base-extended to $K$, a degree 6 extension of $\Q$. We also determine which groups $H = E(K)_{\tors}$ can occur infinitely often and which ones occur for only finitely many curves. This article is a first step towards a complete classification of torsion growth over sextic fields.
\end{abstract}

\maketitle

\section{Introduction}

A fundamental theorem in the field of arithmetic geometry states that given a number field $K$ and an elliptic curve $E$ defined over $K$, the set of $K$-rationals points on $E$, denoted $E(K)$, can be given the structure of a finitely generated abelian group. Further, it is well-known that the torsion subgroup of this group $E(K)_{\tors}$ is isomorphic to $\Z/m\Z \times \Z/mn\Z$ for some positive integers $m$ and $n$. For the sake of brevity we will write $\Z/n\Z =(n)$ and $\Z/n\Z\times\Z/mn\Z=(m,mn)$ and call $(m,mn)$ the torsion structure of $E$ over $K$. By abuse of notation, we write $E(K)_{\tors}=(m,mn)$. 

One of the main question in the theory of elliptic curves is the following:
\begin{question}
Given a positive integer $d$ what groups (up to isomorphism) arise as the torsion subgroup of an elliptic curve defined over a number field of degree $d$ over $\Q$?
\end{question}

Beginning with Mazur's classification of torsion structures over $\Q$ in \cite{Mazur1978}, mathematicians have spent conside\-rable time and effort answering different facets of this question. Mazur's result asserts that the possible torsion structures over $\Q$ belong to the set:
$$
\Phi(1) = \left\{ (n) \; : \; n=1,\dots,10,12 \right\} \cup \left\{ (2 ,2m) \; : \; m=1,\dots,4 \right\}.
$$
Over quadratic fields, the classification of torsion structures was completely settled in a series of papers by Kamienny \cite{K92} and Kenku and Momose \cite{KM88}. Currently there is no complete classification for the case over cubic extension in the literature, but there are many significant results towards such a classification. In order to describe what is known we define the following notation:
\begin{itemize}

\item Let $\Phi(d)$ be the set of groups up to isomorphism that occur as the torsion structure of an elliptic curve defined over a number field of degree $d$.
\item Let $\Phi^\infty(d)\subseteq \Phi(d)$ be the set of groups that arise for infinitely many $\Qbar$-isomorphism classes of elliptic curves defined over number fields of degree d,
\end{itemize}
For degree $d=1,2$, each group in $\Phi(d)$ occur for infinitely many elliptic curves and so $\Phi^\infty(d) = \Phi(d)$. While determining the set $\Phi(d)$ is still open for $d\geq 3$, the uniform boundedness of torsion on elliptic curves, proved by Merel \cite{Merel} states that for any $d$, there exists a bound $B(d)$ depending only on $d$ such that $|G|\leq B(d)$ for all $G\in \Phi(d)$. Thus, the set $\Phi(d)$ is finite for any $d$. While $\Phi(d)$ is not completely known, $\Phi^\infty(d)$ is known for $d = 3,4$ thanks to the work of Jeon et al. \cite{JKP04,JKP06} and $d=5,6$ by Derickx and Sutherland \cite{DS17}. In this article we make use of the case when $d=6$:
\begin{align*}
\Phi^\infty(6) =\ &\{(n) \,|\, n=1,\dots, 22,24,26,27,28,30\}\ \cup\ \{(2,2m) \,|\,m=1,\dots, 10\}\\
&\cup\ \{(3,3m) \,|\,m=1,\dots, 4\}\ \cup \{(4,4),(4,8),(6,6)\}.
\end{align*}
Unlike in the cases when $d=1$ or $2$ when $d = 3,5,$ or $6$ we know that $\Phi^\infty(d) \subsetneq \Phi(d)$ and in the case when $d=4$, it is not known if $\Phi(4)$ and $\Phi^\infty(4)$ coincide. To illustrate the case when $d=3$, Najman showed in \cite{N15a} that the elliptic curve  with Cremona label \href{http://www.lmfdb.org/EllipticCurve/Q/162b1}{\texttt{162b1}} has a point of order 21 defined over the field $\Q(\zeta_{9})^+ = \Q(\zeta_9+\zeta_9^{-1})$ where $\zeta_9$ is a primitive $9$-th root of unity and therefore $(21)\in \Phi(3)$, but $(21)\not\in \Phi^\infty(3)$. For degrees $d=5,6$, van Hoeij \cite{hoeij} gives examples of elliptic curves defined over a number field of degree $d$ (not coming from elliptic curves over $\Q$) with torsion structure not belonging to $\Phi^\infty(d)$. With these examples in mind we give the following definition:
\begin{itemize}
\item Let $J(d)\subseteq \overline{\Q}$ be the finite set defined by the following property: $j\in J(d)$ if and only if there exists a number field $K$ of degree $d$, and an elliptic curve $E/K$ with $j(E)=j$, such that $E(K)_{\tors}$ is isomorphic to a group in $\Phi(d)$ that is not in $\Phi^\infty(d)$. We let $J_\Q(d) = J(d) \cap \Q$ be the subset of $J(d)$ where we restrict to the case of elliptic curves $E$ defined over $\Q$. 
\end{itemize}
Since $\Phi(d) = \Phi^\infty(d)$ when $d=1$ or $2$ we have that $J(1) = J(2) = \emptyset$ and Najman's examples shows that $-140625/8\in J_\Q(3)$.

The question at the center of this paper is how torsion subgroups of elliptic curves can grow when they are considered over larger fields of definition. In particular, we will consider how the torsion subgroup of elliptic curves defined over $\Q$ can change when we consider them over a sextic extension of $\Q$. The techniques in the paper are closely related to those found in \cite{GJLR18} and following that paper we give the following definitions. 
\begin{itemize}
\item Let $\Phi_\Q(d)$  be the subset of $\Phi(d)$  such that $H\in \Phi_\Q(d)$ if there is an elliptic curve $E/\Q$ and a number field $K$ of degree $d$ such that $E(K)_{\tors}= H$. Similarly, the set $\Phi_\Q^\infty(d)$ is contained inside the set $\Phi^\infty(d)$.
\item Let $\Phi_{\Q}^{\star}(d)$ be the intersection of the sets $\Phi_\Q(d)$ and  $\Phi^{\infty}(d)$. 
\item Fixed $G \in \Phi(1)$, let $\Phi_\Q(d,G)$ be the subset of $\Phi_\Q(d)$ such that $E$ runs through all elliptic curves over $\Q$ such that $E(\Q)_{\tors}= G$. Also, let $\Phi^{\star}_\Q(d,G)=\Phi_\Q(d,G)\cap \Phi^{\infty}(d)$.
\item Let $R_\Q(d)$ be the set of all primes $p$ such that there exists a number field $K$ of degree $d$ and an elliptic curve $E/\Q$ such that $E$ has a point of order $p$ defined over $K$. 
\end{itemize}
In the case when  $d=2,$ or $3$, the sets $\Phi_{\mathbb Q}(d)$ have been completely described by Najman \cite{N15a}:
$$
\begin{array}{l}
\Phi_{\mathbb Q}(2)=\left\{ (n) \; | \; n=1,\dots, 10, 12, 15, 16\right\}\cup \left\{ (2,2m) \; | \; m=1,\dots ,6 \right\}\cup \left\{ (3,3),\,(3,6),\,(4,4) \right\}, \hbox{ and } \\
\Phi_{\mathbb Q}(3)=\left\{ (n) \; | \; n=1,\dots, 10, 12, 13, 14, 18, 21\right\}\cup \left\{ (2,2m) \; | \; m=1,2,3,4,7 \right\}.
\end{array}
$$
The sets $\Phi_\Q(4)$,  $\Phi_\Q(5)$, and $\Phi_\Q(7)$ have been completely classifies in \cite{Chou16, GJN16}, \cite{GJ17}, and \cite{GJN16} respectively. Moreover in \cite{GJN16} it has been\footnote{Let $E/\Q$ be an elliptic curve and $K/\Q$ a number field of degree $d$ whose prime divisors are greater than $7$, then $E(K)_{\tors}=E(\Q)_{\tors}$ (see Remark 7.5 \cite{GJN16}).}  established  $\Phi_\Q(d)=\Phi(1)$ for any positive integer $d$ whose prime divisors are greater than $7$. \\

For any $G\in\Phi(1)$ the set $\Phi_\Q(d,G)$ has been determined for $d=2$ in \cite{GJT14}, for $d=3$ in \cite{GJNT16}, for $d=4$ in \cite{GJLR18}, for $d=5$ in \cite{GJ17}, for $d=7$ in \cite{GJN16} and for any $d$ whose prime divisors are greater than $7$ in \cite{GJN16}.\\

Further, in \cite{GJN16} the second author and Najman determine all the possible degrees of $[\Q(P):\Q]$, where $P$ is a point of prime order $p$ for a set of density $1535/1536$ of all primes and in particular for all $p<3167$. In particular, we have that
$$
R_\Q(6)=\{2,3,5,7,13\}.
$$
Finally, we points out that obtaining the set $\Phi_{\Q}^{\star}(4)$ was a main tool to the determination of the whole classification of $\Phi_{\Q}(4)$. Thus, a first attempt to obtain $\Phi_\Q(6)$ is to obtain $\Phi^*_\Q(6)$ and that is our first main result.

\begin{theorem}\label{main1} The set $\Phi^\star_\Q(6)$ is given by
\begin{align*}
\Phi_{\mathbb Q}^{\star}(6)=\ & \left\{ (n) \; | \; n=1,\dots,21,\,n\ne 11,17,19,20\right\}\cup\left\{(30) \right\}  \\ 
& \cup \left\{ (2,2n) \; | \; n=1,\dots,7,9 \right\} \cup  \left\{ (3,3n) \; | \; n=1,\dots,4 \right\} \cup  \left\{ (4,4),(6,6) \right\},
\end{align*}
and $\Phi_\Q^\infty(6)=\Phi_\Q^\star(6)\setminus \{(15),(21),(30) \}$. In particular, if $E/\Q$ is an elliptic curve with $j(E)\notin J_\Q(6)$, then $E(K)_{\tors}\in \Phi_\Q^\star(6)$, for any number field $K/\Q$ of degree $6$. Moreover, if $E/\Q$ is an elliptic curve with $E(K)_{\tors}= H\in \{(15),(21),(30) \}$  over some sextic field $K$, then 
\begin{romanenum}
\item $H=(21)$: $j(E) \in \{3^3\cdot 5^3/2,-3^2\cdot5^3\cdot 101^3/2^{21},-3^3\cdot 5^3\cdot 382^3/2^7,-3^2\cdot 5^6/2^3\}$.
\item $H=(15)$: then $E$ has Cremona label \href{http://www.lmfdb.org/EllipticCurve/Q/50a3}{\texttt{50a3}},\href{http://www.lmfdb.org/EllipticCurve/Q/50a4}{\texttt{50a4}},\href{http://www.lmfdb.org/EllipticCurve/Q/50b1}{\texttt{50b1}},\href{http://www.lmfdb.org/EllipticCurve/Q/50b2}{\texttt{50b2}},\href{http://www.lmfdb.org/EllipticCurve/Q/450b4}{\texttt{450b4}}, or \href{http://www.lmfdb.org/EllipticCurve/Q/450b3}{\texttt{450b3}}.
\item $H=(30)$: then $E$ has Cremona label \href{http://www.lmfdb.org/EllipticCurve/Q/50a3}{\texttt{50a3}},\href{http://www.lmfdb.org/EllipticCurve/Q/50b1}{\texttt{50b1}},\href{http://www.lmfdb.org/EllipticCurve/Q/50b2}{\texttt{50b2}}, or \href{http://www.lmfdb.org/EllipticCurve/Q/450b4}{\texttt{450b4}}.
\end{romanenum}
\end{theorem}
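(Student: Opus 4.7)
The plan is to prove Theorem \ref{main1} by systematically analyzing each group $H \in \Phi^\infty(6)$ and deciding whether $H$ belongs to $\Phi_\Q(6)$, then separating the groups that arise infinitely often from those that arise for only finitely many curves. The first reduction is automatic: for any $H \in \Phi_\Q(d)$ with $d \in \{1,2,3\}$, we have $H \in \Phi_\Q(6)$ by extending the degree-$d$ field inside a sextic $K$. Najman's explicit descriptions of $\Phi_\Q(2)$ and $\Phi_\Q(3)$ recorded in the introduction thereby pack a large batch of groups into $\Phi_\Q^\star(6) = \Phi_\Q(6) \cap \Phi^\infty(6)$ at no cost, handling most of the small cyclic cases as well as $(2,2n)$ for small $n$ and $(3,3),(3,6),(4,4)$.

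For the remaining candidate groups in $\Phi^\infty(6)$, the strategy centers on the mod-$N$ Galois representation $\rho_{E,N}\colon \Gal(\overline{\Q}/\Q) \to \mathrm{GL}_2(\Z/N\Z)$ with $N = mn$. The condition $E(K)_{\tors} \supseteq H$ for a sextic $K/\Q$ translates to the image $\rho_{E,N}(\Gal(\overline{\Q}/\Q))$ having a subgroup of index dividing $6$ that fixes a subgroup of $E[N]$ isomorphic to $H$. The prime-order obstruction $R_\Q(6) = \{2,3,5,7,13\}$ immediately eliminates $(11), (17), (19), (22)$. For the composite-order candidates $(20), (24), (26), (27), (28), (2,16), (2,20)$, invoke the known classification of mod-$N$ images for $E/\Q$ together with an analysis of rational points on $X_1(N)$ or on the relevant fiber product $X_1(m,mn)$, following the techniques of \cite{GJLR18}; in each case either the associated modular curve has no sextic point giving rise to such a structure, or the base of such a family fails to lift to an elliptic curve defined over $\Q$.

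The three sporadic cases $(15), (21), (30)$ require explicit identification of the finitely many $j$-invariants. A point $P$ of order $21$ defined over a sextic $K$ forces the cyclic subgroup $\langle P \rangle \subset E$ to be preserved by $\Gal(\overline{\Q}/K)$; taking its Galois closure over $\Q$ yields a $\Q$-rational cyclic subgroup of order $21$, which by the classification of rational isogenies forces $E$ to admit a $\Q$-rational $21$-isogeny. Since $X_0(21)$ has genus $1$ and rank $0$, there are exactly four such $j$-invariants, and case-by-case verification on each curve then recovers the listed values. The analysis for $(15)$ is parallel, starting from $\Q$-curves with a rational $15$-isogeny (the isogeny classes with Cremona labels $\texttt{50a}$, $\texttt{50b}$, and their quadratic twists captured by $\texttt{450b}$), and the case $(30)$ imposes the additional $2$-torsion constraint that cuts the candidate list down to the four curves stated.

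Finally, the equality $\Phi_\Q^\infty(6) = \Phi_\Q^\star(6) \setminus \{(15),(21),(30)\}$ is established by producing, for every remaining group $H$ in $\Phi_\Q^\star(6)$, an explicit one-parameter family of $E/\Q$ realizing $E(K)_{\tors} = H$ over a sextic $K$ (pulled back from the corresponding family in lower degree, or parametrized by a positive-dimensional component of the relevant modular curve), while the sporadic cases above give only finitely many curves. The main obstacle will be the negative part of the second step: ruling out the composite cyclic orders and mixed cases such as $(20), (24), (27), (28), (2,16)$ and $(2,20)$ for $E/\Q$ over sextic fields is delicate, because the field of definition of a torsion point can collapse unexpectedly when $\rho_{E,N}$ has exceptional image, so the genuinely hard work is distinguishing exceptional $j$-invariants in $J_\Q(6)$ from true non-examples via careful bookkeeping of isogeny-torsion graphs and sextic-rational points on the relevant modular curves.
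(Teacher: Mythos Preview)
Your overall architecture matches the paper's, but two points deserve correction.

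First, your argument for $H=(21)$ has a genuine gap. You write that since $\langle P\rangle$ is $\Gal(\overline{\Q}/K)$-stable, ``taking its Galois closure over $\Q$ yields a $\Q$-rational cyclic subgroup of order $21$.'' This does not follow: the $\Gal(\overline{\Q}/\Q)$-orbit of a $K$-rational cyclic subgroup need not be a single cyclic subgroup (think of an $E$ with $G_E(3)$ contained in a split Cartan, where Galois over $\Q$ can permute distinct order-$3$ subgroups). The paper instead proves a key lemma (Lemma~\ref{lemIsog}) via the explicit classification of mod-$p$ images: for a non-CM $E/\Q$ with a point of odd prime order $p\in\{3,5,7,13\}$ over a sextic field, one checks from Table~\ref{TABLE} that $G_E(p)$ must be Borel, hence $E$ has a rational $p$-isogeny (with two explicit exceptions at $p=7$ coming from \texttt{7Ns.2.1}). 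Applying this separately to the $3$- and $7$-parts of $P$ gives independent rational $3$- and $7$-isogenies, hence a rational $21$-isogeny, and then $X_0(21)(\Q)$ finishes the job. Your conclusion is correct, but the mechanism you propose would not establish it.

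Second, your list of groups to exclude omits $(4,8)\in\Phi^\infty(6)$, which the paper rules out via the $2$-primary analysis of Proposition~\ref{teo}(\ref{t2}) using the Rouse--Zureick-Brown classification. More generally, the paper's treatment of the $2$-primary growth is not a loose appeal to $X_1(m,mn)$ but a systematic tabulation (Table~\ref{Tab:2-prim}) of which $2$-adic images permit each growth pattern over a sextic; this is what kills $(2,16)$, $(4,8)$, and the $2$-part of $(20),(24),(28),(2,20)$. Finally, for $(15)$ and $(30)$ the paper pins down specific $\Q$-isomorphism classes (not merely $j$-invariants) by taking fiber products such as $X_{\texttt{3B.1.1}}\times_{X(1)} X_{\texttt{5B.4.1}}$ and then selecting the unique twist with the required rational torsion; your sketch stops at the isogeny class and would need this extra step.
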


Our second result determines $\Phi_\Q^\star(6,G)$ for each $G \in\Phi(1)$:

\begin{theorem}\label{main2}
For each $G \in \Phi(1)$, the set $\Phi^{\star}_\Q(6,G)$ is given in the following table:
$$
\begin{array}{|c|c|}
\hline
G & \Phi^{\star}_\mathbb{Q} \left(6,G \right)\\
\hline
(1) & \begin{array}{c} \left\{
 (1)\,,\,
 (2)\,,\,
 (3)\,,\,
 (4)\,,\,   
 (5)\,,\,
 (6) \,,\,
 (7) \,,\,
 (9) \,,\,
 (10)\,,\, 
 (12)\,,\,
 (13)\,,\,
 (14)\,,\,
 (15)\,,\, \right.\\ \left.
 (18)\,,\,
 (21)\,,\, 
 (2,2)\,,\, 
 (2,6)\,,\, 
 (2,10)\,,\, 
 (2,14)\,,\, 
 (2,18)\,,\, 
 (3,3)\,,\, 
 (3,9)\,,\, 
 (4,4)\,,\, 
 (6,6)\, \right\} \\   \end{array}\\ 

\hline
(2) &\begin{array}{c} \left\{ (2)\,,
(4)\,,\,
(6)\,,\,
(8)\,,\,
(10)\,,\,
(12)\,,\,
(14)\,,\,
(16)\,,\,
(18)\,,\,
 (2,2)\,,\, \right.\\ \left.
  (2,6)\,,\,
  (2,10)\,,\,
  (2,14)\,,\, 
  (2,18)\,,\,
  (3,6)\,,\,
  (3,12)\,,\,
   (6,6)\, \right\} 
   \end{array}\\ 
\hline
(3) & \begin{array}{c}
\left\{ 
(3)\,,\, 
(6)\,,\,
(9)\,,\, 
 (12)\,,\, 
(15)\,,\,
(21)\,,\, 
(30)\,,\, 
(2,6)\,,\,  \right.\\ \left.
(3,3)\,,\,
(3,6)\,,\,
(3,9)\,,\,
{  (6,6)}\,\right\} \\
   \end{array}\\ 

\hline
(4) & 
\left\{ 
(4) \,,\,  
(8)\,,\,
 {  (12)}\,,\,
{   (2,4)}\,,\,
  (2,8)\,,\, 
  (2,12)\,,\,
  (3,12)\,,\,
  (4,4)\, \right\} \\
\hline
(5) & \left\{ (5)\,,\, (10)\,,\, {  (15) }\,,\,(30)\,,\,(2,10)\, \right\} \\
\hline
(6) & \left\{ (6)\,,\, 
(12)\,,\,
(18)\,,\,
  {    (2,6)}\,,\,
  (2,18)\,,\,
   {   (3,6) }\,,\,
   {   (3,12) }\,,\,
  (6,6) \, \right\} \\
\hline
(7) & \left\{ (7)\,,\, 
(14)\,,\,    {   (2,14) }
  \,\right\} \\
\hline
(8) & \left\{ (8)\,,\,(16)\,,\,
   (2,8)\, \right\} \\
\hline
(9) & \left\{ (9)\,,\,(18)\,,\,(2,18)\,,\,
 (3,9)
 \, \right\} \\
\hline
(10) & \left\{ (10)\,,\, 
 {   (2,10)}\, \right\} \\
\hline
(12) & \left\{ (12)\,,\, 
 {   (2,12)}\,,\, 
 {   (3,12)} \right\} \\
\hline
{(2,2)}& 
\left\{ (2,2)\,,\,
 (2,4)\,,\,
 {   (2,6)} \,,\,
  (2,8)\,,\,  
   (2,12)\,,\,
   (6,6)\, \right\}
\\
\hline
(2,4) & \left\{ (2,4)\,,\,
 (2,8)\,,\,
   (4,4)\, \right\} \\
\hline
(2,6) & \left\{ (2,6)\,,\, (2,12)\,,\,(6,6)\, \right\} \\
\hline
(2,8) & \left\{ (2,8)\, \right\} \\
\hline
\end{array}
$$
\end{theorem}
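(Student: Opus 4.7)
The plan is to reduce Theorem \ref{main2} to a finite case analysis and then handle each pair $(G,H)$. First, since $\Phi_\Q^\star(6,G) \subseteq \Phi_\Q^\star(6)$ for every $G \in \Phi(1)$, Theorem \ref{main1} limits us to $24$ candidate groups $H$ for each $G$. I would further prune these using three standard necessary conditions: (a) $G$ must embed as a subgroup of $H$; (b) every prime divisor of $|H|$ must lie in $R_\Q(6) = \{2,3,5,7,13\}$; (c) the field $L = \Q(E[N])^{\mathrm{Stab}(H)}$ over which the new torsion is defined must have degree at most $6$ over $\Q$, which via the orbit-stabilizer theorem and the Weil pairing inclusion $\Q(\zeta_m) \subseteq \Q(E[m])$ rules out most combinations where $H/G$ is too large or forces roots of unity of prohibitively high order.

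For the positive direction, for each entry $H$ claimed in $\Phi_\Q^\star(6,G)$ I would exhibit an infinite family of $\overline{\Q}$-isomorphism classes of curves $E/\Q$ realising it. The most efficient source is base change from degrees $2$ and $3$: $\Phi_\Q(2,G)$ and $\Phi_\Q(3,G)$ are known from \cite{GJT14, GJNT16}, and by intersecting with $\Phi^\infty(2) = \Phi(2)$ and $\Phi^\infty(3)$ one reads off $\Phi_\Q^\star(2,G)$ and $\Phi_\Q^\star(3,G)$ directly. If an infinite family realises $H$ over a quadratic (resp. cubic) extension $F/\Q$, then for a generic extension $F'/\Q$ of the complementary degree, the compositum $FF'$ is a sextic field over which the same curves acquire no extra torsion, producing infinite families in degree $6$. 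This accounts for most of the rows of the table automatically.

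For the purely sextic entries not obtained by base change, one parametrises elliptic curves $E/\Q$ with $E(\Q)_{\tors} = G$ by the rational points of $X_1(G)$ and analyses the locus on which a chosen $H \supseteq G$ becomes rational over some sextic field. Concretely this amounts to studying a correspondence on $X_1(G)$ defined by a Galois orbit of size dividing $6$ in $E[N]\setminus G$ (where $N$ is the exponent of $H$), and checking whether the associated component has infinitely many rational points via genus and, where relevant, Mordell--Weil rank computations. The hard part will be this modular-curve analysis for a handful of delicate entries together with the bookkeeping of all Galois-module configurations compatible with the image of $\rho_{E,N}$, especially for entries like $(4,4) \in \Phi_\Q^\star(6,(4))$ or $(3,9) \in \Phi_\Q^\star(6,(1))$ where the degree $6$ is essential.

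Finally, the exclusions are handled by complementary arguments. Most $H$ absent from a given row are ruled out directly by one of the obstructions (a)--(c), or by the sharper constraint coming from the mod-$N$ Galois representation attached to a curve with $E(\Q)_{\tors} = G$. The remaining borderline cases involve the three \emph{finite} entries $(15), (21), (30)$ of $\Phi_\Q^\star(6) \setminus \Phi_\Q^\infty(6)$, which by Theorem \ref{main1} are realised only by the explicit list of Cremona curves enumerated there; for each of those curves one computes $E(\Q)_{\tors}$ directly and assigns the entry to the corresponding row of the table, simultaneously confirming its inclusion there and its exclusion from every other row.
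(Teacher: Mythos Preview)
Your high-level strategy matches the paper's: restrict to $H\in\Phi_\Q^\star(6)$ via Theorem~\ref{main1}, rule out pairs with $G\not\subseteq H$, supply positive examples (mostly inherited from degrees $2$ and $3$, with explicit sextic examples for the rest), and eliminate the remaining candidates by Galois-theoretic arguments. The paper organises this as a single large table (Table~\ref{table1}) cross-referencing each excluded pair $(G,H)$ to a specific part of Proposition~\ref{teo} or~\ref{teoCM}, and each included pair to an explicit curve.

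Where your proposal is thin is precisely where the paper's work lies. Your obstructions (a)--(c) are genuine but weak; they do not rule out, for instance, $(G,H)=((3),(18))$, $((3),(3,12))$, $((2,2),(2,14))$, $((7),(21))$, or $((3),(2,18))$. The paper handles each of these by a bespoke argument: field-diagram analysis of $\mathcal{C}_3\times S_3$ extensions, fiber products of $X_0(n)$ with curves from the Rouse--Zureick-Brown database and explicit rational-point enumeration on the resulting genus-$1$ to genus-$3$ curves, isogeny-chain arguments forcing impossible rational $28$- or $24$-isogenies, and so on. Your phrase ``sharper constraint coming from the mod-$N$ Galois representation'' is pointing in the right direction but does not by itself constitute a method; the paper's Propositions~\ref{teo} and~\ref{teoCM} comprise over twenty separate lemmas, several requiring nontrivial computation.

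Two smaller points. First, for the positive direction you only need \emph{one} example of $E/\Q$ with $E(\Q)_{\tors}=G$ and $E(K)_{\tors}=H$ for some sextic $K$, since membership in $\Phi^\infty(6)$ is already granted; insisting on infinite families is unnecessary and indeed impossible for $(15),(21),(30)$, which you correctly flag separately. Second, for $(21)$ Theorem~\ref{main1} gives only $j$-invariants, not a finite list of curves, so your plan to ``compute $E(\Q)_{\tors}$ directly'' for each listed curve does not literally apply there; the paper instead proves directly (Proposition~\ref{teo}(\ref{t21})) that $G=(7)$ forces $(21)\not\subseteq H$.
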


\begin{remark}\label{spo}
In \cite{GJN18}, the second author and Najman show that $(4,12)\in \Phi_{\mathbb Q}(6)\setminus \Phi_{\mathbb Q}^{\star}(6)$. In particular, $(4,12)\in \Phi(6)\setminus \Phi^{\infty}(6)$.
Moreover, they show that if $E/\Q$ is an elliptic curve and $K/\Q$ is a sextic extension such that $E(K)_{\tors}=(4,12)$ then the Cremona label of $E$ is \href{http://www.lmfdb.org/EllipticCurve/Q/162d1}{\texttt{162d1}} or \href{http://www.lmfdb.org/EllipticCurve/Q/1296h1}{\texttt{1296h1}} and $K=\Q(\alpha,i)$ where $\alpha^3-3\alpha-4$. In particular, these elliptic curve are $\overline{\Q}$-isomorphic and $j(E)=109503/64\in J_\Q(6)$.
\end{remark}

\begin{corollary}
If $J_\Q(6)=\{109503/64\}$, then $\Phi_{\Q}(6)=\Phi_{\mathbb Q}^{\star}(6)\cup \{(4,12)\}$ and 
$\Phi_{\Q}(6,G)=\Phi_{\mathbb Q}^{\star}(6,G)$ if $G\ne (1),(3)$ or $\Phi_{\Q}(6,G)=\Phi_{\mathbb Q}^{\star}(6,G)\cup \{(4,12)\}$ if $G=(1)$ or $ (3)$.
\end{corollary}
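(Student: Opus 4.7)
The plan is to deduce everything from Theorem~\ref{main1}, Remark~\ref{spo}, and the hypothesis $J_\Q(6)=\{109503/64\}$ by set-theoretic bookkeeping, using the rational torsion subgroups of the two specific curves pinpointed in Remark~\ref{spo}.

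For the equality $\Phi_\Q(6)=\Phi_\Q^{\star}(6)\cup\{(4,12)\}$: the inclusion $\supseteq$ is immediate since $\Phi_\Q^{\star}(6)\subseteq\Phi_\Q(6)$ by definition and $(4,12)\in\Phi_\Q(6)$ by Remark~\ref{spo}. Conversely, given $H\in\Phi_\Q(6)$ witnessed by $E/\Q$ and a sextic $K$ with $E(K)_{\tors}=H$, either $H\in\Phi^{\infty}(6)$ and then $H\in\Phi_\Q^{\star}(6)$, or $H\notin\Phi^{\infty}(6)$ and then $j(E)\in J_\Q(6)=\{109503/64\}$. In the latter case Remark~\ref{spo} (together with the analysis in \cite{GJN18} it summarises) pins down $H=(4,12)$ as the unique torsion structure outside $\Phi^{\infty}(6)$ realisable by any $\Q$-curve with this $j$-invariant over a sextic field.

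For the refinement to each $G\in\Phi(1)$: every $E/\Q$ with $j(E)=109503/64$ is a $\Q$-quadratic twist of \texttt{162d1}, and Remark~\ref{spo} identifies the two particular twists \texttt{162d1} (for which $E(\Q)_{\tors}=(3)$) and \texttt{1296h1} (for which $E(\Q)_{\tors}=(1)$) as the only curves over $\Q$ attaining $(4,12)$ over a sextic field. Thus $(4,12)\in\Phi_\Q(6,G)$ if and only if $G\in\{(1),(3)\}$. For any other $G$, any $E/\Q$ with $E(\Q)_{\tors}=G$ has $j(E)\neq 109503/64$, so $j(E)\notin J_\Q(6)$ and Theorem~\ref{main1} forces $E(K)_{\tors}\in\Phi_\Q^{\star}(6)$ for every sextic $K$; combined with $\Phi_\Q^{\star}(6,G)\subseteq\Phi_\Q(6,G)$ this gives $\Phi_\Q(6,G)=\Phi_\Q^{\star}(6,G)$ outside $G\in\{(1),(3)\}$, while for $G\in\{(1),(3)\}$ the same reasoning allows only the extra element $(4,12)$.

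The main (hidden) obstacle is the step inside Remark~\ref{spo}: verifying that, among all $\Q$-twists of \texttt{162d1}, only \texttt{162d1} and \texttt{1296h1} ever realise the torsion structure $(4,12)$ over a sextic field, and that no other non-$\Phi^{\infty}(6)$ structure is realised by such a twist. Once this is granted together with the elementary check of $E(\Q)_{\tors}$ for these two specific curves, the corollary reduces to the set-theoretic manipulations above.
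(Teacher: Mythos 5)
Your argument is correct and is essentially the route the paper intends: the corollary is stated without proof as an immediate consequence of Theorem \ref{main1}, Remark \ref{spo}, and the hypothesis on $J_\Q(6)$, and your set-theoretic bookkeeping (including reading off $E(\Q)_{\tors}=(3)$ for \texttt{162d1} and $(1)$ for \texttt{1296h1}) is exactly that deduction. You are also right to flag the one substantive input --- that among the quadratic twists of \texttt{162d1} only the two curves of Remark \ref{spo} realise a structure outside $\Phi^{\infty}(6)$ over a sextic field --- which the paper delegates to \cite{GJN18} via that remark rather than proving here.
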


\begin{conjecture}
$\Phi_{\mathbb Q}(6)=\Phi_{\mathbb Q}^{\star}(6)\cup \{(4,12)\}=\Phi_{\mathbb Q}(2)\cup \Phi_{\mathbb Q}(3)\cup\{(30),(2,18),(3,9),(3,12),(4,12),(6,6)\}$.
\end{conjecture}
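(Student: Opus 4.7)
The plan is a short case analysis combining the hypothesis with Theorem~\ref{main1} and Remark~\ref{spo}. I would first establish the set equality $\Phi_\Q(6)=\Phi^\star_\Q(6)\cup\{(4,12)\}$ as a whole, and then refine it fibrewise over $G=E(\Q)_\tors$ to obtain the statement for $\Phi_\Q(6,G)$.

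For the easy containment $\Phi^\star_\Q(6)\cup\{(4,12)\}\subseteq \Phi_\Q(6)$, the first set is contained by the very definition of $\Phi^\star_\Q(6)$, while $(4,12)\in\Phi_\Q(6)$ is the content of Remark~\ref{spo}. For the reverse containment, I take an arbitrary $H=E(K)_\tors\in\Phi_\Q(6)$ with $E/\Q$ and $[K:\Q]=6$, and split on whether $H$ lies in $\Phi^\infty(6)$. If it does, then $H\in\Phi_\Q(6)\cap\Phi^\infty(6)=\Phi^\star_\Q(6)$ and we are done. Otherwise $H\in\Phi(6)\setminus\Phi^\infty(6)$, and by the definition of $J_\Q(6)$ together with the hypothesis $J_\Q(6)=\{109503/64\}$, one is forced to have $j(E)=109503/64$; that is, $E$ is a quadratic twist of the curve with Cremona label \texttt{162d1}. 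One then invokes the classification from \cite{GJN18} underlying Remark~\ref{spo} to conclude that the only torsion subgroup in $\Phi(6)\setminus\Phi^\infty(6)$ that can be realized by any such twist over any sextic field is $(4,12)$, forcing $H=(4,12)$.

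The refinement to $\Phi_\Q(6,G)$ is then pure bookkeeping. By the argument above, every $H\in\Phi_\Q(6,G)$ either lies in $\Phi^\star_\Q(6,G)$ (classified in Theorem~\ref{main2}) or equals $(4,12)$. For the latter, Remark~\ref{spo} pins the parent curve down to either \texttt{162d1} or \texttt{1296h1}, so I would simply read off $E(\Q)_\tors$ for these two specific curves from the Cremona/LMFDB tables (or by a one-line computation). The resulting values of $G$ are precisely $(1)$ and $(3)$, which explains why $(4,12)$ must be appended to $\Phi^\star_\Q(6,G)$ exactly when $G\in\{(1),(3)\}$ and to no other $G$.

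The main obstacle is the appeal needed in the case $H\notin\Phi^\infty(6)$: Remark~\ref{spo} as stated only gives the converse implication (``if $E(K)_\tors=(4,12)$, then $E$ is one of those two specific curves''), so to finish one genuinely needs the stronger consequence from \cite{GJN18}, namely that no other sporadic torsion structure over a sextic is realized by any quadratic twist of \texttt{162d1}. Once that external input is granted, the corollary reduces to the hypothesis together with Theorems~\ref{main1} and~\ref{main2}.
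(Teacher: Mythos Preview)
The statement you have been asked to address is a \emph{Conjecture}, not a theorem; the paper explicitly labels it as such and offers no proof. The open content is the first equality $\Phi_\Q(6)=\Phi^\star_\Q(6)\cup\{(4,12)\}$: establishing it would require showing that no elliptic curve over $\Q$ realizes any sporadic torsion structure over a sextic field other than $(4,12)$, which in particular requires control over $J_\Q(6)$ that is not currently available. Your argument does not address this; instead you \emph{assume} $J_\Q(6)=\{109503/64\}$ as a hypothesis throughout. What you have actually sketched is a proof of the \emph{Corollary} that immediately precedes the Conjecture in the paper, not of the Conjecture itself. Your discussion of $\Phi_\Q(6,G)$ confirms the mix-up: that assertion belongs to the Corollary, not to the Conjecture.

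For completeness: the second equality in the Conjecture, $\Phi^\star_\Q(6)\cup\{(4,12)\}=\Phi_\Q(2)\cup\Phi_\Q(3)\cup\{(30),(2,18),(3,9),(3,12),(4,12),(6,6)\}$, is unconditional and is exactly equation~(\ref{inf6}) in the proof of Theorem~\ref{main1} with $(4,12)$ adjoined to both sides, so nothing needs to be argued there. But the first equality is genuinely open, and no argument built only on Theorem~\ref{main1}, Theorem~\ref{main2}, and Remark~\ref{spo} can close it.
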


\subsubsection*{Notation} 
Any specific elliptic curves mentioned in this paper will be referred to by Cremona label \cite{cremonaweb} and a link to the corresponding LMFDB page \cite{lmfdb} will be included for the ease of the reader. Conjugacy classes of subgroups of $\GL_2(\Z/p\Z)$ be referred to by the labels introduced by Sutherland in \cite[\S 6.4]{Sutherland2}. We write $G=H$ (or $G\subseteq H$) for the fact that $G$ is isomorphic to $H$ (or to a subgroup of $H$ resp.) without further detail on the precise isomorphism. By abuse of notation, we write $\Z/n_1\Z\times\cdots\times\Z/n_r\Z=(n_1,\dots,n_r)$.

\begin{remark} 
All of the computations in this paper have been performed using Magma \cite{magma} and some of the code has been taken from \cite{Q3}. All of the code needed to reproduce these computations can be found at \cite{magmascripts}. 
\end{remark}

\subsection{Acknowledgments} The authors would like to thank \'Alvaro Lozano-Robledo and Jeremy Rouse for helpful conversations while working on this project. We would also like to the the referee for helpful comments on an earlier draft of this paper and the editor for an efficient editorial process. 

\section{Background Information}\label{sec:aux}

Central to proving the main results of this paper is understanding fields of definition of the points of order $n$ on an elliptic curve defined over $\Q$. In order to do this we introduce an auxiliary object called the mod $n$ Galois representations associated to the torsion points of elliptic curves. First, fix an algebraic closure $\Qbar$ of $\Q$. Given an elliptic curve $E/\Q$ and an integer $n\geq 2$ we let $E[n] = \{ P \in E(\Qbar) \mid [n]P = \mathcal{O}\}$ be the subgroup of $E(\Qbar)$ consisting of the point of order dividing $n$. The absolute Galois group $\Gal(\Qbar/\Q)$ act coordinate-wise  on the point of $E[n]$ and this action induces a representation 
\[
\rho_{E,n}\colon \Gal(\Qbar/\Q) \to \Aut(E[n]).
\]
It is a classical result that $E[n]$ is a free rank two $\Z/n\Z$-module and thus fixing a basis $\{P,Q\}$ for $E[n]$ we have that $\Aut(E[n])$ is isomorphic to a subgroup of $\GL_2(\Z/n\Z)$. Therefore, we can write the Galois representation as 
\[
\rho_{E,n}\colon \Gal(\Qbar/\Q) \to \GL_2(\Z/n\Z).
\]
where the image of $\rho_{E,n}$, denoted $G_E(n)$, is determined up to conjugacy (i.e.~a choice of basis for $E[n]$). If we let $\Q(E[n]) = \Q(\{x,y\mid (x,y) \in E[n]  \})$ be the field of definition of the $n$-torsion points on $E$, then $\Q(E[n])/\Q$ is a Galois extension and since $\ker\rho_{E,n} = \Gal(\Qbar/\Q(E[n]))$, from elementary Galois theory we have that $\Gal(\Q(E[n])/\Q)\simeq G_E(n)$.

Given a point $R\in E[n]$, we will denote the $x$- and $y$-coordinates of $R$ by $x(R)$ and $y(R)$ respectively and the field of definition for $R$ will be be denoted by $\Q(R) = \Q(x(R),y(R))$. From Galois theory, we know that there is a subgroup $\mathcal{H}_R$ of $\Gal(\Q(E[n])/\Q)$ such that $\Q(R)$ is the subfield of $\Q(E[n])$ fixed by $\mathcal{H}_R$. Letting $H_R = \rho_{E,n}(\mathcal{H}_R)$  we have the following two facts.
\begin{enumerate}
\item $[\Q(R):\Q] = [G_E(n) : H_R]$;
\item If $\widehat{\Q(R)}$ is the Galois closure of $\Q(R)$ in $\Qbar$ and $N_{G_E(n)}(H_R)$ is the normalizer of $H_R$ in $G_E(n)$, then $\Gal(\widehat{\Q(R)}/\Q)\simeq G_E(n)/N_{G_E(n)}{(H_r)}$.
\end{enumerate}
Practically, if we are given $G_E(n)$ up to conjugation, one can deduce many algebraic properties of $\Q(E[n])$. In particular, since $E[n]$ is a free rank two $\Z/n\Z$-module the $n$-torsion points of $E$ can be (non-canonically) identified with elements of $(\Z/n\Z)^2$ by taking their coordinate vectors with respect to a fixed basis. With this identification, the group $H_R$ is exactly the stabilizer of the coordinate vector of $R$ with respect to the action of $G_E(n)$ on $(\Z/n\Z)^2$. With this we can compute all possible degrees of $\Q(R)/\Q$ where $R$ is a point of exact order $n$, by computing the index of the stabilizer of each element of $(\Z/n\Z)^2$ of order $n$ inside $G_E(n)$. Lastly, as consequence of the Weil pairing we always have that the determinant map $\det\colon G_E(n) \to (\Z/n\Z)^\times$ is surjective and because $\Gal(\Qbar/\Q)$ contains complex conjugation, $G_E(n)$ must contain an element of trace $0$ and determinant $-1$. For more details about this see \cite[Proposition 2.2]{zywina1}

\subsection{Classifications of the Possible Images of Galois Representations} One of the first major results that will be regularly used is the complete classification of elliptic curves with cyclic isogenies. We remind the reader that an elliptic curve $E/K$ has an $n$-{isogeny} if there is a degree $n$ map $\phi\colon E\to E'$ such that $\ker\phi$ is a cyclic subgroup of $E[n]$ of order $n$. If $E/K$ has a cyclic $n$-isogeny, we know that $E[n]$ contains a Galois-stable cyclic subgroup of order $n$, and thus $G_E(n)$ is conjugate to a subgroup of the Borel group of upper triangular matrices in $\GL_2(\Z/n\Z)$. For the sake of concision, whenever $E/\Q$ has a cyclic $n$-isogeny defined over $\Q$, we will simply say that $E$ has a {\em  rational $n$-isogeny}.

\begin{theorem}[Mazur \cite{Mazur1978} and Kenku \cite{kenku2, kenku3, kenku4, kenku5}]\label{isogQ}
Let $E/ \Q$ be an elliptic curve with a rational $n$-isogeny. Then
\begin{equation}\label{isogenies}
n\in \{1,\ldots 19,21,25,27,37,43,67,163\}.
\end{equation}
Further, there are infinitely many $\overline{\Q}$-isomorphism classes of elliptic curves with a rational $n$-isogeny for all $n\in\{1,\ldots,10, 12,13,16,18,25\}$ and only finitely many for all the other $n$ listed in \eqref{isogenies} and if $n\in \{14,19,27,43,67,163\}$ then $E$ has complex multiplication.
\end{theorem}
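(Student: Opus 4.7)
The plan is to reformulate the existence of a rational $n$-isogeny in terms of rational points on the modular curve $X_0(n)$, whose non-cuspidal $\Q$-points parametrize pairs $(E,C)$ with $E/\Q$ an elliptic curve and $C\subset E$ a cyclic $\Q$-rational subgroup of order $n$. The first reduction is to the prime-power case: a cyclic subgroup of order $n=\prod p_i^{a_i}$ decomposes via the CRT as $\bigoplus C_{p_i}$ with $C_{p_i}$ of order $p_i^{a_i}$, and $C$ is $\Gal(\Qbar/\Q)$-stable if and only if each $C_{p_i}$ is. Thus it suffices to list the prime powers $p^a$ for which $X_0(p^a)$ has a non-cuspidal $\Q$-point, and then the admissible $n$'s are exactly the products of compatible prime-power values.

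For primes $p$, the heart of the argument is Mazur's celebrated theorem: $X_0(p)(\Q)$ consists only of cusps unless $p\in\{2,3,5,7,11,13,17,19,37,43,67,163\}$. I would quote this as a black box; its proof uses the Eisenstein ideal in the Hecke algebra, winding quotients, and a delicate study of the Mordell--Weil and Shafarevich--Tate groups of certain abelian variety quotients of $J_0(p)$, and is far out of reach to reprove in passing. Once Mazur's theorem is granted, one treats each listed prime individually: for $p\in\{2,3,5,7,13\}$ the curve $X_0(p)$ has genus $0$ and is $\cong\PP^1_\Q$, yielding infinitely many non-cuspidal $\Q$-points; for $p=11,17,19$ the curve $X_0(p)$ has genus $1$ and one exhibits an explicit Weierstrass model, computes its Mordell--Weil group (each is rank $0$ with small torsion), and identifies the finite list of non-cuspidal $\Q$-points together with the $j$-invariants they produce; for $p\in\{37,43,67,163\}$ one uses Mazur's own finer analysis (or subsequent tabulations) of $X_0(p)(\Q)$.

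For prime powers $p^a$ with $a\geq 2$, one exploits the natural degeneracy map $X_0(p^a)\to X_0(p^{a-1})$: any $\Q$-point of $X_0(p^a)$ projects to one of $X_0(p^{a-1})$, so the admissible $p^a$'s are confined to those lying over admissible $p^{a-1}$'s. Following Kenku, one climbs this tower explicitly. The outcome is that infinitely many rational points occur precisely at genus-zero levels $X_0(p^a)$ for $p^a\in\{2,3,4,5,7,8,9,13,16,25\}$ — one verifies genus $0$ and the existence of a rational point (a cusp suffices) in each case — while $X_0(27)$ has genus $1$ with finitely many non-cuspidal $\Q$-points, and $X_0(p^a)$ has no non-cuspidal $\Q$-points for $p^a\in\{11,17,19,25,32,49,\dots\}$ beyond those already ruled out; for the prime-power levels $X_0(32), X_0(49), X_0(64), X_0(81)$ one invokes the genus-one computations of Kenku. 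Combining the allowed prime-power levels by CRT yields the list \eqref{isogenies}: the compatibility constraints (bounded by Merel-type bounds even though these are not needed here) cut the products down to the stated set, and in particular forbid, e.g., $n=22,26,30,\dots$ because no $E/\Q$ can simultaneously admit a rational $11$-isogeny and a rational $2$-isogeny.

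Finally, for the CM assertion one inspects the finitely many non-cuspidal points on each relevant $X_0(n)$: for $n\in\{14,19,27,43,67,163\}$ these are enumerated explicitly (a small finite list of $j$-invariants), and each such $j$ is then checked against the short list of rational CM $j$-invariants; one verifies, case by case, that every resulting $j(E)$ lies in that list, so $E$ has complex multiplication. The main obstacle by far is Mazur's theorem on $X_0(p)(\Q)$ for primes; I would cite it rather than reprove it. The secondary obstacle is the explicit genus-one Mordell--Weil computations and point-identification on $X_0(11),X_0(14),X_0(15),X_0(17),X_0(19),X_0(21),X_0(27),X_0(37)$ and the higher-genus $X_0(43),X_0(67),X_0(163)$, which are routine but laborious verifications carried out by Kenku in the cited papers.
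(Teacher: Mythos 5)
This statement is quoted in the paper as a classical attributed result (Mazur for prime level, Kenku for the remaining levels); the paper supplies no proof of its own beyond the citations, so there is nothing internal to compare your argument against. Your outline is a faithful reconstruction of the strategy actually used in those references: reduce to $\Q$-points on $X_0(n)$, invoke Mazur's theorem on $X_0(p)(\Q)$ as a black box, climb the prime-power tower via the degeneracy maps, and settle the composite and higher-genus levels by explicit Mordell--Weil computations. Two small cautions. First, your list of prime-power levels with ``no non-cuspidal $\Q$-points'' as written includes $11$, $17$, $19$ and $25$, which contradicts your own (correct) earlier statements that $X_0(11)$, $X_0(17)$, $X_0(19)$ have finitely many non-cuspidal rational points and that $X_0(25)$ is a genus-zero level contributing infinitely many curves; presumably you meant the higher powers $121$, $289$, etc. Second, the CRT decomposition only shows that the admissible $n$ are \emph{among} the products of admissible prime powers; deciding which products actually occur requires analyzing each composite $X_0(n)$ (e.g.\ $X_0(22)$, $X_0(26)$, $X_0(35)$, $X_0(39)$, $X_0(65)$, $X_0(91)$, $X_0(169)$) individually, which is the bulk of Kenku's cited work --- you acknowledge this, but it deserves emphasis as a genuine case-by-case computation rather than a formal consequence of the prime-power classification.
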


The next results that we will need are related to the classification of possible images of Galois representations associated to rational elliptic curves of various levels. The first set of results are contained in \cite{zywina1} where Zywina classifies (among other things) the complete list of possible images of the mod $\ell$ Galois representations associated to rational elliptic curves for all $\ell\leq 13.$ We only need the classification up to $\ell=13$ because as mentioned above $R_\Q(6) = \{ 2,3,5,7,13 \}$. For all of the possible images except three, Zywina gives a complete description of the elliptic curves over $\Q$ whose mod $p$ Galois representation has image conjugate to a subgroup of a given group. Two of the three remaining cases were handled in \cite{curse} by Balakrishnan et al.~ using the Chabauty-Kim method to determine all the rational points on the ``cursed'' genus 3 modular curve.  While a classification for the last remaining image (\texttt{13S4} in Sutherland's notation) is still just a conjecture, since any curve with this image does not have a point of order 13 defined over a sextic extension for our purposes the classification is complete. The second set of results we will use is contained in \cite{RZB} where Rouse and Zureick-Brown give a complete account of the possible 2-adic\footnote{The $p$-adic Galois representations associated to an elliptic curve are constructed by taking the inverse image of the mod $p^n$ Galois representations.} images of Galois representations of elliptic curves without complex multiplication defined over $\Q$. Besides listing each of the possible images they give a complete description of  the associated moduli spaces as well, this includes describing all of the rational points on each modular curves. \\

Finally a specific result about rational isogenies for torsion growth over sextic field.

\begin{lemma}\label{lemIsog}
Let $E/\Q$ be an elliptic curve without complex multiplication, $K/\Q$ a sextic field and $P_p\in E(K)_{\tors}$ a point of odd prime order $p$. Then $E$ has a rational $p$-isogeny, except if $E$ has Cremona label \href{http://www.lmfdb.org/EllipticCurve/Q/2450ba1}{\texttt{2450ba1}} or \href{http://www.lmfdb.org/EllipticCurve/Q/2450bd1}{\texttt{2450bd1}}, and $p=7$, where there is not rational $7$-isogenies. Moreover, in those last cases, the unique sextic fields where the torsion grows are $K=\Q(E[2])$ and $K'=\Q(P_7)$ ($K'/\Q$ is a non-Galois), where $E(K)_{\tors}=(2,2)$ and $E(K')_{\tors}=(7)$ respectively.
\end{lemma}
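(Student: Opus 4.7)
The plan is to analyze the mod-$p$ Galois representation $\rho_{E,p}$ for each odd prime $p\in R_\Q(6)=\{3,5,7,13\}$, using Zywina's classification \cite{zywina1} of possible mod-$\ell$ images for non-CM rational elliptic curves (supplemented by the cursed-curve results of Balakrishnan et al.~\cite{curse} at $p=13$). By the stabilizer description recalled in Section \ref{sec:aux}, if $P_p\in E(K)_{\tors}$ has order $p$ and $[K:\Q]=6$, then the $G_E(p)$-orbit of the coordinate vector of $P_p$ in $\F_p^2\setminus\{0\}$ has cardinality $[\Q(P_p):\Q]$, which must divide $6$. Moreover, $E$ has a rational $p$-isogeny exactly when $G_E(p)$ stabilizes a line in $\F_p^2$, i.e.~when $G_E(p)$ lies in a Borel subgroup. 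So it suffices to check, for each non-Borel subgroup $G$ arising as $G_E(p)$ for some non-CM $E/\Q$, whether $G$ has an orbit on $\F_p^2\setminus\{0\}$ of size dividing $6$.

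I would then proceed prime by prime. For $p=3$ and $p=5$, a short orbit computation over the Cartan normalizers, the exceptional subgroups (e.g.~\texttt{5S4}), and the full group $\GL_2(\F_p)$ shows that every non-Borel image has minimum orbit size at least $4$ (for $p=3$) or at least $8$ (for $p=5$), with no orbit size dividing $6$. For $p=13$, combining Zywina's list with the cursed-curve theorem of \cite{curse} leaves only various subgroups of the two Cartan normalizers as possible non-Borel images for non-CM curves, and the corresponding orbit sizes on $\F_{13}^2\setminus\{0\}$ are all strictly larger than $6$ and do not divide it. Hence for $p\in\{3,5,13\}$, any sextic point of order $p$ forces a rational $p$-isogeny.

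For $p=7$, a single non-Borel image is exceptional: a specific subgroup of the normalizer of the non-split Cartan in $\GL_2(\F_7)$ admits an orbit of size $6$ on $\F_7^2\setminus\{0\}$, producing a point of order $7$ defined over a non-Galois sextic field. Zywina's explicit description of the associated modular curve identifies the finitely many non-CM $\Q$-rational elliptic curves with this image; a direct check (reproducible in Magma via \cite{magmascripts}) matches them with the Cremona labels \texttt{2450ba1} and \texttt{2450bd1}. For each such $E_i$, the sextic field $K'=\Q(P_7)$ is non-Galois over $\Q$ and satisfies $E_i(K')_{\tors}=(7)$. Moreover, since these curves have $\rho_{E_i,2}$ surjective onto $\GL_2(\F_2)\simeq S_3$ and $E_i(\Q)_{\tors}$ contains no $2$-torsion, the $2$-division field $K=\Q(E_i[2])$ is the unique sextic extension of $\Q$ over which the full $2$-torsion becomes rational, giving $E_i(K)_{\tors}=(2,2)$.

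Uniqueness of $K$ and $K'$ as the only sextic fields where the torsion of $E_i$ grows then follows from the first step together with the explicit structure of the $2$- and $7$-division fields: growth at any other odd prime would require a rational $p'$-isogeny for some $p'\in\{3,5,13\}$, which $E_i$ does not possess, while additional $2$- or $7$-power growth is ruled out because points of order $4$ or $49$ are not defined over sextic fields (verifiable by direct computation with $\rho_{E_i,4}$ and $\rho_{E_i,49}$). The main obstacle is the $p=7$ case, where one must correctly isolate the exceptional non-Borel image, match it to the two Cremona labels, and compute the associated fields of definition; the other primes reduce to routine orbit enumeration.
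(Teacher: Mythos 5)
Your proposal follows essentially the same route as the paper's proof: restrict to the odd primes in $R_\Q(6)=\{3,5,7,13\}$, run through the classified mod-$p$ images (the paper's Table~\ref{TABLE} records precisely the orbit sizes $d_v$ and the isogeny degree $d_0$ that your stabilizer argument computes), isolate the unique non-Borel image at $p=7$ with an orbit of size dividing $6$, identify the two curves via Zywina, and verify the $2$- and $7$-division fields directly. One correction: that exceptional image is \texttt{7Ns.2.1}, a subgroup of the normalizer of the \emph{split} Cartan (the non-split Cartan normalizer mod $7$ acts transitively on the $48$ nonzero vectors, so $d_v=48$ and it contributes nothing here); this mislabeling does not affect the rest of your argument, which correctly lands on \texttt{2450ba1} and \texttt{2450bd1} and the fields $\Q(E[2])$ and $\Q(P_7)$.
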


\begin{proof}
We have that $p\in R_\Q(6)=\{2,3,5,7,13\}$.  Looking at the Table \ref{TABLE} we see that, unless $G_E(7)$ is conjugate to \texttt{7Ns.2.1}, is must be that $E$ has a rational $p$-isogeny.  For the case \texttt{7Ns.2.1}, \cite[Theorem 1.5 (iii)]{zywina1}  says that $E$ has Cremona label \href{http://www.lmfdb.org/EllipticCurve/Q/2450ba1}{\texttt{2450ba1}} or \href{http://www.lmfdb.org/EllipticCurve/Q/2450bd1}{\texttt{2450bd1}}. For those two curves we have that they have no rational isogenies  and the unique prime where $\rho_{E,p}$ is non-surjective is $p=7$, then by Table \ref{TABLE} we have that the sextic fields where the torsion grows are $K=\Q(E[2])$ and $K'=\Q(P_7)$ and computing the torsion over that number fields we obtain the desired result.
\end{proof}

\subsection{Elliptic Curves with Complex Multiplication} Let $\Phi^{\cm}(d)$ be the set consisting of the torsion subgroups of elliptic curves with complex multiplication {(or CM for short)} defined over field of degree $d$. Table \ref{tableCM} lists the sets $\Phi^{\cm}(d)$ such that $d\,|\,6$ since these will be that ones that we use in this article. Proofs of the results in Table \ref{tableCM} can be found in \cite{Clark2014,FSWZ90,MSZ89,Olson74,PWZ97}.

\begin{table}[h]
\begin{tabular}{|c|c|}
\hline
$d$& $\Phi^{\cm}(d)$ \\
\hline
  $1$ & $\left\{ (1)\,,\,  (2)\,,\,  (3)\,,\,  (4)\,,\,  (6)\,,\, (2,2)\right\}$\\
   \hline
 $2$ & $\Phi^{\cm}(1)\cup \left\{ (7)\,,\,(10)\,,\,(2,4)\,,\,(2,6)\,,\,(3,3)\,\right\}$\\
   \hline
 $3$ & $\Phi^{\cm}(1)\cup  \left\{\,(9)\,,\,(14)\,\right\}$\\
 \hline
$6$ & $\Phi^{\cm}(2)\cup\Phi^{\cm}(3)\cup
 \left\{ \,(18)\,,\,(19)\,,\,(26)\,,\,(2,14)\,,\,(3,6)\,,\,(3,9)\,,\, (6,6)\,\right\}$\\
 \hline   
\end{tabular}
\caption{$\Phi^{\cm}(d)$, for $d\,|\,6$.}\label{tableCM}
\end{table}

\section{Proof of the Main Auxiliary Results}\label{sec_aux} 
The determination of $\Phi^{\star}_\Q(6)$ and $\Phi^{\star}_\Q(6,G)$ will rest on Proposition \ref{teo} and \ref{teoCM} for the case non-CM and CM, respectively.

\begin{proposition}\label{teo}
Let $E/\Q$ be an elliptic curve without complex multiplication and $K/\Q$ a sextic number field such that $E(\Q)_{\tors}= G$ and $E(K)_{\tors}= H$.
\begin{alphenum}

 \item\label{t1}$11,17$ and $19$ do not divide the order of $H$.

\item\label{t2} Let $G_2$ (resp.~$H_2$) denote the $2$-primary part of $G$ (resp.~$H$) then the only possible $2$-primary torsion growth are given in Table \ref{Tab:2-prim}. For each entry in the table a {\rm ($-$)} indicates that the growth from $G_2$ to $H_2$ cannot happen. If the growth from $G_2$ to $H_2$ is possible, we give the modular curve in the notation of \cite{RZB} that parameterizes elliptic curves with this growth. That is, $E(\Q)_{\tors}$ contains a subgroup isomorphic to $G_2$ and there is a sextic extension $K/\Q$ such that $E(K)_{\tors}$ contains a subgroup isomorphic to $H_2$ if and only if $E$ corresponds to a rational point on the given modular curves. 
\begin{table}
\renewcommand{\arraystretch}{1.2}
\begin{tabular}{|c|c|c|c|c|c|c|c|c|c|}
\hline
\backslashbox{$G_2$}{$H_2$}
 & $(1)$ & $(2)$ & $(4)$ & $(8)$ & $(16)$ & $(2,2)$ & $(2,4)$& $(2,8)$& $(4,4)$\\
\hline
$(1)$ & $X_1$ & $X_1$  & $X_{20}$  & $-$ & $-$ & $X_{1}$  & $-$ & $-$ & $X_{20b}$  \\
\hline
$(2)$ & $-$ & $X_6$ & $X_{13}$ & $X_{102},X_{36a}$ & $X_{235m}$ & $X_6$ & $-$ & $-$ & $-$ \\
\hline
$(4)$ & $-$ & $-$ & $X_{13h}$ & $X_{36n}$ & $-$ & $-$ & $X_{13h}$ & $X_{102k}$ & $X_{60d}$ \\
\hline
$(8)$ & $-$ & $-$ & $-$ & $X_{102p}$ & $X_{235l}$ & $-$ & $-$ & $X_{102p}$ & $-$ \\
\hline
$(2,2)$ & $-$ & $-$ & $-$ & $-$& $-$ & $X_{8}$ & $X_{25},X_{8d}$ & $X_{193},X_{96q},X_{98o}$  & $-$ \\
\hline
$(2,4)$ & $-$ & $-$ & $-$ & $-$ & $-$ & $-$ & $X_{25n}$ & $X_{96t},X_{98e}$ & $X_{58i}$ \\
\hline
$(2,8)$ & $-$ & $-$ & $-$ & $-$ & $-$ & $-$ & $-$ & $X_{193n}$ & $-$ \\
\hline
\end{tabular}\caption{ Classification of the possible growth in 2-primary component over a sextic field. }\label{Tab:2-prim}
\end{table}

\item\label{t5quartic} If $(4)\subseteq G$, then $(20)\not\subseteq H$.
\item\label{t6quartic} If $(8)\subseteq G$, then $(24)\not\subseteq H$.
\item \label{t7quartic} If $(2,2)\subseteq G$, then $(2,10)\not\subseteq H$.
\item\label{t8quartic} If $(2,4)\subseteq G$, then $(2,12)\not\subseteq H$.
\item\label{t24} If $G =(12)$, then $H\ne (24)$.
\item\label{t2x14} If $G=(2,2)$, then $(2,14)\not\subseteq H$.
\item\label{t6x6} If $G = (1)$ and $(3,6)\subseteq H$, then $(6,6)\subseteq H$.
\item\label{t3x12} If $G = (3)$, then $H\ne (3,12)$.
\item\label{t20} $H\ne (20)$.
\item\label{t18} If $G = (3),$ then $H\ne (18)$.
\item\label{t2x18} If $(2,2)\subseteq G$, then $(2,18)\not\subseteq H$.
\item\label{3t2x18} If $G= (3)$, then $H\ne (2,18)$.
\item\label{t21} If $G=(7)$, then $(21)\not\subseteq H$.
\item\label{2t24} If $G =(2), (4)$ or $(6)$, then $H \ne (24)$.
\item\label{t26} $(26)\not\subseteq H$.
\item\label{t27} $H \ne (27)$.
\item\label{t28} $H\ne (28)$.
\item\label{t30} If $G\ne (3)$ or $(5)$, then $H\ne (30)$.

\end{alphenum}
\end{proposition}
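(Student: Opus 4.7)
The proof will proceed item by item, drawing on three recurring inputs: the constraint $R_\Q(6)=\{2,3,5,7,13\}$, the Mazur--Kenku isogeny classification (Theorem \ref{isogQ}) together with Lemma \ref{lemIsog}, and the explicit classifications of possible Galois images --- Zywina's lists of mod-$\ell$ images for $\ell\le 13$ and the Rouse--Zureick-Brown classification of 2-adic images for non-CM rational elliptic curves.

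Part (a) is immediate from the definition of $R_\Q(6)$. For part (b), the plan is to fix each admissible $G_2$ in turn and enumerate the 2-adic images $G_E(2^\infty)$ in the RZB tables whose reduction is compatible with $E(\Q)[2^\infty]\supseteq G_2$. For each such image, one computes which $H_2$ can appear as the 2-primary part of $E(K)_{\tors}$ for $K$ of degree $6$ by looking for subgroups of $G_E(2^\infty)$ of index dividing $6$ that pointwise-fix the chosen $H_2$; the surviving images determine the entries $X_i$ of the table, and the absence of such a subgroup rules out every cell marked $-$.

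Parts (c)--(t) then share a common pattern. If the asserted ``forbidden'' growth requires $E$ to acquire a point of some odd prime order $p\in\{3,5,7,13\}$ over $K$ that is not already in $E(\Q)$, Lemma \ref{lemIsog} forces $E$ to have a rational $p$-isogeny (the two sporadic exceptions with labels \texttt{2450ba1}, \texttt{2450bd1} are dispatched by direct computation). Several parts then collapse immediately by combining isogenies: parts (c) and (k) would produce a rational $20$-isogeny, while (d) produces a rational $24$-isogeny, both excluded by \eqref{isogenies}. Parts (q), (r), (s) are of the same flavor: acquiring a cyclic subgroup of order $26$, $27$, or $28$ that is defined over a sextic field would, by examining the action of Galois on the $\varphi$-many generators, force a rational isogeny of that degree, and each such isogeny either fails \eqref{isogenies} or forces CM, contradicting the non-CM hypothesis. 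The remaining parts (e), (f), (g), (h), (i), (j), (l), (m), (n), (o), (p), (t) are handled by combining the 2-adic constraint from part (b) with a Borel containment of $G_E(p)$ (and possibly $G_E(q)$ for a second odd prime), giving a small fiber-product modular curve $Y$ whose $\Q$-rational points one enumerates explicitly to confirm that the forbidden $H$ never arises.

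The main obstacle will be this last collection of fiber-product arguments together with part (b) itself: although each individual rational-point computation reduces to a finite, mechanical task (stabilizer indices in the 2-adic image, $\Q$-rational points on low-genus modular covers of $X(1)$), the bookkeeping is extensive, and one must verify not only that each listed $X_i$ in Table \ref{Tab:2-prim} realizes the stated growth but also that it is the minimal such cover, so that the table records precisely the rational points corresponding to genuine sextic-but-not-smaller torsion growth. The mixed-prime parts (i), (n), and (t), where the mod-$2$ behavior must be coupled to the mod-$3$ or mod-$5$ behavior, will require the most delicate analysis and cannot be settled purely by the isogeny bound.
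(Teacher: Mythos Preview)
Your overall architecture matches the paper's, but the proposal contains a genuine gap in how you handle parts (k), (q), (r), and (s).

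For (k) you write that it ``would produce a rational $20$-isogeny'' just as (c) does. This is only valid when $(4)\subseteq G$, so that a rational $4$-isogeny is already present and can be combined with the rational $5$-isogeny from Lemma~\ref{lemIsog}. When $G=(1)$, a point of order $4$ acquired over a sextic gives no rational $4$-isogeny, and the paper instead uses the fact (from part (b)) that $E$ must lie on $X_{20}$, computes the fiber product $X_{20}\times_{X(1)}X_0(5)$, and shows its only rational points are cusps or singular. Your sketch does mention fiber products later, but you explicitly list (k) among the cases that ``collapse immediately,'' which is incorrect.

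The more serious problem is your treatment of (q), (r), (s). You claim that a cyclic subgroup of order $26$, $27$, or $28$ in $E(K)$ ``would, by examining the action of Galois on the $\varphi$-many generators, force a rational isogeny of that degree.'' This inference fails because $K$ need not be Galois over $\Q$: an element of $\Gal(\Qbar/\Q)$ may send your point $P$ of order $n$ to a point of order $n$ lying outside $E(K)$ entirely, so $\langle P\rangle$ is not $\Gal(\Qbar/\Q)$-stable and no rational $n$-isogeny is produced. The paper's actual arguments are substantially harder. For (q) it first reduces to the case where $\Q(P_{13})$ is cyclic of degree $6$ (using Table~\ref{TABLE}) and then computes a fiber product $X_0(13)\times_{X(1)}X_{\texttt{2Cn}}$. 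For (s) with $G=(1)$ it computes the fiber product of $X_{20}$ with $X_0(7)$, a genus-$3$ hyperelliptic curve, and finds its rational points via an elliptic quotient. For (r) the situation is the most delicate: the paper enumerates the $687$ admissible subgroups of $\pi^{-1}(B(3))\subseteq\GL_2(\Z/27\Z)$, isolates seven maximal ones among those giving a sextic point of order $27$, eliminates five via the independent $3$- and $9$-isogeny argument (which does lead to CM), and for the remaining two must invoke the determination of rational points on the genus-$2$ and genus-$4$ modular curves with Cummins--Pauli labels $27A^2$ and $27B^4$. None of this is captured by a direct isogeny-degree argument, and your proposal would need to be revised to treat these four parts with the same fiber-product and image-enumeration machinery you reserve for (i), (n), and (t).
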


\begin{proof}

(\ref{t1}) By \cite{GJN16} we have $11,17,19\notin R_{\Q}(6)=\{2,3,5,7,13\}$.\\

(\ref{t2}) As mentioned in Section \ref{sec:aux}, Rouse and Zureick-Brown completely classify all of the possible 2-adic images of Galois representations associated to elliptic curves without CM defined over $\Q$ in \cite{RZB}. For each of the possible images the second author together with Lozano-Robledo computed the degree of the field of definition of the $(2^i,2^{i+j})$ torsion for $i+j \leq 6$ in \cite{GJLR17} and recoded the data in a text file titled \texttt{2primary\_Ss.txt} (cf. \cite{GJLR17}). Using the results of \cite{GJLR17} we write a program that takes as its input a degree $d$ and returns an associative array whose keys are all the possible 2-primary parts of $E(\Q)_{\tors}$ and $E(K)_{\tors}$ of elliptic curves defined over $\Q$ and base-extended to a degree $d$ number field $K$ over $\Q$. The objects associated with each of these curves are the labels of the modular curves (in the notation of \cite{RZB}) that parameterize each of the possible growths in two primary components. This algorithm and its output can be found in \cite{magmascripts} in the file labeled \texttt{RZB\_Search.txt}.\\

(\ref{t5quartic}), (\ref{t6quartic}), (\ref{t7quartic}) and (\ref{t8quartic}): See Remark below Theorem 7 of \cite{GJLR18}.\\

(\ref{t24}) The $2$-divisibility method \cite[Section 3]{JKL13} asserts that if a point $Q$ satisfies $2Q=P$ then $[K(Q):K]\le 4$ (see \cite[Remark 3.2]{JKL13}). In the particular case of $K=\Q$ and $P$ of order $12$, we have that $(24)$ is a subgroup of some group in $\Phi_\Q(d)$ for $d\le 4$. But we know that this can only happen in a quartic extension \cite[Corollary 8.7]{GJN16} of $\Q$ and thus cannot happen over a sextic extension. \\

(\ref{t2x14}) Suppose that $G=(2,2)$ and $(2,14) \subseteq H$. Since $(2,14)$ is not a subgroup of any group in $\Phi_{\Q}(d,(2,2))$ for $d=2,3$ by \cite[Theorem 2]{GJT14} and \cite[Theorem 1.2]{GJNT16} respectively, we have that $E$ gain a point of order $7$ over a sextic field, and not over any number field of degree less than $6$. By Lemma \ref{lemIsog}
we have that, unless $G_E(7)$ is conjugate to \texttt{7Ns.2.1}, $E$ has a rational $7$-isogeny.  In the former case we have $G=(1)$ therefore we can exclude it for the rest of the proof. Now, since $G=(2,2)$, then $E$ is $2$-isogenous (over $\Q$) to two curves $E'$ and $E''$, such that $E$, $E'$, and $E''$ are all non-isomorphic pairwise. Further, there is a rational $4$-isogeny from $E'$ to $E''$ that is necessarily cyclic. Moreover, since $E$ has a rational $7$-isogeny, if follows that $E'$ also has a rational $7$-isogeny, and therefore $E'$ would have a rational $28$-isogeny which is impossible by Theorem \ref{isogQ}.\\

(\ref{t6x6})  Suppose that $G = (1)$ and $(3,6) \subseteq H$. Checking Table \ref{TABLE} we see that the only way that 
we can go from trivial torsion to having full 3-torsion over a sextic 
extension of $\Q$ is for $G_E(3)$ conjugate to $\texttt{3B.1.2}$. Therefore, we can 
pick a basis $P_3,P_3'$ for $E[3]$ such that $[\Q(P_3):\Q]=2$ and 
$[\Q(P_3'):\Q]=3$ and $K = \Q(P_3,P_3')$ is an $S_3$-extension of $\Q$, since \texttt{3B.1.2} is isomorphic to $S_3$. 
Next, in order to pick up 
a point $P_2$ not defined over $\Q$, it must be that 
$[\Q(P_2):\Q] = 3$ where $G_E(2)$ is conjugate to $\texttt{2Cn}$ or $\rho_{E,2}$ is surjective. 
In the first case we have $\Q(P_2)=\Q(E[2])\subseteq K$. In the second 
case $\Q(E[2])$ is the Galois closure of $\Q(P_2)$, that is the $S_3$-extension 
of $\Q$ that contains $\Q(P_2)$. Therefore, $\Q(E[2]) = \Q(E[3])$. In both 
cases we obtain that $(6,6)\subseteq E(K)_{\tors}$. Therefore $H$ cannot equal $(3,6)$.\\

(\ref{t3x12}) If $G =(3)$ and $H = (3,12)$, then we know that $E$ must have a rational $3$-isogeny and from part (\ref{t2}) in order for $E/\Q$ to gain a point of order 4 over a sextic extension $E$ must correspond to a $\Q$-rational point on the modular curve $X_{20}$. Taking the fiber product of $X_0(3)$ and $X_{20}$ we get a singular genus $1$ curve $C$ whose desingularization is the elliptic curve $E'/\Q$ with Cremona label \href{http://www.lmfdb.org/EllipticCurve/Q/48a3}{\texttt{48a3}} and $E'(\Q)=(2,4)$. Inspecting the rational points on $C$ we get that there are $4$ non-singular non-cuspidal points corresponding to the $j$-invariants $109503/64$  and $-35937/4$. For each of these $j$-invariants there is exactly one $\Q$-isomorphism class that has a point of order $3$ defined over $\Q$. Representatives of these classes are the curves with Cremona label \href{http://www.lmfdb.org/EllipticCurve/Q/162a1}{\texttt{162a1}} and \href{http://www.lmfdb.org/EllipticCurve/Q/162d1}{\texttt{162d1}}. Checking the $3$-division fields of each of these curves, we see that neither gains full 3-torsion and a point of order 6 over the same sextic extension of $\Q$. \\

(\ref{t20}) If $(2)\subseteq G$, then $E$ has a rational point of order 2 and the 2-power division fields are all 2-extensions of $\Q$. Therefore, if $H = (20)$ it must be that if $P_4$ is the point of order 4 over $K$, then $\Q(P_4)$ is a quadratic extension of $\Q$. Further, by Table \ref{TABLE}, the only way that $E$ can have a point $P_5$ of order 5 over $K$ is if $\Q(P_5)$ is defined over a quadratic extension of $\Q$. If $K$ is a sextic extension then it must be that $\Q(P_4) \subseteq \Q(P_5)$ and $E$ actually has a point of order 20 defined over a quadratic extension which is impossible. 

Lastly, if $(2)\not\subset G$ and $H = (20)$, then $E$ gains a point $P_4$ of order 4 over a degree 3 or 6 field and from part (\ref{t2}), the only way this can happen is if $E$ corresponds to a rational point on the curve $X_{20}$ from \cite{RZB}. Again by Lemma \ref{lemIsog}, in order to gain a point of order 5 over a sextic extension, $E$ must have a rational 5-isogeny. Computing the fiber product of $X_{20}$ and $X_0(5)$ we get a genus $3$ hyperelliptic  curve $C$ with $\Aut(C)=(2,2)$. The automorphism group of $C$ is generated by the hyperelliptic involution and another automorphism of order two, call it $\phi$. The curve, $E'$ obtained by quotienting out by $\phi$ is the elliptic curve with Cremona label \href{http://www.lmfdb.org/EllipticCurve/Q/80a4}{\texttt{80a4}} which has $E'(\Q)=(4)$. Computing the preimage of the $4$ points on $E'$ we see that $C(\Q)$ consists of exactly $3$ points, two of which are singular and one is a cusp at infinity. Thus, there are no elliptic curves over $\Q$ that gain a point of order $4$ over a sextic and have a rational $5$-isogeny. \\

(\ref{t18}) Suppose towards a contradiction that $G = (3)$ and $H = (18)$ and let $P_2$ and $P_9$ be points in $E(K)$ of order 2 and 9 respectively. Since $E(\Q)[2]$ is trivial it must be that $[\Q(P_2):\Q]= 3$ and from \cite[Proposition 4.6]{GJN16} we have that $[\Q(P_9):\Q]= 2, 3$ or $6$. From \cite{GJT14} we know that $[\Q(P_9):\Q] \neq 2$ and from \cite{GJNT16} we know that $[\Q(P_9):\Q] \neq 3$ since there are no elliptic curves whose torsion grows from $(3)$ to $(18)$ over a cubic. So it must be that $[\Q(P_2):\Q]= 3$, $[\Q(P_9):\Q] =6$ and $K=\Q(P_9)$. So we search in Magma for subgroups of $\GL_2(\Z/9\Z)$ that would correspond to an elliptic curve with a rational point of order 3 and a point of order 9 defined over a degree 6 extension of $\Q$. We get that this can happen in $3$ different ways according to the Galois closure of $\Q(P_9)$. That is, $\Gal(\widehat{K}/\Q)\cong \cC_6,S_3,$ or $\cC_3\times S_3$. In the first two cases, we have that $\widehat{K} = K = \Q(P_9)$ and since $\Q(P_2)\subseteq \Q(P_9)$ we know that the Galois closure of $\Q(P_2)$, which is $\Q(E[2])$, is contained in $K$ and this is a contradiction to the assumption that $H = (18)$, and cyclic. Therefore, it must be that $\Gal(\widehat{K}/\Q) = \cC_3\times S_3$ and we have the following field diagram:
$$\xymatrix@=8pt{
  &   \widehat{K} \\
K\ar@{-}[ur]^3    &  \\
  &  \Q(E[2]) =\Q(P_2,\sqrt{\Delta})\ar@{-}[uu]_3 \\
\Q(P_2)\ar@{-}[uu]^2 \ar@{-}[ur]_2 &  \\
\Q\ar@{-}[u]^3    &  \\
}$$
where $\Delta$ is the discriminant of $E$ and $\Delta$ is not a square since $\Q(E[2]) \not\subseteq K$. But, since there is unique subgroup of $\cC_3\times S_3$ of index 2, we know that there is a unique quadratic extension of $\Q$ inside $\widehat{K}$ and by the above field diagram it must be that $\Q(\sqrt{\Delta})$ is inside of $K$. This means that $K = \Q(P_9) = \Q(P_2,\sqrt{\Delta}) =\Q(E[2])$ which is a Galois extension of $\Q$ giving us a contradiction. \\

(\ref{t2x18}) We have that $(2,18)$ is not a subgroup of any group in  $\Phi_{\Q}(d)$ for $d=2,3$. Therefore we have that there must exist a point $P_3$ of order $3$ such that $[\Q(P_3):\Q]=6$ and there is not other point of order $3$ defined over a number field of degree less than $6$. According to Table \ref{TABLE} this cannot happen.\\

(\ref{3t2x18}) Suppose towards a contradiction that $G = (3)$ and $H = (2,18)$. Let $P_9$ be the point of order 9 defined over $K$. In this case we have that $[\Q(P_9):\Q] = 2, 3$ or $6$ and $[\Q(E[2]):\Q] = 3$ or $6$. We can exclude the case $[\Q(P_9):\Q] = 2$ since $(9)$ is not the subgroup of some group in $\Phi_\Q(2,(3))$ (see \cite[Theorem 2]{GJT14}). From \cite{GJNT16} we know that both of these indices cannot be $3$. Suppose that $[\Q(P_9):\Q] =3$ and $[\Q(E[2]):\Q] = 6$. This means that $\Q(P_9)/\Q$ is a degree 3 subfield of $\Q(E[2])$ and so $E$ must have a point of order 2 defined over $\Q(P_9)$. In this case we would have $\Q(P_9)/\Q$ is a cubic extension where $E(\Q(P_9))_{\tors} = (18)$ but this can not happen from \cite[Theorem 1.2]{GJNT16}. Therefore it must be that $[\Q(P_9):\Q] = 6$ and $K = \Q(P_9)$. Just as in part (\ref{t18}) there are only 3 possible options for $\Gal(\widehat{K}/\Q)$, they are $\cC_6,S_3,$ and $\cC_3\times S_3$. The third case gives the exact same contradiction as in part (\ref{t18}), while if $\Gal(\widehat{K}/\Q) = \cC_6,$ or $S_3$, then $\Q(P_9)/\Q$ is a Galois extension and $\langle P_9 \rangle$ is a cyclic Galois stable subgroup of $E(K)$. Therefore $G_E(9)$ must be conjugate to a subgroup of the Borel subgroup of $\GL_2(\Z/9\Z)$, implying that $\langle P_9\rangle$ must be the kernel of a rational $n$-isogeny and $\Q(P_9)/\Q$ must be a cyclic extension. Therefore, $\Gal(K/\Q) =\cC_6$ and $E$ has a rational 9-isogeny. Since $\Q(E[2])\subseteq\Q(P_9)$ and $\Q(E[2])/\Q$ is Galois, it must be that $\Gal(\Q(E[2])/\Q)\simeq\cC_3$ which can only happen if $E$ has square discriminant. Checking in Magma we see that there are no curves with a rational 9-isogeny and square discriminant. \\

(\ref{t21}) By \cite[Theorem 2]{GJT14} and \cite[Theorem 1.2]{GJNT16} we have that if $E$ gains a point $P_3$ of order 3 over a sextic field then $[\Q(P_3):\Q]=6$ and there is no other point of order $3$ defined over a number field of degree less than $6$. But this is impossible by Table \ref{TABLE}.\\

(\ref{2t24}) Suppose $G = (2), (4)$ or $(6)$. In all of these cases $E$ has a rational point of order $2$ and so the Tate module $T_2(E)$ is a tower of $2$-extensions. Therefore, if $P_8$ is a point of order 8 on $E$ defined over a sextic extension of $\Q$, then it must be that $[\Q(P_8):\Q] = 2$ and thus, by \cite[Lemma 4.6]{Q3},  $E$ has a rational $8$-isogeny. Further, if $E$ gains a point of order 3 over a sextic extension of $\Q$, then by Lemma \ref{lemIsog} $E$ must have a rational 3-isogeny. Therefore in all of these cases, if $H=(24)$ then $E$ must have  a rational 24-isogeny which is impossible by  Theorem \ref{isogQ}. \\

(\ref{t26}) Let be $P_2,P_{13}\in E(K)_{\tors}$ of order 2 and 13 respectively. By Lemma \ref{lemIsog}, $E$ has a rational $13$-isogeny. In the case that $\Q(P_2)=\Q$ we have that there is a rational $26$-isogeny which cannot happen by Theorem \ref{isogQ}. 
Now, if $\Q(P_2)\ne\Q$, then $[\Q(P_{2}):\Q]=3$ and $\Q(P_2)\subseteq \Q(P_{13})$. Note that $\Q(P_2)\ne \Q(P_{13})$, since $(26)$ is not a subgroup of some group in $\Phi_\Q(3)$. 
Therefore $[\Q(P_{13}):\Q]=6$ and Table \ref{TABLE} shows that $G_E(13)$ is conjugate to \texttt{13B.3.4} or \texttt{13B.4.1}. 
In both cases we have that the field $\Q(P_{13})$ is Galois and cyclic of order $6$ (see (2) from Section \ref{sec:aux}). 
If $\rho_{E,2}$ is non-surjective, then the Galois group of $\Q(P_2)$ is isomorphic to $\GL_2(\Z/2 \Z)$ contradicting the fact that $\Q(P_2)\subsetneq \Q(P_{13})$. The remaining case is when $G_E(2)$ is conjugate to \texttt{2Cn}. The fiber product of the genus $0$ modular curves $X_0(13)$ and $X_{\texttt{2Cn}}$ is the elliptic curve with Cremona label \href{http://www.lmfdb.org/EllipticCurve/Q/52a2}{\texttt{52a2}} who has only the affine point $(0,0)$. This point does not correspond to an elliptic curve in $X_0(13)$ or $X_{\texttt{2Cn}}$. Therefore we have proved that $(26)\not\subseteq E(K)_{\tors}$. \\

(\ref{t27}) Suppose that $H = (27)$, then $G = (1), (3)$ or $(9)$. By Lemma \ref{lemIsog}, $E$ have a rational 3-isogeny. Therefore, $G_E(27)$ must conjugate to a subgroup of $\pi^{-1}(B(3))$, where $B(3)$ is the Borel subgroup of $\GL_2(\Z/3\Z)$ and $\pi\colon \GL_2(\Z/27\Z) \to \GL_2(\Z/3\Z)$. Constructing a list of the subgroups of $\pi^{-1}(B(3))$ with surjective determinant, containing an element corresponding to complex conjugation, and  not conjugate to a subgroup of $B(27)$ (since the only curves with a rational 27-isogeny have CM) we find there are $687$ of possible images for $\rho_{E,27}$. Of those $687$ of images, $42$ of them would give rise to an elliptic curve with a point of order $27$ over a sextic. Among those $42$ possibilities there are exactly $7$ maximal elements all of which are conjugate to a subgroup of $B(9)$. Five of these $7$ maximal groups reduce to subgroups of $\GL_2(\Z/3\Z)$ that are conjugate to subgroups of the split Cartan subgroup. {If $E$ were an elliptic curve whose mod $27$ Galois representation is conjugate to a subgroup of one of these $5$ groups, then $E$ would have to have independent rational $3$- and $9$-isogenies. From \cite[Lemma 7]{N15a}, any elliptic curve with independent rational $3$- and $9$-isogenies is isogenous to an elliptic curve with a rational 27-isogeny and from \cite[Table 4]{Lozano} there is only one elliptic curve (up to $\Qbar$-isomorphism) with a rational 27-isogeny. The $\Qbar$-isomorphism class of curves with a 27-isogeny has $j$-invariant equal to $-2^{15}\cdot3\cdot5^3$ and consist of all quadratic twists of the elliptic curve with Cremona Reference \href{http://www.lmfdb.org/EllipticCurve/Q/27a2}{\texttt{27a2}}. Since isogeny classes are invariant under quadratic twist it is sufficient to check \href{http://www.lmfdb.org/EllipticCurve/Q/27/a/}{the isogeny class of} \href{http://www.lmfdb.org/EllipticCurve/Q/27a2}{\texttt{27a2}} in the LMFDB database \cite{lmfdb}} to see the only way that $E$ can have an independent rational 3- and 9-isogeny is if $j(E)=0$ and $E$ thus has CM. Therefore, $G_E(27)$ must be conjugate to a subgroup of one of the remaining two maximal groups. These two groups are 
\[
G_1 =  \left\langle
\begin{pmatrix} 1&0\\0&-1 \end{pmatrix}, \begin{pmatrix} 2&0\\0&1 \end{pmatrix}, \begin{pmatrix} 10&22\\18&10 \end{pmatrix}
 \right\rangle 
\hbox{  and  }
G_2 =  \left\langle
\begin{pmatrix} 1&0\\0&-1 \end{pmatrix}, \begin{pmatrix} 2&0\\0&1 \end{pmatrix}, \begin{pmatrix} 19&4\\9&19 \end{pmatrix}
 \right\rangle.
\]
Working in Magma we see that $G_1\cap \SL_2(\Z/27\Z)$ (resp.~$G_2\cap \SL_2(\Z/27\Z)$) is conjugate to a subgroup of the group with Cummins-Pauli label $27A^2$ (resp.~$27B^4$). The modular curves that correspond to the group $27A^2$ and $27B^4$ are genus 2 and 4 respectively and it was shown in the proof of Proposition 5.18 in \cite{Q3}, that the only $\Q$-rational points on these curves are cusps. Therefore there are no elliptic curves over $\Q$ whose mod $27$ image is contained in either $G_1$ or $G_2$. \\

(\ref{t28})  If $H = (28)$ then $E$ gain a point of order $7$ over a sextic field. Then, by Lemma \ref{lemIsog}, if $E$ has not a rational 7-isogeny then $E(K)_{\tors}=(7)$ or $E(K)_{\tors}=(2,2)$. Therefore we have that $E$ must have a rational $7$-isogeny. Now, first suppose that $(2)\subseteq G$, then $E$ would have a rational $14$-isogeny defined over $\Q$ and from Theorem \ref{isogQ} this can only happen if $E$ has complex multiplication. \\

If $G = (1)$ and $H = (28)$ then $E$ must gain a point of order $4$ over a cubic or a sextic extension of $\Q$ which from part (\ref{t2}) can only happen if $E$ corresponds to a rational point on the genus $0$ curve $X_{20}$ from \cite{RZB}. So it must be that $E$ comes from a point on $X_{20}$ and a point on $X_0(7)$. Computing the fiber product of $X_0(7)$ and $X_{20}$, we get a genus 3 hyperelliptic curve $C$ with $\Aut(C)= (2,2,2)$. Quotienting out by one of the automorphisms of order 2 that is not the hyperelliptic involution we get the elliptic curve $E'$ with Cremona label \href{http://www.lmfdb.org/EllipticCurve/Q/14a4}{\texttt{14a4}} satisfying $E'(\Q)=(6)$. Pulling the 6 points in $E'(\Q)$ back to $C(\Q)$ we see that there are exactly 4 non-cuspidal and non-singular rational points on $C$ corresponding to the $j$-invariants $-3^3\cdot13\cdot479^3/2^{14}$ and $ 3^3\cdot 13/2^2$. 
From \cite[Theorem 1.5]{zywina1}, all of the twists of these curves have the kernel of their rational 7-isogeny defined over a cyclic sextic extension of $\Q$ except for two of them. Further, the only way that $E$ can have the 2-primary component of its torsion grows from trivial to $(4)$ over a cyclic sextic extension of $\Q$ is if it actually grow over a quadratic extension of $\Q$. This is because the point of order 4 would define the kernel of a rational isogeny and hence by \cite[Lemma 4.8]{Q3} the degree of its field of definition would have to divide $\varphi(4) = 2$, but this cannot happen by \cite[Theorem 2]{GJT14}. Therefore, we can rule out these curves since $\Q(P_7)$ can never coincide with $\Q(P_4)$. Next from \cite[Theorem 1.5]{zywina1}, we know that for each of these $j$-invariants there are exactly one twist (up to $\Q$-isomorphism) such that the kernel of their rational 7-isogeny are defined over a cyclic cubic extensions of $\Q$.  The $\Q$-isomorphism classes in question are represented by the elliptic curves with Cremona label \href{http://www.lmfdb.org/EllipticCurve/Q/338b1}{\texttt{338b1}} and \href{http://www.lmfdb.org/EllipticCurve/Q/16562be2}{\texttt{16562be2}} and we eliminate these by checking that the 2-division field and the cubic field where the kernel of the rational 7-isogeny intersect only in $\Q$. \\

Next if $G=(7)$, again E must have a rational 7-isogeny and so in order to gain a point of order 4 it must have $j$-invariant  $-3^3\cdot13\cdot479^3/2^{14}$ and $ 3^3\cdot 13/2^2$, but none of these curves have a rational point of order 7. \\

(\ref{t30}) By Lemma \ref{lemIsog} the only way for a curve to gain a point of order 3 or 5 over a sextic extension is for $E$ to have a rational 3- or 5-isogeny respectively. Therefore, if $G=(2), (6)$ or $(10)$ and $H=(30)$, $E$ must have rational 30-isogeny which is impossible by Theorem \ref{isogQ}. 

If $G =(1)$, then again $E$ must have a rational 15-isogeny and $j(E) \in \{ -5^2/2, -5^2\cdot 241^3/2^3, -5^2\cdot29^3/2^5, 5\cdot211^3/2^{15} \}$ (see \cite[Table 4]{Lozano}). Further,  by \cite[Lemma 4.8]{Q3} we know that the kernel of the rational 15-isogeny is defined over a field of degree dividing $\varphi(15) = 8$. Therefore, if the kernel of the rational 15-isogeny is defined over a sextic field, it must be in fact be defined over a quadratic field. But from \cite[Theorem 2 (c)]{N15a} there are no curves with $G=(1)$ and a point of order 15 over a quadratic field, so this is not possible. Thus the kernel of the rational 15-isogeny cannot be defined over a sextic extension. 

Looking at Table \ref{TABLE} we see that the only way to have a point of order 3 and a point of order 5 defined over a sextic field without having any rational points (since we are in the case where $G = (1)$) is for the point of order 5 to be defined over a quadratic extension of $\Q$ (i.e.~$G_E(5)$ is conjugate to a subgroup of \texttt{5B.4.1}) and the point of order 3 has to be defined over a cubic extension or a sextic extension of $\Q$ (i.e.~$G_E(3)$ is conjugate to \texttt{3B.2.1} or \texttt{3B} respectively). We point out here that \texttt{3B.2.1} is contained inside of \texttt{3B} and in fact the group generated by \texttt{3B.2.1} and $-I$ is equal \texttt{3B}. Therefore the $j$-maps from these modular curves are the same and for each $\Qbar$-isomorphism class corresponding to a point on the modular curve  $X_\texttt{3B} = X_0(3)$ there is a unique twist such that $G_E(3)$ is conjugate to \texttt{3B.2.1}. Constructing the fiber product of these two genus 0 modular curves we get the elliptic curve $E'/\Q$ with Cremona label \href{http://www.lmfdb.org/EllipticCurve/Q/15a3}{\texttt{15a3}} and $E'(\Q)=(2,4)$. The rational points of $E'$ give 4 nonsingular noncuspidal points corresponding to the two j-invariants $5\cdot 7\cdot 11\cdot 43\cdot 421/2^{15}$ and $-5\cdot 29^3/2^5$. Using \cite[Theorem 1.2]{zywina1}, we see that each of these two curves has exactly 1 twist (up to $\Q$-isomorphism) with a point of order 3 defined over a cubic field. The $\Q$-isomorphism classes in question are represented by curves with Cremona label \href{http://www.lmfdb.org/EllipticCurve/Q/50a4}{\texttt{50a4}} and \href{http://www.lmfdb.org/EllipticCurve/Q/450b3}{\texttt{450b3}}. Examining the division fields of these curves we see that the cubic fields where they gain a point of order 3 are disjoint from the 2-division fields. Since these division fields are invariant under twisting, no twist of these curves can gain a point of order 30 over a sextic extension of $\Q$.

Before moving on, we point out the following: if $E/\Q$ is either the elliptic curve with Cremona label \href{http://www.lmfdb.org/EllipticCurve/Q/50a4}{\texttt{50a4}} or \href{http://www.lmfdb.org/EllipticCurve/Q/450b3}{\texttt{450b3}} and $P_{15}$ is the point of order 15 defined over a sextic extension of $\Q$, then from \cite[Theorems 1.2 and 1.4]{zywina1} twisting $E$ by a square free integer $d$ can only affect the degree of the field of definition of $P_{15}$ in the following ways. If $d = -3$, then $E^d$ has a rational point of order 3 and if $d = 5$ then $E^d$ has a rational point of order 5 and in both of these cases $G\neq (1)$. If $d$ is any other square-free integers, the field of definition of the point of order 15 becomes $\Q(P_{15},\sqrt{d})/\Q$ which is a degree 12 extension. Therefore, these are the only two curves with $G = (1)$ and $H=(15)$ up to $\Q$-isomorphism.
\end{proof}

For the case when $E/\Q$ is an elliptic curve with complex multiplication give the following result:

\begin{proposition}\label{teoCM}
Let $E/\Q$ be an elliptic curve with complex multiplication and $K/\Q$ a sextic number field such that $E(\Q)_{\tors}=G$ and $E(K)_{\tors}= H$.
\begin{alphenumCM}
\item \label{CMt1} $11,13, 17$ and $19$ do not divide the order of $H$.
\item\label{CMt2x4} If $G=(1)$ or $(2),$ then $H\ne (2,4)$.
\item\label{CMt2x14} If $G=(2,2)$, then $(2,14)\not\subseteq H$.
\item\label{CMt3x6} If $G = (1)$, then $H\ne (3,6)$.
\item\label{CMt18} If $G = (3),$ then $H\ne (18)$.
\end{alphenumCM}
\end{proposition}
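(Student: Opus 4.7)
The plan is to exploit the fact that elliptic curves with complex multiplication defined over $\Q$ fall into exactly thirteen $\overline{\Q}$-isomorphism classes, one for each of the imaginary quadratic orders of class number one; consequently every CM elliptic curve $E/\Q$ is a quadratic twist (or, when $j=0$ or $j=1728$, a sextic or quartic twist) of a fixed model having one of the thirteen known CM $j$-invariants. Since twisting preserves the $j$-invariant and acts in a controlled way on torsion division fields, the problem reduces to a finite case check indexed by $j$-invariant and twist class. Throughout, the key structural input is the classical fact that the mod-$n$ Galois representation of a CM elliptic curve has image contained in the normalizer of a Cartan subgroup of $\GL_2(\Z/n\Z)$, which imposes strong rigidity on the degrees of division fields.

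For part (\ref{CMt1}), the primes $11, 17, 19$ are excluded immediately from $R_\Q(6) = \{2,3,5,7,13\}$, so nothing further is needed. For $p = 13$, a point of order $13$ defined over a sextic extension on a CM curve $E/\Q$ would force the mod-$13$ image to stabilize a line in $E[13]$ over a subfield of index dividing $6$. Going through the thirteen CM $j$-invariants and using the splitting behaviour of $13$ in the relevant CM order, this stabilizer condition either forces a rational $13$-isogeny, which contradicts Theorem \ref{isogQ} combined with the explicit list of CM curves admitting a rational $13$-isogeny, or is incompatible with the Cartan-normalizer structure of the image.

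For parts (\ref{CMt2x4}) through (\ref{CMt18}) the strategy is uniform: Table \ref{tableCM} already bounds the possible $H$ over sextic fields, so it suffices to rule out the short list of candidate pairs $(G,H)$ compatible with $\Phi^{\cm}(6)$. For each CM $j$-invariant whose twists admit the prescribed rational torsion $G$, we compute the relevant division field --- the $4$-division field in (\ref{CMt2x4}), the $14$-division field in (\ref{CMt2x14}), the $6$-division field in (\ref{CMt3x6}), and the $9$-division field in (\ref{CMt18}) --- and verify that no twist realizes the prescribed $H$ over a sextic. Parts (\ref{CMt3x6}) and (\ref{CMt18}) can be further streamlined by noting that the required torsion growth would force combinations of rational isogenies (independent rational $2$- and $3$-isogenies, respectively a rational $9$-isogeny) that are incompatible with any CM $j$-invariant admitting the corresponding $G$; in particular, the list of CM curves with a rational $9$-isogeny is severely restricted, which handles (\ref{CMt18}) almost immediately after checking that none of the surviving models admit a sextic quadratic twist where the $2$-torsion becomes rational.

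The main obstacle is organising the case analysis so as to handle all twists uniformly, since different twists of the same CM $j$-invariant can exhibit very different torsion behaviour over sextic extensions. However, because Table \ref{tableCM} severely restricts $H$ and the thirteen CM $j$-invariants are listed explicitly, the case analysis terminates quickly once the interplay between the Cartan-normalizer image of the Galois representation and the constraint that $E$ is actually defined over $\Q$ (rather than merely over a sextic) is properly exploited.
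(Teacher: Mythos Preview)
Your enumeration-by-$j$-invariant strategy is different from the paper's and is a plan rather than a proof; moreover, two of your claimed shortcuts do not hold. The paper never enumerates the thirteen $j$-invariants: for (\ref{CMt1}) with $p=13$ it cites Zywina's result that $G_E(13)\in\{\texttt{13Ns},\texttt{13Nn},G^3(13)\}$ for CM curves, whence $[\Q(P_{13}):\Q]\geq 24$ directly; for (\ref{CMt2x14}) it observes that $G=(2,2)$ already forces $j=1728$, hence $G_E(7)=\texttt{7Nn}$ and $[\Q(P_7):\Q]=48$; for (\ref{CMt2x4}) and (\ref{CMt3x6}) it gives short Galois-theoretic arguments on the $2$- and $3$-division fields that reduce to the corresponding non-CM cases; and for (\ref{CMt18}) it simply remarks that the argument in Proposition~\ref{teo}(\ref{t18}) nowhere used the non-CM hypothesis and therefore applies verbatim.

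Your specific gaps: for (\ref{CMt3x6}) you claim that $G=(1)$, $H=(3,6)$ forces independent rational $2$- and $3$-isogenies, but $(3,6)$ requires only a single point of order $2$ over $K$, which for $G=(1)$ is typically defined over a cubic subfield and yields no rational $2$-isogeny at all. For (\ref{CMt18}) you claim that $G=(3)$, $H=(18)$ forces a rational $9$-isogeny; this is precisely what cannot be assumed --- the point of order $9$ may lie over a non-Galois sextic with Galois closure $\cC_3\times S_3$, and excluding that configuration is the entire substance of the argument in Proposition~\ref{teo}(\ref{t18}), which you have not reproduced. Finally, your proposed division-field check over ``all twists'' must contend with the infinite sextic and quartic twist families at $j=0$ and $j=1728$, and you have given no mechanism for handling them uniformly.
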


\begin{proof}
(\ref{CMt1}) We know that $R_\Q(6)=\{2,3,5,7,13\}$, then we only need to prove the statement for $13$. By \cite[\S 1.8]{zywina1} we have that $G_E(13)$ is \texttt{13Ns}, \texttt{13Nn} or $G^3(13)$. Therefore by Theorem 5.6 \cite{GJN16} we have an explicit characterization of the degree $[\Q(P_{13}):\Q]$ where $P_{13}$ is a point of order $13$. In particular the minimum degree of $\Q(P_{13})/\Q$ is $24$.\\

(\ref{CMt2x4}) First suppose towards a contradiction that $G=(1)$ and $H=(2,4)$. From \cite{GJT14, GJNT16} we know that this growth cannot happen over a quadratic or cubic extension of $\Q$. Next since $G = (1)$ we have that $[\Q(E[2]):\Q] =3$ or $6$ and the only way that $[\Q(E[2]):\Q] = 3$ is if the discriminant of $E$ is a square. Looking at the tables of CM elliptic curves over $\Q$ in \cite[Appendix A, \S 3]{SilvermanAdTopics} we check that there is only one $\Qbar$-isomorphism class of CM elliptic curves with an square discriminant, all of the form $y^2 = x^3-r^2x$ with $r\in\Q$. All of these curves have full $2$-torsion over $\Q$ and thus do not have $G=(1)$. So we may assume that $[\Q(E[2]):\Q] = 6$ and thus $\Q(E[2]) = K$ is a Galois extension of $\Q$. Now, from \cite[Lemma 4.6]{Q3} since $K/\Q$ is Galois and $E(K)_{\tors}= (2,4)$ we know that $E$ must have a rational 2-isogeny which can only happen if $E$ has a rational point of order 2 contradicting the assumption that $G=(1)$. 

Next suppose towards a contradiction that $G=(2)$ and $H=(2,4)$. Again, from \cite{GJT14, GJNT16} we know that this growth cannot happen over a quadratic or cubic extension of $\Q$. In this case $[\Q(E[2]):\Q]=2$ and so there must be a point of order 2 defined over a quadratic field that become divisible by 2 in a cubic extension, but this is impossible by \cite[Proposition 4.6]{GJN16}.\\

(\ref{CMt2x14}) Since $G=(2,2)$ we have that $E:y^2=x^3-r^2x$ for some $r\in \Q$ (see \cite[Proposition 1.15]{zywina1}). In particular $j(E)=1728$ and \cite[Proposition 1.14]{zywina1} shows that $G_E(7)$ is conjugate to \texttt{7Nn}. In this case, if $P$ is a point of order $7$, we have $[\Q(P):\Q]=48>6$.\\

(\ref{CMt3x6}) Analogous to the proof of Proposition \ref{teo} (\ref{t6x6}) we need a 
point $P_2\in E[2]$ not defined over $\Q$, no rational points of order 
$3$ and $[\Q(E[3]):\Q]$ divides $6$. By Propositions 1.14, 1.15 and 1.16 
from \cite{zywina1} and \cite[Theorem 5.7]{GJN16} this is only possible
if $\rho_{E,2}$ is non-surjective and $G_E(3)$ is conjugate to \texttt{3B.1.2}. This gives the same 
contradiction as Proposition \ref{teo} (\ref{t6x6}).\\

(\ref{CMt18}) Note that in the proof of the similar statement but without complex multiplication (see Proposition \ref{teo} (\ref{t18})) we have not used the condition that the elliptic curve is without complex multiplication. 
\end{proof}

\begin{theorem}\label{c21}
Let $E/\Q$ be an elliptic curve. Suppose $E(K)_{\tors}=(21)$  over some sextic number field $K$. Then $j(E) \in\{3^3\cdot 5^3/2,-3^2\cdot5^3\cdot 101^3/2^{21},-3^3\cdot 5^3\cdot 382^3/2^7,-3^2\cdot 5^6/2^3\}$. 
\end{theorem}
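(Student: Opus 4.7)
The plan is to reduce the question to the classification of rational $21$-isogenies and then invoke known tables of $j$-invariants. First I will argue that $E$ cannot have complex multiplication: indeed, $(21)\notin \Phi^{\cm}(6)$ by Table \ref{tableCM}, so any CM curve $E/\Q$ base-extended to a sextic field $K$ satisfies $E(K)_{\tors}\neq (21)$. Hence from now on $E$ is a non-CM elliptic curve over $\Q$ with a point of order $7$ and a point of order $3$ defined over $K$.

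Next I apply Lemma \ref{lemIsog} twice, with $p=3$ and $p=7$, to extract rational isogenies. For $p=7$, the lemma gives a rational $7$-isogeny unless $E$ is one of the two exceptional curves \texttt{2450ba1} or \texttt{2450bd1}; but in those exceptional cases the lemma also determines $E(K)_{\tors}$ to be $(2,2)$ or $(7)$ for every sextic $K$, which is incompatible with $E(K)_{\tors}=(21)$. So $E$ admits a rational $7$-isogeny. For $p=3$ the lemma has no exceptional cases, so $E$ also admits a rational $3$-isogeny. Composing these gives a rational cyclic $21$-isogeny of $E$, which is one of the finitely many admissible levels listed in Theorem \ref{isogQ}.

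The final step is to read off the possible $j$-invariants. Up to $\Qbar$-isomorphism there are exactly four elliptic curves over $\Q$ admitting a rational $21$-isogeny, and their $j$-invariants are tabulated (for instance, in \cite[Table 4]{Lozano}) as
\[
\left\{\,\tfrac{3^3\cdot 5^3}{2},\ -\tfrac{3^2\cdot 5^3\cdot 101^3}{2^{21}},\ -\tfrac{3^3\cdot 5^3\cdot 382^3}{2^{7}},\ -\tfrac{3^2\cdot 5^6}{2^{3}}\right\}.
\]
Since $j(E)$ is a birational invariant and $E$ has a rational $21$-isogeny, $j(E)$ must lie in this set, proving the theorem.

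The only place where a small amount of care is needed is in ruling out the exceptional curves from Lemma \ref{lemIsog} (to make sure the gain of a point of order $7$ actually forces a rational $7$-isogeny), and in citing the classification of curves with $21$-isogeny to get exactly four $j$-invariants; everything else is a direct chain of already-proved facts.
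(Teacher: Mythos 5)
Your proposal is correct and follows essentially the same route as the paper: apply Lemma \ref{lemIsog} at $p=3$ and $p=7$ to force a rational $21$-isogeny (disposing of the two exceptional curves \texttt{2450ba1}, \texttt{2450bd1} exactly as the paper does), then read the four $j$-invariants off the classification of $X_0(21)(\Q)$ in \cite[Table 4]{Lozano}. Your explicit check that $(21)\notin\Phi^{\cm}(6)$ is a welcome extra bit of care, since Lemma \ref{lemIsog} is stated only for non-CM curves and the paper's own proof leaves that case implicit.
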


\begin{proof}
By Lemma \ref{lemIsog}, If $E$ has Cremona label \href{http://www.lmfdb.org/EllipticCurve/Q/2450ba1}{\texttt{2450ba1}} or \href{http://www.lmfdb.org/EllipticCurve/Q/2450bd1}{\texttt{2450bd1}} then $E(K)_{\tors}\ne (21)$, otherwise if $E(K)_{\tors}=(21)$ then $E$ has a rational $3-$ and a rational $7-$isogeny. Therefore a rational $21$-isogeny. That is $j(E) \in \{3^3\cdot 5^3/2,-3^2\cdot5^3\cdot 101^3/2^{21},-3^3\cdot 5^3\cdot 382^3/2^7,-3^2\cdot 5^6/2^3\}$ by the classification of the rational points in $X_0(21)$ (see \cite[Table 4]{Lozano}). 
\end{proof}

\begin{theorem}\label{c1530} Let $E/\Q$ be an elliptic curve, $K/\Q$ a sextic number field and $E(K)_{\tors}= H$. Then
\begin{romanenum}
\item If $H=(15)$, then $E$ has Cremona label \href{http://www.lmfdb.org/EllipticCurve/Q/50a3}{\texttt{50a3}}, \href{http://www.lmfdb.org/EllipticCurve/Q/50a4}{\texttt{50a4}}, \href{http://www.lmfdb.org/EllipticCurve/Q/50b1}{\texttt{50b1}}, \href{http://www.lmfdb.org/EllipticCurve/Q/50b2}{\texttt{50b2}}, \href{http://www.lmfdb.org/EllipticCurve/Q/450b4}{\texttt{450b4}}, or \href{http://www.lmfdb.org/EllipticCurve/Q/450b3}{\texttt{450b3}}.
\item If $H=(30)$, then $E$ has Cremona label \href{http://www.lmfdb.org/EllipticCurve/Q/50a3}{\texttt{50a3}}, \href{http://www.lmfdb.org/EllipticCurve/Q/50b1}{\texttt{50b1}}, \href{http://www.lmfdb.org/EllipticCurve/Q/50b2}{\texttt{50b2}}, or \href{http://www.lmfdb.org/EllipticCurve/Q/450b4}{\texttt{450b4}}.
\end{romanenum}
\end{theorem}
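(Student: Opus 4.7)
The plan is to first show that any such $E$ must carry a rational $15$-isogeny, then to enumerate the finitely many resulting twists, and finally to check each candidate in Magma. To start, neither $(15)$ nor $(30)$ lies in $\Phi^{\cm}(d)$ for any $d\mid 6$ (Table \ref{tableCM}), so $E$ is necessarily non-CM. Since $H$ contains points of orders $3$ and $5$, applying Lemma \ref{lemIsog} at $p=3$ and $p=5$ (whose only exceptional cases concern $p=7$) forces $E$ to admit both a rational $3$-isogeny and a rational $5$-isogeny, hence a rational $15$-isogeny. By the classical description of the rational points on $X_0(15)$ recalled in \cite[Table 4]{Lozano}, this already pins down
\[ j(E)\in\{-5^2/2,\ -5^2\cdot 241^3/2^3,\ -5^2\cdot 29^3/2^5,\ 5\cdot 211^3/2^{15}\}. \]

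For each of these four $j$-invariants I would use \cite[Theorems 1.2 and 1.4]{zywina1} to enumerate the $\Q$-isomorphism classes (twists) together with the possible images $G_E(3)$ and $G_E(5)$. The key constraint is that a point of order $3$ and a point of order $5$ must sit in a common sextic extension, i.e.\ both $[\Q(P_3):\Q]$ and $[\Q(P_5):\Q]$ lie in $\{1,2,3,6\}$ and the compositum $\Q(P_3,P_5)$ has degree dividing $6$. Note that $\varphi(15)=8\nmid 6$, so the full kernel of the $15$-isogeny cannot lie in the sextic; only the two subgroups of orders $3$ and $5$ individually must. This leaves a short explicit list of twists, which one cuts down further using the $j$-line parametrizations in \cite{zywina1}.

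With the candidate list in hand, I would for each curve form the sextic field $K=\Q(P_3,P_5)$, compute $E(K)_{\tors}$ in Magma, and retain those curves for which it equals $(15)$; these give the six Cremona labels of (i). Statement (ii) then follows by observing that growth from $(15)$ to $(30)$ requires a point of order $2$ in $K$. For the four curves in (i) with a rational $2$-isogeny structure compatible with $K$ this is automatic and one obtains $(30)$; for \texttt{50a4} and \texttt{450b3} the computation at the end of the proof of Proposition \ref{teo}(\ref{t30}) shows that the $2$-division field meets $K$ only in $\Q$, so those two curves are excluded.

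The main technical obstacle is the twist analysis: for each of the four $j$-invariants one must track, across all quadratic twists, both the mod-$3$ and the mod-$5$ image, and verify that the respective fields of definition $\Q(P_3)$ and $\Q(P_5)$ embed into a common degree-$6$ extension. Without Zywina's explicit twist parametrizations at $p=3$ and $p=5$ this would be unwieldy, but with them the whole step reduces to a finite mechanical enumeration followed by Magma verification on the handful of surviving curves.
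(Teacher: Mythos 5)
Your argument is correct in outline but organized differently from the paper's. The paper splits into cases according to $G=E(\Q)_{\tors}\in\{(1),(3),(5)\}$ (the only possibilities, since $G\subseteq H$ and $G\in\Phi(1)$): the case $G=(1)$ is delegated to the proof of Proposition \ref{teo}~(\ref{t30}), and for $G=(3)$ and $G=(5)$ it reads off from Table \ref{TABLE} that a point of order $5$ in a sextic field must already live in a field of degree $1$ or $2$, then computes the fiber product of the two relevant genus-$0$ modular curves (e.g.\ $X_{\texttt{3B.1.1}}\times X_{\texttt{5B.4.1}}$, resp.\ $X_{\texttt{5B.1.1}}\times X_{\texttt{3B}}$) to list the $\Q$-isomorphism classes directly. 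You instead invoke Lemma \ref{lemIsog} at $p=3$ and $p=5$ to produce a rational $15$-isogeny, use the classification of $X_0(15)(\Q)$ to restrict to four $j$-invariants, and then run a twist analysis via Zywina's parametrizations. Both routes terminate in a finite Magma check; yours trades the fiber-product computations for the classical $X_0(15)$ result, at the cost of putting all the real work into the twist enumeration, whose finiteness rests on the fact (made explicit at the end of the paper's proof of (\ref{t30})) that twisting by squarefree $d$ replaces $\Q(P)$ by $\Q(P,\sqrt{d})$ for $P$ of odd order, so only the handful of $d$ with $\sqrt{d}$ already in the relevant division field keep the degrees within $6$. Your handling of part (ii) matches the paper's: the four curves acquiring the order-$15$ point over a quadratic field pick up a point of order $2$ over a cubic, giving $(30)$ over the compositum, while \texttt{50a4} and \texttt{450b3} are excluded because their $2$-division fields meet the relevant sextic only in $\Q$.

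One parenthetical remark is wrong, though it does not damage the argument: from $\varphi(15)=8\nmid 6$ you conclude that ``the full kernel of the $15$-isogeny cannot lie in the sextic.'' The correct deduction is only that \emph{if} the kernel generator lies in a sextic field then it lies in a subfield of degree dividing $\gcd(8,6)=2$ --- and this actually happens for \texttt{50a3}, \texttt{450b4}, \texttt{50b1}, \texttt{50b2}, whose points of order $15$ are defined over quadratic fields. Since you in any case test all points of order $3$ and $5$ separately rather than only the isogeny kernel, no curves are lost, but the claim as stated should be removed or corrected.
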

\begin{proof}
From the proof of Proposition \ref{teo} (\ref{t30}), we know that the only elliptic curves with $G=(1)$ and $H=(15)$ or $(30)$ are the elliptic curves with \href{http://www.lmfdb.org/EllipticCurve/Q/50a4}{\texttt{50a4}} and \href{http://www.lmfdb.org/EllipticCurve/Q/450b3}{\texttt{450b3}} and they both have $H = (15)$. Therefore, all that remains to classify are the elliptic curve with $G=(3)$ or $G=(5)$ and $H = (15)$ or $H=(30)$. 

Starting with an elliptic curve with $G = (3)$, from Table \ref{TABLE} we get that the only way to gain a point of order 5 over a sextic field is to gain it over a quadratic extension of $\Q$. So first classify all elliptic curves with a point of order 3 over $\Q$ and a rational 5-isogeny defined over a quadratic extension. In order to have a rational point of order 3, it must be that $G_E(3)$ is conjugate to a subgroup of \texttt{3B.1.1} while having a point of order 5 defined over a quadratic extension requires $G_E(5)$ to be conjugate to a subgroup of \texttt{5B.4.1}. Computing the fiber product of the two corresponding genus 0 modular curves we see that there are exactly two curves (up to $\Q$-isomorphism) that simultaneously have these properties and they are ones with Cremona label \href{http://www.lmfdb.org/EllipticCurve/Q/450b4}{\texttt{450b4}} and \href{http://www.lmfdb.org/EllipticCurve/Q/50a3}{\texttt{50a3}}. Because these curves gain a point of order 15 over a quadratic extension and every elliptic curve with trivial 2 torsion gains a point of order 2 over a cubic extension, we know that for each of \href{http://www.lmfdb.org/EllipticCurve/Q/450b4}{\texttt{450b4}} and \href{http://www.lmfdb.org/EllipticCurve/Q/50a3}{\texttt{50a3}} there is a sextic extension of $\Q$ where $H = (15)$ and a sextic extension of $\Q$ where $H = (30)$.

Lastly, we classify the elliptic curves that have $G = (5)$ and a rational 3-isogeny. This time to have a rational point of order 5, $G_E(5)$ must be conjugate to \texttt{5B.1.1} while a rational 3-isogeny requires that $G_E(3)$ is conjugate to a subgroup of $\texttt{3B}$. Computing the fiber product of these two genus 0 modular curves we see that there are exactly two elliptic curves (up to $\Q$-isomorphism) with both these properties simultaneously and they are the curves with Cremona label \href{http://www.lmfdb.org/EllipticCurve/Q/50b1}{\texttt{50b1}} and \href{http://www.lmfdb.org/EllipticCurve/Q/50b2}{\texttt{50b2}}. Again for both \href{http://www.lmfdb.org/EllipticCurve/Q/50b1}{\texttt{50b1}} and \href{http://www.lmfdb.org/EllipticCurve/Q/50b2}{\texttt{50b2}} there is a quadratic extension of $\Q$ where $E$ gains a point of order 3 and so for each of these curves there is a sextic extension of $\Q$ where $H = (15)$ and a sextic extension of $\Q$ where $H = (30)$.
\end{proof}

\begin{remark} The following table shows the sextic fields (or subfields) where the torsion grows to $(15)$ or $(30)$:
\begin{center}
\begin{tabular}{|c|c|c|c|}
\hline
$G$ & \backslashbox{$E$}{$H$} & $(15)$ & $(30)$\\
\hline 
$(3)$ & \href{http://www.lmfdb.org/EllipticCurve/Q/50a3}{\texttt{50a3}} & $\Q(\sqrt{5})$ &  $\Q(\sqrt{5},\alpha)$ \\
\hline
$(1)$ & \href{http://www.lmfdb.org/EllipticCurve/Q/50a4}{\texttt{50a4}} & $\Q(\sqrt[6]{5})$ & $-$ \\
\hline
$(5)$ & \href{http://www.lmfdb.org/EllipticCurve/Q/50b1}{\texttt{50b1}} &  $\Q(\sqrt{5})\,,\, \Q(\sqrt{-15},\beta)$ &  $\Q(\sqrt{5},\alpha)$ \\
\hline
$(5)$ & \href{http://www.lmfdb.org/EllipticCurve/Q/50b2}{\texttt{50b2}} & $\Q(\sqrt{-15})\,,\, \Q(\sqrt[6]{5})$ & $\Q(\sqrt{-15},\alpha)$\\
\hline 
$(3)$ & \href{http://www.lmfdb.org/EllipticCurve/Q/450b4}{\texttt{450b4}} & $\Q(\sqrt{-15})$ &  $\Q(\sqrt{-15},\alpha)$\\
\hline
$(1)$ &  \href{http://www.lmfdb.org/EllipticCurve/Q/450b3}{\texttt{450b3}} & $\Q(\sqrt{-15},\beta)$ & $-$ \\
\hline
\end{tabular}
\end{center}
where $\alpha^3-\alpha^2+2\alpha+2=0$ and $\beta^3-\beta^2-3\beta-3=0$.
\end{remark}

\begin{theorem}\label{families}
There are infinitely many non-isomorphic (over $\overline{\Q}$) elliptic curves $E/\Q$ such that there is a sextic number field $K$ with $E(K)_{\tors}=(2,18)$ (resp.~$(3,9)$, $(3,12)$, $(6,6)$).
\end{theorem}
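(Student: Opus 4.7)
The plan is to exhibit, for each of the four target torsion structures $H$, an explicit infinite family of elliptic curves $E/\Q$ together with a sextic field $K/\Q$ realizing $E(K)_{\tors}=H$. The backbone is to choose $G\in\Phi(1)$ with $G\subseteq H$ so that the modular curve $X_1(G)$ has genus zero and therefore infinitely many $\Q$-rational points giving pairwise non-$\overline{\Q}$-isomorphic elliptic curves. I take $G=(9)$ for both $H=(2,18)$ and $H=(3,9)$, $G=(12)$ for $H=(3,12)$, and $G=(2,6)$ for $H=(6,6)$.

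For $H\in\{(3,9),(3,12),(6,6)\}$ the local step is to show that the $3$-division field $\Q(E[3])$ has degree exactly $6$ over $\Q$. Because $G$ always supplies a $\Q$-rational point $P_3$ of order $3$, the image $G_E(3)$ lies inside the stabilizer of $P_3$ in the Borel of $\GL_2(\F_3)$, a subgroup of order $6$. The orders $1$ and $3$ are excluded by the Weil pairing requirement $\zeta_3\in\Q(E[3])$ (together with the fact that rational full $3$-torsion would produce a torsion subgroup not in $\Phi(1)$). If $|G_E(3)|=2$ then $\Q(E[3])=\Q(\zeta_3)$, which would force $E(\Q(\zeta_3))_{\tors}\supseteq H$; but Najman's description of $\Phi_\Q(2)$ in the introduction shows that $(3,9)$, $(3,12)$ and $(6,6)$ are all absent from $\Phi_\Q(2)$, giving a contradiction. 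Hence $|G_E(3)|=6$ and $K=\Q(E[3])$ is sextic. This yields $E(K)_{\tors}\supseteq H$, and equality follows because any strictly larger group of $\Phi(6)$ containing $H$ would require extra $2$-power torsion over $K$, which is ruled out for a Zariski-dense subset of the family by the linear disjointness of $\Q(E[2])$ and $\Q(E[3])$.

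For $H=(2,18)$ the argument is symmetric with the roles of $2$ and $3$ interchanged: for generic $E/\Q$ with $E(\Q)_{\tors}=(9)$ the $2$-division polynomial is irreducible with non-square discriminant, so $\Gal(\Q(E[2])/\Q)\simeq S_3$ and $K=\Q(E[2])$ is a sextic. Then $E(K)_{\tors}\supseteq(2,18)$, and equality follows because the only groups in $\Phi^\infty(6)$ strictly containing $(2,18)$ would force the $3$-power torsion to grow in $K$, contradicting the Galois-disjointness of $\Q(E[2])$ and $\Q(E[9])$ for generic members of the $(9)$-family.

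The main obstacle is verifying that the ``generic'' configurations above really do persist on a Zariski-dense subset of each modular curve, so that infinitely many $j$-invariants are realized. This should follow from Hilbert irreducibility applied to the covers of $X_1(G)$ that detect when the $3$- or $2$-division field drops below its maximal degree, together with the explicit classification of possible images of $\rho_{E,2}$ and $\rho_{E,3}$ available in Table \ref{TABLE} and Table \ref{Tab:2-prim}. Since the $j$-map is non-constant on each of $X_1(9)$, $X_1(12)$ and $X_1(2,6)$, the constructions produce infinitely many pairwise $\overline{\Q}$-non-isomorphic elliptic curves, as required.
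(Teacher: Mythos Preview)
Your approach is essentially correct and, for $(3,9)$, $(3,12)$, and $(2,18)$, mirrors the paper's: it too uses the Kubert--Tate families $\mathcal{T}^{(9)}_t$ and $\mathcal{T}^{(12)}_t$, adjoins $\Q(E[3])$ (resp.\ $\Q(E[2])$), and checks that this field is sextic. One refinement: your appeals to Hilbert irreducibility and ``generic'' behaviour are unnecessary. The paper argues directly that $[\Q(E[3]):\Q]=6$ (resp.\ $[\Q(E[2]):\Q]=6$) for \emph{every} curve in the family, simply because the alternative would place $(3,9)$, $(3,12)$, or $(2,18)$ inside some group of $\Phi_\Q(d)$ with $d\mid 6$, $d<6$, which is excluded by the known lists. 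Your justification of equality $E(K)_{\tors}=H$ also has a small gap: the claim that a strictly larger group ``would require extra $2$-power torsion'' is not correct as stated (for instance $(9,9)$ or $(3,27)$ contain $(3,9)$ without extra $2$-torsion), and $\Phi(6)$ is not fully known. The paper instead uses that each of the four $H$ is \emph{maximal} in $\Phi^\infty(6)$, so for the cofinitely many $j$-invariants outside $J_\Q(6)$ one gets $E(K)_{\tors}=H$ automatically.

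For $(6,6)$ you take a genuinely different route from the paper. The paper exhibits a special one-parameter family $\mathcal{A}_t$ (discovered independently while studying the coincidence $\Q(E[2])=\Q(E[3])$) for which both division fields are sextic and equal, and takes $K=\Q(\mathcal{A}_t[6])$. Your choice of $G=(2,6)$ and $K=\Q(E[3])$ is arguably simpler and more uniform with the other three cases: since $E$ already has full $2$-torsion over $\Q$, adjoining the $3$-torsion gives $(6,6)$ immediately, and your order-counting argument for $|G_E(3)|=6$ goes through verbatim (indeed $\texttt{3Cs.1.1}$ is excluded since $(6,6)\notin\Phi_\Q(2)$). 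Note however that your stated reason for equality (``linear disjointness of $\Q(E[2])$ and $\Q(E[3])$'') is vacuous here because $\Q(E[2])=\Q$; again, maximality of $(6,6)$ in $\Phi^\infty(6)$ is the clean way to conclude.
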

\begin{proof}
Given $G\in \Phi(1)$ there exist a one-parameter family, called the Kubert--Tate normal form, 
$$
{\mathcal T}^G_{t}:\ y^2+(1-c)xy-by=x^3-bx^2,\qquad \mbox{where $b,c\in\Q(t)$},
$$
such that $G$ is a subgroup of ${\mathcal T}^G_{t}(\Q(t))_{\tors}$ for all but finitely many values of $t\in\Q$ (cf.  \cite[Table 3]{Kubert}). For $G=(9)$ and $G=(12)$ we have
$$
\begin{array}{lclclc}
  G=(9) & : &  c=t^2(t-1) & , &  b=c(t^2-t+1)& \quad t\ne 0,1,\\
  G=(12) & : &  c=(3t^2-3t+1)(t-2t^2)/(t-1)^3 & , & b=c(2t-2t^2-1)/(t-1),& \quad t\ne 0,1,1/2
\end{array}
$$
Thanks to the classification of $\Phi(1)$ we have that for those values of $t\in\Q$ we have ${\mathcal T}^G_{t}(\Q)_{{\tors}}=G$. Moreover, ${\mathcal T}^G_{t}$ has not CM since $G\notin \Phi^{\text{CM}}(1)$. Since ${\mathcal T}^G_{t}$ has a rational point of order $3$, Table \ref{TABLE} tell us that the image of the mod $3$ Galois representation attached to ${\mathcal T}^G_{t}$ is labeled \texttt{3Cs.1.1} or \texttt{3B.1.1}. The former case can not happen since in that case $[\Q({\mathcal T}^G_{t}[3]):\Q]=2$, but $(3,9),(3,12)$  are not subgroups of some group in $\Phi_{\Q}(2)$. For the case \texttt{3B.1.1} we have $[\Q({\mathcal T}^G_{t}[3]:\Q]=6$. Therefore $K=\Q({\mathcal T}^G_{t}[3])$ is a sextic field satisfying $(3,9)$ if $G=(9)$ (resp.~ $(3,12)$ if $G=(12)$) is a subgroup of ${\mathcal T}^G_{t}(K)$. But this happens for infinitely many values of $t$ and since $(3,9)$ (resp.~ $(3,12)$) is a maximal subgroup in $\Phi^\infty(6)$ we have that, in fact, ${\mathcal T}^G_{t}(K)_{\tors}=(3,9)$ (resp $=(3,12)$).

Now, let be $G=(9)$ and $K=\Q({\mathcal T}^G_{t}[2])$. Then $(2,18)$ is a subgroup of ${\mathcal T}^G_{t}(K)$. We have that $[K:\Q]=6$, since otherwise $(2,18)$  is a subgroups of some group in $\Phi_\Q(d)$ for $d$ dividing $6$. That is impossible. Analogous to the cases above, we have that in this case  ${\mathcal T}^G_{t}(K)_{\tors}=(2,18)$.

Finally the case $(6,6)$. Let be the one-parameter family\footnote{This family was computed by the first author and \'Alvaro Lozano-Robledo while working on the question of when can $\Q(E[n]) = \Q(E[m])$. The family $\mathcal{A}_{t}$ is the 1-parameter family of elliptic curves with the property that $\Q(\mathcal{A}_{t}[2]) = \Q(\mathcal{A}_{t}[3])$ and both are sextic extensions of $\Q$.} given by:
$$
\mathcal{A}_{t}\,:\,y^2=x^3-3(a-1)^3(a-9)x-2(a-1)^4(a^2 + 18a - 27),\qquad a=(t^3-1)^2,\qquad t\ne 0,1.
$$
This elliptic curve satisfies $\Q(\mathcal{A}_{t}[2])=\Q(\mathcal{A}_{t}[3])$. In particular, the field $K=\Q(\mathcal{A}_{t}[6])$ is of degree $6$. Then $(6,6)$ is a subgroup of $\mathcal{A}_{t}(K)$. An analogous argument to above proves that $\mathcal{A}_{t}(K)_{\tors}=(6,6)$.

Since the $j$-invariant of $\mathcal T_{t}^G$ and $\mathcal{A}_{t}$ is not constant, this proves that there are infinitely many non $\overline{\Q}$-isomorphic elliptic curve over $\Q$ with torsion structures $H=(2,18)$, $(3,9)$, $(3,12)$ or $(6,6)$ over sextic fields. 
\end{proof} 

\section{Proof of Theorems \ref{main1} and \ref{main2}}
The results in Section \ref{sec_aux} are exactly the results necessary to proof the main theorems of the paper.

\begin{proof}[Proof of Theorem \ref{main1}]
For $d=2,3$ we have that $\Phi_\Q(d)\subseteq \Phi_\Q(6)$ and $\Phi_\Q(d)\subseteq \Phi^\infty(6)$. Therefore $\Phi_\Q(2)\cup \Phi_\Q(3)\subseteq \Phi^\star_\Q(6)$. For $H\in \{(30),(2,18),(3,9),(3,12),(6,6)\}$ we have examples at Table \ref{ex_6} of an elliptic curve $E/\Q$, a sextic number field $K$ such that $E(K)_{\tors} = H$. Now, by definition, $\Phi_\Q^\star(6)\subseteq \Phi^\infty(6)$, so our task to complete the description of $\Phi_\Q^\star(6)$ is to prove that the following torsion structures do no appear for elliptic curve over $\Q$ base change to any sextic number field:
$$
(11),\,(17),\,(19),\,(20),\,(22),\,(24),\,(26),\,(27),\,(28),\,(2,16),\,(2,20),\,(4,8).
$$
Indeed, 
\begin{itemize}
\item $H\ne (11),(17),(19),(22)$ by Proposition \ref{teo} (\ref{t1}) and by Proposition \ref{teoCM} (\ref{CMt1}).
\item $H\ne (20)$ by Proposition \ref{teo}  (\ref{t20}).
\item $H\ne (24)$ by Proposition \ref{teo} (\ref{t2}), (\ref{t6quartic}),  (\ref{t24}) and (\ref{2t24}). 
\item $H\ne (26)$ by Proposition \ref{teo} (\ref{t26}) and by Proposition \ref{teoCM} (\ref{CMt1}).
\item $H\ne (27)$ by Proposition \ref{teo}  (\ref{t27}).
\item $H\ne (28)$ by Proposition \ref{teo} (\ref{t28}).
\item $H\ne (2,16), (4,8)$ by Proposition \ref{teo} (\ref{t2}).
 \item $H\ne (2,20)$ by Proposition \ref{teo} (\ref{t2}), (\ref{t7quartic}) and (\ref{t5quartic}).
\end{itemize}
This concludes the first part of Theorem \ref{main1}, that is, the determination of $\Phi_\Q^\star(6)$. In particular we have obtained
\begin{equation}\label{inf6}
\Phi_\Q^\star(6)=\Phi_\Q(2)\cup\Phi_\Q(3)\cup\{(30),(2,18),(3,9),(3,12),(6,6)\}.
\end{equation}
Now part (i) comes from Theorem \ref{c21} and (ii), (iii) from Theorem \ref{c1530}. It remains to determine $\Phi_\Q^\infty(6)$. Najman \cite{N15a} has proved that $\Phi_\Q^\infty(2)=\Phi_\Q(2)\setminus \{(15)\}$ and $\Phi_\Q^\infty(3)=\Phi_\Q(3)\setminus \{(21)\}$. Then by (\ref{inf6}) and (i-iii) we only need to prove that there are infinitely many non $\Qbar$-isomorphic classes of elliptic curves over $\Q$ such that base change to sextic number field the torsion grows to one of the group in $\{(2,18),(3,9), (3,12), (6,6)\}$. This have been done in Theorem \ref{families}.
\end{proof}

\begin{proof}[Proof of Theorem \ref{main2}]
The groups $H\in\Phi^{\star}_\Q(6)$ that do not appear in some $\Phi^{\star}_\Q(6,G)$ for any $G\in\Phi(1)$, with $G\subseteq H$,  can be ruled out using Propositions \ref{teo} and \ref{teoCM}. In Table \ref{table1} below, for each group $G$ at the top of a column, we indicate what groups $H$ (in each row) may appear, and indicate
\begin{itemize}
\item with (\ref{t1})--(\ref{t30}), which part of Proposition \ref{teo} is used to prove that the pair $(G,H)$ cannot appear in the non-CM case,
\item with (\ref{CMt1})--(\ref{CMt18}), which part of Proposition \ref{teoCM} is used to prove that the pair $(G,H)$ cannot appear in the CM case,
 \item with $-$, if the case is ruled out because $G \not\subseteq H$, 
\item with a $\checkmark$, if the case is possible and, in fact, it occurs. There are a few types of check marks in Table \ref{table1}:
\begin{itemize}  
\item $\checkmark$ (without a subindex) means\footnote{Note that for any positive integer $d$, and any elliptic curve $E/\Q$ with $E(\Q)_{\tors}= G$, there is always an extension $K/\Q$ of degree $d$ such that $E(K)_{\tors}= E(\Q)_{\tors}$ (and, in fact, this is the case for almost all degree $d$ extensions).} that $G=H$.
\item $\checkmark_{\!\!\!d}$ for $d=2$ or $3$ means that the structure $H$ occurs already over a quadratic\footnote{Examples can be found at Table 2 of \cite{GJT14}.} or cubic\footnote{Examples can be found at Table 1 of \cite{GJNT16}.} field respectively. $\checkmark_{\!\!\!2,3}$ means that both cases appear (not necessarily for a single elliptic curve).
\item  $\checkmark_{\!\!\!6}$ means that $H$ can be achieved over a sextic field but not over an intermediate quadratic or cubic field, and we have collected examples of curves and sextic fields in Table \ref{ex_6}.
\end{itemize}
\end{itemize}
\end{proof}

\begin{table}
{\footnotesize
\renewcommand{\arraystretch}{1.2}
\begin{longtable}[h]{|c|c|c|c|c|c|c|c|c|c|c|c|c|c|c|c|}
\hline
\backslashbox{$H$}{$G$}
 & $(1)$ & $(2)$ & $(3)$ & $(4)$ & $(5)$ & $(6)$ & $(7)$ & $(8)$ & $(9)$ & $(10)$ & $(12)$ & $(2,2)$ & $(2,4)$& $(2,6)$& $(2,8)$\\
\hline
\endfirsthead
\hline
\backslashbox{$H$}{$G$} 
& $(1)$ & $(2)$ & $(3)$ & $(4)$ & $(5)$ & $(6)$ & $(7)$ & $(8)$ & $(9)$ & $(10)$ & $(12)$ & $(2,2)$ & $(2,4)$& $(2,6)$& $(2,8)$\\
\hline
\endhead
\endfoot
\endlastfoot
\rowcolor{white} $(1)$ & $\checkmark$ & $-$ & $-$ & $-$ & $-$ & $-$ & $-$ & $-$ & $-$ & $-$ & $-$ & $-$ & $-$ & $-$ & $-$ \\
\hline
\rowcolor{white}  $(2)$ & $\checkmark_{\!\!\!3}$ & $\checkmark$ & $-$ & $-$ & $-$ & $-$ & $-$ & $-$ & $-$ & $-$ & $-$ & $-$ & $-$ & $-$ & $-$ \\
\hline
\rowcolor{white} $(3)$ & $\checkmark_{\!\!\!2,3}$ & $-$ & $\checkmark$ & $-$ & $-$ & $-$ & $-$ & $-$ & $-$ & $-$ & $-$ & $-$ & $-$ & $-$ & $-$ \\
\hline
\rowcolor{white} $(4)$ & $\checkmark_{\!\!\!3}$ & $\checkmark_{\!\!\!2}$ & $-$ & $\checkmark$ & $-$ & $-$ & $-$ & $-$ & $-$ & $-$ & $-$ & $-$ & $-$ & $-$ & $-$\\
\hline
\rowcolor{white} $(5)$ & $\checkmark_{\!\!\!2}$ & $-$ & $-$ & $-$ & $\checkmark$ & $-$ & $-$ & $-$ & $-$ & $-$ & $-$ & $-$ & $-$ & $-$ & $-$ \\
\hline
\rowcolor{white} $(6)$ &  $\checkmark_{\!\!\!3}$ & $\checkmark_{\!\!\!2,3}$ &  $\checkmark_{\!\!\!3}$ & $-$ & $-$ & $\checkmark$ & $-$ & $-$ & $-$ & $-$ & $-$ & $-$ & $-$ & $-$ & $-$\\
\hline
\rowcolor{white} $(7)$ & $\checkmark_{\!\!\!2,3}$ & $-$ & $-$ & $-$ & $-$ & $-$ & $\checkmark$ & $-$ & $-$ & $-$ & $-$& $-$ & $-$ & $-$ & $-$ \\
\hline
\rowcolor{white} $(8)$ &(\ref{t2}) & $\checkmark_{\!\!\!2}$ & $-$ & $\checkmark_{\!\!\!2}$ & $-$ & $-$ & $-$ & $\checkmark$ & $-$ & $-$ & $-$  & $-$ & $-$ & $-$ & $-$ \\
\hline
\rowcolor{white} $(9)$ & $\checkmark_{\!\!\!2}$ & $-$ & $\checkmark_{\!\!\!3}$ & $-$ & $-$ & $-$ & $-$ & $-$ & $\checkmark$ & $-$ & $-$& $-$ & $-$ & $-$ & $-$\\
\hline
\rowcolor{white} $(10)$ & $\checkmark_{\!\!\!6}$ & $\checkmark_{\!\!\!2}$ & $-$ & $-$ & $\checkmark_{\!\!\!3}$ & $-$ & $-$ & $-$ & $-$ & $\checkmark$ & $-$ & $-$ & $-$ & $-$ & $-$ \\
\hline
\rowcolor{light-gray} $(11)$ & (\ref{t1}) & $-$ & $-$ & $-$ & $-$ & $-$ & $-$ & $-$ & $-$ & $-$ & $-$ & $-$ & $-$ & $-$ & $-$ \\\hline
\rowcolor{white} $(12)$ & $\checkmark_{\!\!\!6}$ & $\checkmark_{\!\!\!2}$ & $\checkmark_{\!\!\!3}$ & $\checkmark_{\!\!\!2,3}$ & $-$ & $\checkmark_{\!\!\!2}$ & $-$ & $-$ & $-$ & $-$ & $\checkmark$ & $-$ & $-$ & $-$ & $-$\\
\hline
\rowcolor{white} $(13)$ & $\checkmark_{\!\!\!3}$& $-$ & $-$ & $-$ & $-$ & $-$ & $-$ & $-$ & $-$ & $-$ & $-$ & $-$ & $-$ & $-$ & $-$\\
\hline
\rowcolor{white} $(14)$ & $\checkmark_{\!\!\!6}$ & $\checkmark_{\!\!\!3}$ & $-$ & $-$ & $-$ & $-$ &  $\checkmark_{\!\!\!3}$ & $-$ & $-$ & $-$ & $-$ & $-$ & $-$ & $-$ & $-$\\
\hline
\rowcolor{white} $(15)$ & $\checkmark_{\!\!\!6}$ & $-$ & $\checkmark_{\!\!\!2}$ & $-$ & $\checkmark_{\!\!\!2}$ & $-$ & $-$ & $-$ & $-$ & $-$ & $-$ & $-$ & $-$ & $-$ & $-$\\
\hline
\rowcolor{white} $(16)$ &(\ref{t2}) & $\checkmark_{\!\!\!2}$ & $-$ & (\ref{t2}) & $-$ & $-$ & $-$ & $\checkmark_{\!\!\!2}$ & $-$ & $-$ & $-$  & $-$ & $-$ & $-$ & $-$\\
\hline
\rowcolor{light-gray} $(17)$ & (\ref{t1}) & $-$ & $-$ & $-$ & $-$ & $-$ & $-$ & $-$ & $-$ & $-$ & $-$  & $-$ & $-$ & $-$ & $-$\\
\hline
\rowcolor{white} $(18)$ & $\checkmark_{\!\!\!6}$ & $\checkmark_{\!\!\!6}$ & (\ref{t18}),(\ref{CMt18}) & $-$ & $-$ & $\checkmark_{\!\!\!3}$ & $-$ & $-$ & $\checkmark_{\!\!\!3}$ & $-$ & $-$  & $-$ & $-$ & $-$ & $-$\\
\hline
\rowcolor{light-gray} $(19)$ & (\ref{t1})(\ref{CMt1}) & $-$ & $-$ & $-$ & $-$ & $-$ & $-$ & $-$ & $-$ & $-$  & $-$  & $-$ & $-$ & $-$ & $-$\\
\hline
\rowcolor{light-gray}  $(20)$ & (\ref{t20}) & (\ref{t20}) & $-$ &  (\ref{t20})  & (\ref{t20})& $-$ & $-$ & $-$ & $-$ &  (\ref{t20}) & $-$  & $-$ & $-$ & $-$ & $-$\\
\hline
\rowcolor{white} $(21)$ & $\checkmark_{\!\!\!6}$ & $-$ & $\checkmark_{\!\!\!3}$ & $-$ & $-$ & $-$ & (\ref{t21}) & $-$ & $-$ & $-$ & $-$  & $-$ & $-$ & $-$ & $-$\\
\hline
\rowcolor{light-gray} $(22)$ & (\ref{t1})  & (\ref{t1}) & $-$ & $-$ & $-$ & $-$ & $-$ & $-$ & $-$ & $-$ & $-$  & $-$ & $-$ & $-$ & $-$\\
\hline
\rowcolor{light-gray}  $(24)$ & (\ref{t2}) & (\ref{2t24}) & (\ref{t2}) &(\ref{2t24}) & $-$ & (\ref{2t24}) & $-$ &  (\ref{t6quartic})  & $-$ & $-$ &(\ref{t24}) & $-$ & $-$ & $-$ & $-$\\
\hline
\rowcolor{light-gray}  $(26)$ & (\ref{t26}),(\ref{CMt1}) & (\ref{t26}),(\ref{CMt1})& $-$ & $-$ & $-$ & $-$ & $-$ & $-$ & $-$ & $-$ & $-$  & $-$ & $-$ & $-$ & $-$\\
\hline
\rowcolor{light-gray}  $(27)$ & (\ref{t27}) & $-$ & (\ref{t27}) & $-$ & $-$ & $-$ & $-$ & $-$ & (\ref{t27}) &  $-$ & $-$  & $-$ & $-$ & $-$ & $-$\\
\hline
\rowcolor{light-gray}  $(28)$ &(\ref{t28})&(\ref{t28}) & $-$ & (\ref{t28}) & $-$ & $-$ & (\ref{t28})& $-$ & $-$ & $-$ & $-$  & $-$ & $-$ & $-$ & $-$\\
\hline
\rowcolor{white}  $(30)$ & (\ref{t30}) & (\ref{t30}) &    $\checkmark_{\!\!\!6}$    & $-$ & $\checkmark_{\!\!\!6}$ & (\ref{t30})  & $-$ & $-$ & $-$ & (\ref{t30}) & $-$  & $-$ & $-$ & $-$ & $-$\\
\hline
\rowcolor{white} $(2,2)$ & $\checkmark_{\!\!\!3}$ & $\checkmark_{\!\!\!2}$ & $-$ & $-$ & $-$ & $-$ & $-$ & $-$ & $-$ & $-$ & $-$  & $\checkmark$ & $-$ & $-$ & $-$\\
\hline
\rowcolor{white} $(2,4)$ &(\ref{t2}),(\ref{CMt2x4}) &(\ref{t2}),(\ref{CMt2x4}) & $-$ & $\checkmark_{\!\!\!2}$ & $-$ & $-$ & $-$ & $-$ & $-$ & $-$ & $-$ & $\checkmark_{\!\!\!2}$ & $\checkmark$ & $-$ & $-$\\
\hline
\rowcolor{white} $(2,6)$ & $\checkmark_{\!\!\!6}$  & $\checkmark_{\!\!\!2}$ & $\checkmark_{\!\!\!3}$ & $-$ & $-$ & $\checkmark_{\!\!\!2}$ & $-$ & $-$ & $-$ & $-$ & $-$ & $\checkmark_{\!\!\!2,3}$ & $-$ & $\checkmark$ & $-$\\
\hline
\rowcolor{white} $(2,8)$ & (\ref{t2}) & (\ref{t2}) & $-$ & $\checkmark_{\!\!\!2}$ & $-$ & $-$ & $-$ & $\checkmark_{\!\!\!2}$ & $-$ & $-$ & $-$ &$\checkmark_{\!\!\!2}$ & $\checkmark_{\!\!\!2}$ & $-$ & $\checkmark$\\
\hline
\rowcolor{white} $(2,10)$ & $\checkmark_{\!\!\!6}$ & $\checkmark_{\!\!\!2}$ & $-$ & $-$ & $\checkmark_{\!\!\!6}$ & $-$ & $-$ & $-$ & $-$ & $\checkmark_{\!\!\!2}$ & $-$&  (\ref{t7quartic}) & $-$ & $-$ & $-$ \\
\hline
\rowcolor{white} $(2,12)$ & (\ref{t2}) & (\ref{t2}) & (\ref{t2})& $\checkmark_{\!\!\!2}$ & $-$ &  (\ref{t2}) & $-$ & $-$ & $-$ & $-$ & $\checkmark_{\!\!\!2}$ & $\checkmark_{\!\!\!2}$ & (\ref{t8quartic}) & $\checkmark_{\!\!\!2}$ & $-$ \\
\hline
\rowcolor{white}  $(2,14)$ & $\checkmark_{\!\!\!3}$ & $\checkmark_{\!\!\!6}$& $-$ & $-$ & $-$ & $-$ & $\checkmark_{\!\!\!6}$ & $-$ & $-$ & $-$ & $-$ & (\ref{t2x14}),(\ref{CMt2x14}) & $-$ & $-$ & $-$ \\
\hline
\rowcolor{light-gray}  $(2,16)$ & (\ref{t2}) & (\ref{t2})& $-$ & (\ref{t2}) & $-$ & $-$ & $-$ & (\ref{t2})  & $-$ & $-$ & $-$ &(\ref{t2}) & (\ref{t2}) & $-$ & (\ref{t2}) \\
\hline
\rowcolor{white} $(2,18)$ & $\checkmark_{\!\!\!6}$ & $\checkmark_{\!\!\!6}$ & (\ref{3t2x18}) & $-$ & $-$ & $\checkmark_{\!\!\!6}$ & $-$ & $-$ & $\checkmark_{\!\!\!6}$ & $-$ & $-$ & (\ref{t2x18}) & $-$ & (\ref{t2x18}) & $-$ \\
\hline
\rowcolor{light-gray}  $(2,20)$ & (\ref{t2}) & (\ref{t2}) & $-$ &  (\ref{t5quartic})  & (\ref{t2}) & $-$ & $-$ & $-$ &  $-$&(\ref{t2}) & $-$ & (\ref{t7quartic}) &  (\ref{t5quartic})  & $-$ & $-$ \\
\hline
\rowcolor{white} $(3,3)$ & $\checkmark_{\!\!\!6}$ & $-$ & $\checkmark_{\!\!\!2}$ & $-$ & $-$ & $-$ & $-$ & $-$ & $-$ & $-$ & $-$ & $-$ & $-$ & $-$ & $-$\\
\hline
\rowcolor{white}  $(3,6)$ & (\ref{t6x6}),(\ref{CMt3x6}) & $\checkmark_{\!\!\!6}$ & $\checkmark_{\!\!\!6}$ & $-$ & $-$ & $\checkmark_{\!\!\!2}$ & $-$ & $-$ & $-$ & $-$ & $-$& $-$ & $-$ & $-$ & $-$ \\
\hline
\rowcolor{white} $(3,9)$ &$\checkmark_{\!\!\!6}$  & $-$ & $\checkmark_{\!\!\!6}$ & $-$ & $-$ & $-$ & $-$ & $-$ & $\checkmark_{\!\!\!6}$ & $-$ & $-$& $-$ & $-$ & $-$ & $-$ \\
\hline
\rowcolor{white} $(3,12)$ & (\ref{t6x6}) & $\checkmark_{\!\!\!6}$ & (\ref{t3x12}) & $\checkmark_{\!\!\!6}$ & $-$ & $\checkmark_{\!\!\!6}$ & $-$ & $-$ & $-$ & $-$ & $\checkmark_{\!\!\!6}$ & $-$ & $-$ & $-$ & $-$ \\
\hline
\rowcolor{white} $(4,4)$ & $\checkmark_{\!\!\!6}$ & (\ref{t2}) & $-$ & $\checkmark_{\!\!\!2}$ & $-$ & $-$ & $-$ & $-$ & $-$ & $-$ & $-$ & (\ref{t2}) & $\checkmark_{\!\!\!2}$ & $-$ & $-$ \\
\hline
\rowcolor{light-gray}  $(4,8)$ & (\ref{t2}) & (\ref{t2}) & $-$ & (\ref{t2}) & $-$ & $-$ & $-$ & (\ref{t2}) & $-$ & $-$ & $-$ & (\ref{t2}) &(\ref{t2}) & $-$ & (\ref{t2}) \\
\hline
\rowcolor{white} $(6,6)$ & $\checkmark_{\!\!\!6}$ & $\checkmark_{\!\!\!6}$ & $\checkmark_{\!\!\!6}$ & $-$ & $-$ & $\checkmark_{\!\!\!6}$  & $-$ & $-$ & $-$ & $-$ & $-$ & $\checkmark_{\!\!\!6}$ & $-$ & $\checkmark_{\!\!\!6}$ & $-$ \\
\hline
\caption{The table displays either if the case happens for $G=H$ ($\checkmark$), if it already occurs over a quadratic field ($\checkmark_{\!\!\!2}$) or over a cubic field ($\checkmark_{\!\!\!3}$), if it occurs over a sextic but not a quadratic or cubic ($\checkmark_{\!\!\!6}$), if it is impossible because $G \not\subseteq H$ ($-$) or if it is ruled out by Proposition \ref{teo} ((\ref{t1})-(\ref{t30})) and Proposition \ref{teoCM} ((\ref{CMt1})-(\ref{CMt18}))}\label{table1}
\end{longtable}
}
\end{table}

\section{Sporadic torsion over sextic fields}\label{sporadic}
Let $d$ a positive integer and $H$ a finite abelian group. We say that $H$ is a sporadic torsion group for degree $d$ if there is a number field $K$ of degree $d$ and an elliptic curve $E/K$ such that $H=E(K)_{\tors}$ satisfying $H\notin \Phi^\infty (d)$. Notice that these elliptic curves are in bijection to its $j$-invariants in $J(d)$. To our knowledge only a few examples of sporadic torsion groups are known. Excluding Najman's elliptic curve mentioned in the introduction with $H=(21)$ and $d=3$, the known examples have been found by van Hoeij \cite{hoeij}:
\begin{itemize}
\item $d=5$: $H=(28),(30)$,
\item $d=6$: $H=(25),(37)$.
\end{itemize}
Interestingly, all of these the cases correspond to cyclic groups. In this paper we have dealt with the problem of characterizing $\Phi_\Q^\star(6)=\Phi_\Q(6)\cap \Phi^\infty(6)$ and it is easy to check that the two examples that van Hoeij found for $d=6$, namely $(25)$ and $(37)$, are not subgroups of any group in $\Phi_\Q(6)$. The former case because by Table \ref{TABLE} in order to gain a point $P_5$ of order $5$ in a sextic field we have $[\Q(P_5):\Q]=1,2$. Now if $P_{25}$ is a point of order $25$ such that $5P_{25}=P_5$ then by \cite[Proposition 4.6]{GJN16} we have $[\Q(P_{25}):\Q(P_5)]$ divides $25$ or $20$. Looking at $\Phi_\Q(d)$ for $d=2$ and $3$ we show that it is not possible the case $(25)$. The last case can be eliminated by simply noticing that $37\notin R_\Q(6)$.

On the other hand, in \cite{GJN18} the second author and Najman give two examples of elliptic curves over $\Q$ and a number field $K$ of degree 6 over $\Q$ such that $E(K)_{\tors} = (4,12)$ and thus showing that $(4,12)\in \Phi_\Q(6)$. In particular this is the first known example of a sporadic noncyclic torsion group (over degree $6$). See Remark \ref{spo} for more information about this sporadic torsion.

\section{Computations}
Let $E/\Q$ be an elliptic curve and $K/\Q$ a number field. We say that the torsion of $E$ grows from $\Q$ to $K$ if $E(\Q)_{\tors}\subsetneq E(K)_{\tors}$. Note that if the torsion of $E$  grows from $\Q$ to $K$, then of course the torsion of $E$ also grows from $\Q$ to any extension of $K$. We say that the torsion growth from $\Q$ to $K$ is primitive if $E(K')_{\tors}\subsetneq E(K)_{\tors}$ for any subfield $K'\subsetneq K$. With this definition it is possible to give a more detailed description of how the torsion grows in extensions of $\Q$. Given an elliptic curve $E/\Q$ and a positive integer $d$, we denote by $\mathcal{H}_{\Q}(d,E)$ the set of pairs $(H,K)$ (up to isomorphisms) where $K/\Q$ is an extension of degree dividing $d$, $H=E(K)_{\tors}\ne E(\Q)_{\tors}$ and the torsion growth in $K$ is primitive. Note that we are allowing the possibility of two (or more) of the torsion subgroups $H$ being isomorphic if the corresponding number fields $K$ are not isomorphic. We call the set $\mathcal{H}_{\Q}(d,E)$ the set of torsion configurations of the elliptic curve $E/\Q$. We let $\mathcal{H}_{\Q}(d)$ denote the set of $\mathcal{H}_{\Q}(d,E)$ as $E/\Q$ runs over all elliptic curves. Finally, for any $G\in \Phi(1)$ we define $\mathcal{H}_{\Q}(d,G)$ as the set of sets $\mathcal{H}_{\Q}(d,E)$ where $E/\Q$ runs over all the elliptic curve such that $E(\Q)_{\tors}=G$. 

Note that if we denote the maximum of the cardinality of the sets $S$ when $S\in \mathcal{H}_{\Q}(d)$ by $h_\Q(d)$, then $h_\Q(d)$ gives the maximum number of primitive degree $d$ extensions that can appear.

The sets $\mathcal{H}_{\Q}(d)$ and $\mathcal{H}_{\Q}(d,G)$, for any $G\in\Phi(1)$, have been completely determined for $d=2$ in \cite{GJT15}, for $d=3$ in \cite{GJNT16}, for $d=5$ in \cite{GJ17}, for $d=7$ and for any $d$ not divisible by a prime less than $11$ in \cite{GJN16}. For the case $d=4$, in order to conjecture what $\mathcal{H}_{\Q}(4)$ may be, $\mathcal{H}_\Q(4,E)$ has been computed for all elliptic curves over $\Q$ with conductor less than $350000$. In fact,\footnote{Filip Najman and the second author has developed \cite{GJN18} a (fast) algorithm that takes as input an elliptic curve defined over $\Q$ and an integer $d$ and returns the torsion configurations of degree $d$ for $E$, that is, $\mathcal{H}_\Q(d,E)$. At this moment the algorithm has been run on all elliptic curves Cremona's database and the torsion growth appear in the LMFDB webpage \cite{lmfdb} until degree $7$.} this computations has been enlarged to include elliptic curve with conductor up to $400000$.

In the same vein as the quartic computations we have carried out similar computations for the case when $d=6$. Table \ref{tablagrande} gives all\footnote{Including the elliptic curves \texttt{162d1} and \texttt{1296h1} that give sporadic torsion $(4,12)$ over sextic fields (see Section \ref{sporadic}).} the torsion configurations over sextic fields that we have found. We found $137$ torsion configurations in total and note all of the found configurations occur for an elliptic curve with conductor less than or equal to $10816$, far from $400000$. 

The following table shows what is know about $h_{\Q}(d) $:
$$
\begin{array}{|c|c|c|c|c|c|c|c|}
\hline
d & 2 & 3 & 4 & 5 & 6 & 7 & 2,3,5,7\nmid d \\
\hline
h_{\Q}(d) & 4 & 3 & \ge 9& 1 &\ge 9 & 1 & 0\\
\hline  
\end{array}
$$

\newpage

\section*{Appendix: Images of Mod $p$ Galois representations associated to elliptic curves over $\Q$}
For each possible subgroup $G_E(p)\subseteq \GL_2(\Z/p\Z)$ for $p\in R_\Q(6)=\{2, 3, 5, 7,13\}$, Table \ref{TABLE} lists in the first and second column the corresponding labels in Sutherland and Zywina notations, and the following data: 
\begin{itemize}
\item[$d_0$:] the index of the largest subgroup of $G_E(p)$ that fixes a $\Z/p\Z$-submodule of rank $1$ of $E[p]$; equivalently, the degree of the minimal extension $L/\Q$ over which $E$ admits a $p$-isogeny defined over $L$.
\item[$d_v$:] is the index of the stabilizers of $v\in(\Z/p\Z)^2$, $v\ne (0,0)$, by the action of $G_E(p)$ on $(\Z/p\Z)^2$; equivalently, the degrees of the extension $L/\Q$ over which $E$ has a $L$-rational point of order $p$.
\item[$d$:] is the order of $G_E(p)$; equivalently, the degree of the minimal extension $L/\Q$ for which $E[p]\subseteq E(L)$.
\end{itemize}
Note that Table \ref{TABLE} is partially extracted from Table 3 of \cite{Sutherland2}. The difference is that \cite[Table 3]{Sutherland2} only lists the minimum of $d_v$, which is denoted by $d_1$ therein.

\begin{table}[hb]
\begin{footnotesize}
\begin{tabular}{cc}
\begin{tabular}{@{\hskip -6pt}cllccc}
& \multicolumn{1}{c}{Sutherland}  &  \multicolumn{1}{c}{Zywina}  &$d_0$&$d_v $ & $d$\\\toprule
&\texttt{2Cs} & $G_1$  & 1 & 1 & 1\\ 
&\texttt{2B} &$G_2$ &1 & 1\,,\,2 & 2\\ 
&\texttt{2Cn} & $G_3$ & 3 & 3  & 3\\
 \multicolumn{3}{c}{$\,\,\,\,\,\,\GL_2(\Z/2\Z)$} & 3& 3& 6\\
\midrule
&\texttt{3Cs.1.1} &$H_{1,1}$ & 1& 1\,,\,2 & 2\\
&\texttt{3Cs} & $G_1$ &  1 & 2\,,\,4  & 4 \\
 &\texttt{3B.1.1} & $H_{3,1}$& 1 & 1\,,\,6  &6 \\
&\texttt{3B.1.2} &  $H_{3,2}$& 1 & 2\,,\,3  & 6\\
&\texttt{3Ns} &$G_2$ & 2 & 4  & 8\\
&\texttt{3B} & $G_3$ & 1 & 2\,,\,6  & 12\\
&\texttt{3Nn} & $G_4$ & 4 & 8  & 16\\
 \multicolumn{3}{c}{$\,\,\,\,\,\,\GL_2(\Z/3\Z)$} & 4& 8& 48\\
\midrule
&\texttt{5Cs.1.1} & $H_{1,1}$ & 1 & 1\,,\,4  & 4\\
&\texttt{5Cs.1.3} & $H_{1,2}$& 1 & 2\,,\,4  & 4\\
&\texttt{5Cs.4.1} & $G_1$ & 1 & 2\,,\,4\,,\,8  & 8\\
&\texttt{5Ns.2.1} & $G_3$ & 2 & 8\,,\,16  & 16\\
&\texttt{5Cs} &  $G_2$ & 1 & 4  & 16\\
&\texttt{5B.1.1} & $H_{6,1}$ & 1 & 1\,,\,20  & 20\\
&\texttt{5B.1.2} & $H_{5,1}$ & 1 & 4\,,\,5  & 20\\
&\texttt{5B.1.4} & $H_{6,2}$ & 1 & 2\,,\,20  & 20\\
&\texttt{5B.1.3} & $H_{5,2}$ & 1 & 4\,,\,10 & 20\\
&\texttt{5Ns} & $G_{4}$ & 2 & 8\,,\,16  & 32\\
&\texttt{5B.4.1} & $G_{6}$ & 1 & 2\,,\,20  & 40\\
&\texttt{5B.4.2} & $G_{5}$ & 1 & 4\,,\,10  & 40\\
&\texttt{5Nn} & $G_{7}$ & 6 & 24  & 48\\
&\texttt{5B} & $G_{8}$ & 1 & 4\,,\,20 & 80\\
&\texttt{5S4} & $G_{9}$ & 6 & 24  & 96 \\
  \multicolumn{3}{c}{$\,\,\,\,\,\,\GL_2(\Z/5\Z)$}  & 6& 24& 480\\
\bottomrule

\end{tabular}

&\qquad\quad

\begin{tabular}{@{\hskip -6pt}cllccc}
& \multicolumn{1}{c}{Sutherland}  &  \multicolumn{1}{c}{Zywina}   &$d_0$&$d_v $ & $d$\\\toprule
&\texttt{7Ns.2.1} &  $H_{1,1}$ &   $ 2$ & $ 6\,,\,9\,,\,18$  & 18 \\
&\texttt{7Ns.3.1} & $G_{1}$ &  $ 2$ & $12\,,\,18$ & 36 \\
&\texttt{7B.1.1} & $H_{3,1}$ &  $ 1$ & $ 1\,,\,42$ & 42 \\
&\texttt{7B.1.3} & $H_{4,1}$ &  $1 $ & $ 6\,,\, 7$  & 42 \\
 &\texttt{7B.1.2} & $H_{5,2}$ &  $ 1$ & $ 3\,,\,42$ & 42 \\
&\texttt{7B.1.5} & $H_{5,1}$ &   $ 1$ & $ 6\,,\,21$   & 42 \\
 &\texttt{7B.1.6} & $H_{3,2}$ &   $ 1$ & $ 2\,,\,21$ & 42 \\
 &\texttt{7B.1.4} &$H_{4,2}$ &   $ 1$ & $ 3\,,\,14$   & 42 \\
 &\texttt{7Ns} & $G_{2}$ &  $ 2$ &  $ 12\,,\,36$ & 72 \\
  &\texttt{7B.6.1} & $G_{3}$  &  $1 $ & $ 2\,,\,42$   & 84 \\
  &\texttt{7B.6.3} & $G_{4}$  &  $ 1$ & $ 6\,,\, 14$   & 84 \\
  &\texttt{7B.6.2} & $G_{5}$  &  $ 1$ & $ 6\,,\, 42$   & 84 \\
  &\texttt{7Nn} & $G_{6}$  &  $ 8$ & $ 48$&    96 \\
   &\texttt{7B.2.1} & $H_{7,2}$&   $1 $ & $ 3\,,\,42$   & 126 \\
   &\texttt{7B.2.3} & $H_{7,1}$ &   $ 1$ & $ 6\,,\, 21$   & 126 \\
   &\texttt{7B} & $G_{7}$ &  $1 $ & $ 6\,,\, 42$ & 252 \\
   \multicolumn{3}{c}{$\,\,\,\,\,\,\GL_2(\Z/7\Z)$  }&8& 48& 2016\\

\midrule
&\texttt{13S4} &$G_{7}$ &  $ 6$ & $72\,,\,96$ & 288 \\
 &\texttt{13B.3.1} &$H_{5,1}$ & $1 $ &  $3 \,,\, 156$ & 468 \\
 &\texttt{13B.3.2} & $H_{4,1}$ &  $1 $ & $ 12\,,\, 39$ & 468 \\
 &\texttt{13B.3.4} & $H_{5,2}$ &  $ 1$ & $ 6\,,\,156$ & 468 \\
 &\texttt{13B.3.7} & $H_{4,2}$ &  $ 1$ & $ 12 \,,\, 78$ & 468 \\
  &\texttt{13B.5.1} & $G_{2}$ &  $ 1$ & $ 4 \,,\, 156$ & 624 \\
  &\texttt{13B.5.2} & $G_{1}$ &  $ 1$ & $ 12 \,,\, 52$ & 624\\
  &\texttt{13B.5.4} & $G_{3}$ &  $ 1$ & $ 12 \,,\, 156$ & 624 \\
  &\texttt{13B.4.1} & $G_{5}$ &  $ 1$ & $ 6 \,,\, 156$  & 936 \\
  &\texttt{13B.4.2} & $G_{4}$ &  $ 1$ & $ 12 \,,\, 78$   & 936 \\
  &\texttt{13B} & $G_{6}$ &  $ 1$ & $ 12 \,,\, 156$  & 1872 \\
     \multicolumn{3}{c}{$\,\,\,\,\,\,\GL_2(\Z/13\Z)$}  &14& 168 & 26208 \\
 \bottomrule
\end{tabular}
\bigskip
\smallskip
\end{tabular}
\end{footnotesize}
\caption{Image groups $G_E(p)=\rho_{E,p}(\Gal(\Qbar/\Q))$, for $p\le 13$, for non-CM elliptic curves $E/\Q$.}
\label{TABLE}
\end{table}

\newpage

\begin{table}[h]
\caption{\small Examples of elliptic curves such that $G\in\Phi(1)$ and $H\in\Phi_\Q(6,G)$ but $H\notin \Phi_\Q(d,G)$, $d=2,3$}\label{ex_6}
\begin{tabular}{|c|c|c|c|}
\hline
$G$ & $H$ & Sextic $K$ & Label of $E/\Q$ \\
\hline
\multirow{16}{*}{$(1)$} & $( 10 )$ & $x^6 + 2x^5 + 2x - 1$ & \href{http://www.lmfdb.org/EllipticCurve/Q/50a4}{\texttt{50a4}}\\ 
  & $( 12 )$ & $x^6 - 3x^4 + 2x^3 + 9x^2 - 12x + 4$ & \href{http://www.lmfdb.org/EllipticCurve/Q/162a2}{\texttt{162a2}}\\ 
  & $( 14 )$ & $x^6 + 2x^5 + 8x^4 + 8x^3 + 14x^2 + 4x + 4$ & \href{http://www.lmfdb.org/EllipticCurve/Q/208d1}{\texttt{208d1}}\\ 
  & $( 15 )$ & $x^6 - 5$ & \href{http://www.lmfdb.org/EllipticCurve/Q/50a4}{\texttt{50a4}}\\ 
  & $( 18 )$ & $x^6 - 3x^5 + 3x^3 + 6x^2 - 9x + 3$ & \href{http://www.lmfdb.org/EllipticCurve/Q/54a2}{\texttt{54a2}}\\ 
  & $( 21 )$ & $x^6 + x^3 + 1$ & \href{http://www.lmfdb.org/EllipticCurve/Q/162b2}{\texttt{162b2}}\\ 
  & $( 2, 6 )$ & $x^6 - 3x^5 + 5x^3 - 3x + 1$ & \href{http://www.lmfdb.org/EllipticCurve/Q/27a2}{\texttt{27a2}}\\ 
  & $( 2, 10 )$ & $x^6 - x^5 + 2x^4 - 3x^3 + 2x^2 - x + 1$ & \href{http://www.lmfdb.org/EllipticCurve/Q/121d1}{\texttt{121d1}}\\ 
 & $( 2, 18 )$ & $x^6 - 3x^2 + 6$& \href{http://www.lmfdb.org/EllipticCurve/Q/1728e3}{\texttt{1728e3}}\\ 
 & $( 3, 3 )$ & $x^6 + 3x^5 + 4x^4 + 3x^3 - 5x^2 - 6x + 12$ & \href{http://www.lmfdb.org/EllipticCurve/Q/19a2}{\texttt{19a2}}\\ 
 & $( 3, 9 )$ & $x^6 + 3$ & \href{http://www.lmfdb.org/EllipticCurve/Q/54a2}{\texttt{54a2}}\\ 
 & $( 4, 4 )$ & $x^6 - 2x^3 + 9x^2 + 6x + 2$ & \href{http://www.lmfdb.org/EllipticCurve/Q/162d2}{\texttt{162d2}}\\ 
 & \cellcolor{light-gray} $(4,12)$ &\cellcolor{light-gray}  $x^6 + 2x^3 + 9x^2 - 6x + 2$ &\cellcolor{light-gray}  \texttt{1296h1}\\ 
 & $( 6, 6 )$ & $x^6 + 3x^5 - 5x^3 + 3x + 1$ & \href{http://www.lmfdb.org/EllipticCurve/Q/108a2}{\texttt{108a2}}\\ 
\hline
\multirow{6}{*}{$(2)$}    & $( 18 )$ & $x^6 + 4x^3 + 7$ & \href{http://www.lmfdb.org/EllipticCurve/Q/14a3}{\texttt{14a3}}\\ 
  & $( 2, 14 )$ & $x^6 - x^5 + x^4 - x^3 + x^2 - x + 1$ & \href{http://www.lmfdb.org/EllipticCurve/Q/49a1}{\texttt{49a1}}\\ 
 & $( 2, 18 )$ & $x^6 - x^5 + x^4 - x^3 + x^2 - x + 1$ & \href{http://www.lmfdb.org/EllipticCurve/Q/98a1}{\texttt{98a1}}\\ 
  & $( 3, 6 )$ & $x^6 + 3x^5 - 3x^4 - 4x^3 + 69x^2 + 201x + 181$ & \href{http://www.lmfdb.org/EllipticCurve/Q/14a3}{\texttt{14a3}}\\ 
  & $( 3, 12 ) $& $x^6 + 3 $& \href{http://www.lmfdb.org/EllipticCurve/Q/30a3}{\texttt{30a3}}\\ 
  & $( 6, 6 )$ & $x^6 + 3x^5 - 5x^3 + 3x + 1$ & \href{http://www.lmfdb.org/EllipticCurve/Q/36a3}{\texttt{36a3}}\\ 
 \hline
\multirow{6}{*}{$(3)$} & $( 30 )$ & $x^6 - 2x^5 - 2x - 1$ & \href{http://www.lmfdb.org/EllipticCurve/Q/50a3}{\texttt{50a3}}\\ 
 & $( 3, 6 )$ & $x^6 + x^5 + 8x^4 + 5x^3 + 10x^2 + 3x + 3$ & \href{http://www.lmfdb.org/EllipticCurve/Q/19a1}{\texttt{19a1}}\\ 
 & $( 3, 9 )$ & $x^6 - x^3 + 1$ & \href{http://www.lmfdb.org/EllipticCurve/Q/27a1}{\texttt{27a1}}\\ 
 &\cellcolor{light-gray} $( 4, 12 )$ & \cellcolor{light-gray}$x^6 + 9x^4 - 12x^2 + 4$ &\cellcolor{light-gray}  \href{http://www.lmfdb.org/EllipticCurve/Q/162d1}{\texttt{162d1}}\\ 
 & $( 6, 6 )$ & $x^6 - 3x^5 + 6x^4 - 9x^3 + 12x^2 - 9x + 3$ & \href{http://www.lmfdb.org/EllipticCurve/Q/27a1}{\texttt{27a1}}\\ 
\hline
 $( 4 )$ &$ ( 3, 12 )$ & $x^6-3x^5+4x^4-3x^3-2x^2+3x+3$ & \href{http://www.lmfdb.org/EllipticCurve/Q/90c1}{\texttt{90c1}}\\ 
\hline
\multirow{2}{*}{$(5)$} & $( 30 )$ & $x^6 - 2x^5 - 2x - 1$ & \href{http://www.lmfdb.org/EllipticCurve/Q/50b1}{\texttt{50b1}}\\ 
 & $( 2, 10 )$ & $x^6 + x^5 + 2x^4 + 3x^3 + 2x^2 + x + 1$ & \href{http://www.lmfdb.org/EllipticCurve/Q/11a1}{\texttt{11a1}}\\ 
\hline
\multirow{3}{*}{$(6)$} & $( 2, 18 )$ & $x^6 - x^5 + x^4 - x^3 + x^2 - x + 1$ & \href{http://www.lmfdb.org/EllipticCurve/Q/14a4}{\texttt{14a4}}\\ 
 & $( 3, 12 )$ & $x^6 - 3x^5 + 4x^4 - 3x^3 - 2x^2 + 3x + 3$ &\href{http://www.lmfdb.org/EllipticCurve/Q/30a1}{\texttt{30a1}}\\ 
 & $( 6, 6 )$ & $x^6 - 3x^5 + 6x^4 - 9x^3 + 12x^2 - 9x + 3$ & \href{http://www.lmfdb.org/EllipticCurve/Q/36a1}{\texttt{36a1}}\\ 
\hline
$ ( 7 )$ & $( 2, 14 )$ & $x^6 - 2x^5 + 5x^4 + 4x^3 + 22x^2 + 16x + 16$ & \href{http://www.lmfdb.org/EllipticCurve/Q/26b1}{\texttt{26b1}}\\ 
\hline
\multirow{2}{*}{$(9)$} & $( 2, 18 )$ & $x^6 - 3x^2 + 6$ & \href{http://www.lmfdb.org/EllipticCurve/Q/54b3}{\texttt{54b3}}\\ 
 & $( 3, 9 )$ & $x^6 + 3$ & \href{http://www.lmfdb.org/EllipticCurve/Q/54b3}{\texttt{54b3}}\\ 
\hline
$ ( 12 ) $& $( 3, 12 )$ & $x^6 + 3$ & \href{http://www.lmfdb.org/EllipticCurve/Q/90c3}{\texttt{90c3}}\\ 
\hline
$ ( 2, 2 )$ & $( 6, 6 )$ & $x^6 + 3$ & \href{http://www.lmfdb.org/EllipticCurve/Q/30a6}{\texttt{30a6}}\\ 
\hline
$ ( 2, 6 )$ & $( 6, 6 )$ & $x^6 + 3x^5 + 4x^4 + 3x^3 - 2x^2 - 3x + 3$ & \href{http://www.lmfdb.org/EllipticCurve/Q/30a2}{\texttt{30a2}}\\
 \hline
\end{tabular}
\end{table}

\newpage

{\footnotesize
\scriptsize
\renewcommand{\arraystretch}{1.2}
\begin{table}[ht]
\caption{Torsion configurations over sextic fields}\label{tablagrande}
\begin{tabular}{|c|l|c|}
\hline
$G$ & $\mathcal{H}_\Q(6,E)$ & Label\\
\hline
\multirow{54}{*}{$(1)$} & $(2,2)$ &  \texttt{392b1}\\  \cline{2-3}  
 & $ (2,14)$ &  \href{http://www.lmfdb.org/EllipticCurve/Q/1922c1}{\texttt{1922c1}}\\  \cline{2-3}  
 & $ (2),(2,2)$ &  \href{http://www.lmfdb.org/EllipticCurve/Q/11a2}{\texttt{11a2}}\\  \cline{2-3}  
 & $ (2,2),(2,14)$ &  \href{http://www.lmfdb.org/EllipticCurve/Q/1922e1}{\texttt{1922e1}}\\  \cline{2-3}  
 & $ (4),(4,4)$ &  \href{http://www.lmfdb.org/EllipticCurve/Q/648a1}{\texttt{648a1}}\\  \cline{2-3}  
 & $ (7),(2,2)$ &  \href{http://www.lmfdb.org/EllipticCurve/Q/1922c2}{\texttt{1922c2}}\\  \cline{2-3}  
 & $ (2),(5),(2,10)$ &  \href{http://www.lmfdb.org/EllipticCurve/Q/121d1}{\texttt{121d1}}\\  \cline{2-3}  
 & $ (2),(7),(2,2)$ &  \href{http://www.lmfdb.org/EllipticCurve/Q/26b2}{\texttt{26b2}}\\  \cline{2-3}  
 & $ (2),(7),(2,14)$ &  \href{http://www.lmfdb.org/EllipticCurve/Q/10816bk1}{\texttt{10816bk1}}\\  \cline{2-3}  
 & $ (2),(13),(2,2)$ &  \href{http://www.lmfdb.org/EllipticCurve/Q/147c1}{\texttt{147c1}}\\  \cline{2-3}  
 & $ (3),(6),(6,6)$ &  \href{http://www.lmfdb.org/EllipticCurve/Q/108a2}{\texttt{108a2}}\\  \cline{2-3}  
 & $ (4),(7),(4,4)$ &  \href{http://www.lmfdb.org/EllipticCurve/Q/338b1}{\texttt{338b1}}\\  \cline{2-3}  
 & $ (2),(3)^2,(2,6)$ &  \href{http://www.lmfdb.org/EllipticCurve/Q/484a1}{\texttt{484a1}}\\  \cline{2-3}  
 & $ (2),(3),(9),(2,18)$ &  \href{http://www.lmfdb.org/EllipticCurve/Q/1728e3}{\texttt{1728e3}}\\  \cline{2-3}  
 & $ (2),(4)^2,(2,2)$ &  \href{http://www.lmfdb.org/EllipticCurve/Q/648c1}{\texttt{648c1}}\\  \cline{2-3}  
 & $ (2),(5),(10),(2,2)$ &  \href{http://www.lmfdb.org/EllipticCurve/Q/75a2}{\texttt{75a2}}\\  \cline{2-3}  
 & $ (2),(7),(14),(2,2)$ &  \href{http://www.lmfdb.org/EllipticCurve/Q/208d1}{\texttt{208d1}}\\  \cline{2-3}  
 & $ (3)^2,(2,2),(2,6)$ &  \href{http://www.lmfdb.org/EllipticCurve/Q/196a1}{\texttt{196a1}}\\  \cline{2-3}  
 & \cellcolor{light-gray}   $ (3)^2,(4),(4,12)$ & \cellcolor{light-gray}  \href{http://www.lmfdb.org/EllipticCurve/Q/1296h1}{\texttt{1296h1}}\\  \cline{2-3}  
 & $ (2),(3)^2,(2,6),(3,3)$ &  \href{http://www.lmfdb.org/EllipticCurve/Q/225b2}{\texttt{225b2}}\\  \cline{2-3}  
 & $ (2),(3)^2,(6),(2,2)$ &  \href{http://www.lmfdb.org/EllipticCurve/Q/50b3}{\texttt{50b3}}\\  \cline{2-3}  
 & $ (2),(3)^2,(6),(2,6)$ &  \href{http://www.lmfdb.org/EllipticCurve/Q/361b2}{\texttt{361b2}}\\  \cline{2-3}  
 & $ (2),(3)^2,(7),(2,6)$ &  \href{http://www.lmfdb.org/EllipticCurve/Q/5184bd1}{\texttt{5184bd1}}\\  \cline{2-3}  
 & $ (2),(3)^2,(9),(2,6)$ &  \href{http://www.lmfdb.org/EllipticCurve/Q/361b1}{\texttt{361b1}}\\  \cline{2-3}  
 & $ (2),(3)^2,(21),(2,6)$ &  \href{http://www.lmfdb.org/EllipticCurve/Q/5184b1}{\texttt{5184u1}}\\  \cline{2-3}  
 & $ (2),(3),(6)^2,(2,2)$ &  \href{http://www.lmfdb.org/EllipticCurve/Q/300a1}{\texttt{300a1}}\\  \cline{2-3}  
 & $ (2),(3),(9),(18),(2,2)$ &  \href{http://www.lmfdb.org/EllipticCurve/Q/432e3}{\texttt{432e3}}\\  \cline{2-3}  
 & $ (2),(4)^2,(7),(2,2)$ &  \href{http://www.lmfdb.org/EllipticCurve/Q/338a1}{\texttt{338a1}}\\  \cline{2-3}  
 & $ (3)^2,(2,2),(2,6),(3,3)$ &  \href{http://www.lmfdb.org/EllipticCurve/Q/196b2}{\texttt{196b2}}\\  \cline{2-3}  
 & $ (3)^2,(4),(12),(4,4)$ &  \href{http://www.lmfdb.org/EllipticCurve/Q/1296h2}{\texttt{1296h2}}\\  \cline{2-3}  
 & $ (2),(3)^2,(4)^2,(2,6)$ &  \href{http://www.lmfdb.org/EllipticCurve/Q/1296j1}{\texttt{1296j1}}\\  \cline{2-3}  
 & $ (2),(3)^2,(4),(12),(2,2)$ &  \href{http://www.lmfdb.org/EllipticCurve/Q/1296j2}{\texttt{1296j2}}\\  \cline{2-3}  
 & $ (2),(3)^2,(5),(6),(2,10)$ &  \href{http://www.lmfdb.org/EllipticCurve/Q/1600q1}{\texttt{1600q1}}\\  \cline{2-3}  
 & $ (2),(3)^2,(5),(10),(2,6)$ &  \href{http://www.lmfdb.org/EllipticCurve/Q/1600v3}{\texttt{1600v3}}\\  \cline{2-3}  
 & $ (2),(3)^2,(6),(2,2),(3,3)$ &  \href{http://www.lmfdb.org/EllipticCurve/Q/44a2}{\texttt{44a2}}\\  \cline{2-3}  
 & $ (2),(3)^2,(6),(2,2),(3,9)$ &  \href{http://www.lmfdb.org/EllipticCurve/Q/486c2}{\texttt{486c2}}\\  \cline{2-3}  
 & $ (2),(3)^2,(6)^2,(2,2)$ &  \href{http://www.lmfdb.org/EllipticCurve/Q/175b2}{\texttt{175b2}}\\  \cline{2-3}  
 & $ (2),(3)^2,(6),(7),(2,2)$ &  \href{http://www.lmfdb.org/EllipticCurve/Q/1296e2}{\texttt{1296e2}}\\  \cline{2-3}  
 & $ (2),(3)^2,(6),(9),(2,2)$ &  \href{http://www.lmfdb.org/EllipticCurve/Q/175b1}{\texttt{175b1}}\\  \cline{2-3}  
 & $ (2),(3)^2,(6),(9),(2,6)$ &  \href{http://www.lmfdb.org/EllipticCurve/Q/1728e2}{\texttt{1728e2}}\\  \cline{2-3}  
 & $ (2),(3)^2,(6),(21),(2,2)$ &  \href{http://www.lmfdb.org/EllipticCurve/Q/1296e1}{\texttt{1296e1}}\\  \cline{2-3}  
 & $ (2),(3),(9),(18),(2,2),(3,9)$ &  \href{http://www.lmfdb.org/EllipticCurve/Q/54a2}{\texttt{54a2}}\\  \cline{2-3}  
 & $ (3)^2,(4),(12),(3,3),(4,4)$ &  \href{http://www.lmfdb.org/EllipticCurve/Q/162d2}{\texttt{162d2}}\\  \cline{2-3}  
 & $ (2),(3)^2,(4)^2,(6),(2,2)$ &  \href{http://www.lmfdb.org/EllipticCurve/Q/4050g2}{\texttt{4050g2}}\\  \cline{2-3}  
 & $ (2),(3)^2,(4),(12),(2,2),(3,3)$ &  \href{http://www.lmfdb.org/EllipticCurve/Q/162a2}{\texttt{162a2}}\\  \cline{2-3}  
 & $ (2),(3)^2,(5),(6),(10),(2,2)$ &  \href{http://www.lmfdb.org/EllipticCurve/Q/400b1}{\texttt{400b1}}\\  \cline{2-3}  
 & $ (2),(3)^2,(6)^2,(9),(2,2)$ &  \href{http://www.lmfdb.org/EllipticCurve/Q/432a1}{\texttt{432a1}}\\  \cline{2-3}  
 & $ (2),(3)^2,(6),(7),(2,2),(3,3)$ &  \href{http://www.lmfdb.org/EllipticCurve/Q/162b4}{\texttt{162b4}}\\  \cline{2-3}  
 & $ (2),(3)^2,(6),(7),(21),(2,2)$ &  \href{http://www.lmfdb.org/EllipticCurve/Q/7938u3}{\texttt{7938u3}}\\  \cline{2-3}  
 & $ (2),(3)^2,(6),(21),(2,2),(3,3)$ &  \href{http://www.lmfdb.org/EllipticCurve/Q/162c2}{\texttt{162c2}}\\  \cline{2-3}  
 & $ (2),(3)^2,(9)^2,(2,6),(3,3)$ &  \href{http://www.lmfdb.org/EllipticCurve/Q/27a2}{\texttt{27a2}}\\  \cline{2-3}  
 & $ (2),(3)^2,(6),(7),(21),(2,2),(3,3)$ &  \href{http://www.lmfdb.org/EllipticCurve/Q/162b2}{\texttt{162b2}}\\  \cline{2-3}  
 & $ (2),(3)^2,(6),(9)^2,(2,2),(3,3)$ & \href{http://www.lmfdb.org/EllipticCurve/Q/19a2}{\texttt{19a2}}\\  \cline{2-3}  
 & $ (2),(3)^2,(5),(6),(10),(15),(2,2),(3,3)$ &  \href{http://www.lmfdb.org/EllipticCurve/Q/50a4}{\texttt{50a4}}\\ \hline
 \end{tabular}
 \end{table}
}

{\footnotesize
\scriptsize
\renewcommand{\arraystretch}{1.2}
\begin{table}[ht]
\begin{tabular}{ccc}
\begin{tabular}{|c|l|c|}
\hline
$G$ & $\mathcal{H}_\Q(6,E)$ & Label\\
\hline
 \multirow{28}{*}{$(2)$} & $ (2,2)$ &   \href{http://www.lmfdb.org/EllipticCurve/Q/46a1}{\texttt{46a1}}\\  \cline{2-3}  
 & $ (2,10)$ &   \href{http://www.lmfdb.org/EllipticCurve/Q/450a3}{\texttt{450a3}}\\  \cline{2-3}  
 & $ (2,2),(2,14)$ &   \href{http://www.lmfdb.org/EllipticCurve/Q/49a1}{\texttt{49a1}}\\  \cline{2-3}  
 & $ (6),(2,6)$ &   \href{http://www.lmfdb.org/EllipticCurve/Q/80b1}{\texttt{80b1}}\\  \cline{2-3}  
 & $ (10),(2,2)$ &   \href{http://www.lmfdb.org/EllipticCurve/Q/150b3}{\texttt{150b3}}\\  \cline{2-3}  
 & $ (14),(2,2)$ &   \href{http://www.lmfdb.org/EllipticCurve/Q/49a2}{\texttt{49a2}}\\  \cline{2-3}  
 & $ (4)^2,(2,2)$ &   \href{http://www.lmfdb.org/EllipticCurve/Q/15a5}{\texttt{15a5}}\\  \cline{2-3}  
 & $ (4),(8),(2,2)$ &   \href{http://www.lmfdb.org/EllipticCurve/Q/24a6}{\texttt{24a6}}\\  \cline{2-3}  
 & $ (4),(16),(2,2)$ &   \href{http://www.lmfdb.org/EllipticCurve/Q/3150bk1}{\texttt{3150bk1}}\\  \cline{2-3}  
 & $ (6),(2,2),(2,6)$ &   \href{http://www.lmfdb.org/EllipticCurve/Q/80b3}{\texttt{80b3}}\\  \cline{2-3}  
 & $ (6),(2,6),(2,18)$ &   \href{http://www.lmfdb.org/EllipticCurve/Q/98a1}{\texttt{98a1}}\\  \cline{2-3}  
 & $ (6),(2,6),(6,6)$ &   \href{http://www.lmfdb.org/EllipticCurve/Q/36a3}{\texttt{36a3}}\\  \cline{2-3}  
 & $ (6)^2,(2,2)$ &   \href{http://www.lmfdb.org/EllipticCurve/Q/80b2}{\texttt{80b2}}\\  \cline{2-3}  
 & $ (8)^2,(2,2)$ &   \href{http://www.lmfdb.org/EllipticCurve/Q/2880bd6}{\texttt{2880bd6}}\\  \cline{2-3}  
 & $ (14),(2,2),(2,14)$ &   \href{http://www.lmfdb.org/EllipticCurve/Q/49a3}{\texttt{49a3}}\\  \cline{2-3}  
 & $ (4)^2,(6),(2,6)$ &   \href{http://www.lmfdb.org/EllipticCurve/Q/960o7}{\texttt{960o7}}\\  \cline{2-3}  
 & $ (4)^2,(12),(2,6)$ &   \href{http://www.lmfdb.org/EllipticCurve/Q/450g1}{\texttt{450g1}}\\  \cline{2-3}  
 & $ (4),(6),(12),(2,2)$ &   \href{http://www.lmfdb.org/EllipticCurve/Q/240b3}{\texttt{240b3}}\\  \cline{2-3}  
 & $ (4),(12),(2,2),(2,6)$ &   \href{http://www.lmfdb.org/EllipticCurve/Q/450g3}{\texttt{450g3}}\\  \cline{2-3}  
 & $ (6)^2,(18),(2,2)$ &   \href{http://www.lmfdb.org/EllipticCurve/Q/98a2}{\texttt{98a2}}\\  \cline{2-3}  
 & $ (6),(18),(2,2),(2,6)$ &   \href{http://www.lmfdb.org/EllipticCurve/Q/98a5}{\texttt{98a5}}\\  \cline{2-3}  
 & $ (4)^2,(6),(2,2),(2,6)$ &   \href{http://www.lmfdb.org/EllipticCurve/Q/960o4}{\texttt{960o4}}\\  \cline{2-3}  
 & $ (4)^2,(6)^2,(2,2)$ &   \href{http://www.lmfdb.org/EllipticCurve/Q/150c4}{\texttt{150c4}}\\  \cline{2-3}  
 & $ (4)^2,(6),(12),(2,2)$ &   \href{http://www.lmfdb.org/EllipticCurve/Q/240b1}{\texttt{240b1}}\\  \cline{2-3}  
 & $ (6)^2,(2,2),(2,6),(3,6)$ &   \href{http://www.lmfdb.org/EllipticCurve/Q/20a3}{\texttt{20a3}}\\  \cline{2-3}  
 & $ (4),(6),(12)^2,(2,2),(2,6),(3,12)$ &   \href{http://www.lmfdb.org/EllipticCurve/Q/30a3}{\texttt{30a3}}\\  \cline{2-3}  
 & $ (6)^2,(18)^2,(2,2),(2,6),(3,6)$ &   \href{http://www.lmfdb.org/EllipticCurve/Q/14a3}{\texttt{14a3}}\\  \cline{2-3}  
 & $ (4)^2,(6)^2,(12)^2,(2,2),(2,6),(3,6)$ &   \href{http://www.lmfdb.org/EllipticCurve/Q/30a7}{\texttt{30a7}}\\ \hline
 \cline{2-3}  
 \multirow{14}{*}{$(3)$} & $ (2,6),(3,3)$ &   \href{http://www.lmfdb.org/EllipticCurve/Q/196b1}{\texttt{196b1}}\\  \cline{2-3}  
 & $ (6),(6,6)$ &   \href{http://www.lmfdb.org/EllipticCurve/Q/108a1}{\texttt{108a1}}\\  \cline{2-3}  
 & $ (6),(2,6),(3,3)$ &   \href{http://www.lmfdb.org/EllipticCurve/Q/44a1}{\texttt{44a1}}\\  \cline{2-3}  
 & $  \cellcolor{light-gray} (12),(3,3),(4,12)$ & \cellcolor{light-gray}    \href{http://www.lmfdb.org/EllipticCurve/Q/162d1}{\texttt{162d1}}\\  \cline{2-3}  
 & $ (6),(2,6),(3,3),(3,6)$ &   \href{http://www.lmfdb.org/EllipticCurve/Q/19a1}{\texttt{19a1}}\\  \cline{2-3}  
 & $ (6),(3,3),(3,9),(6,6)$ &   \href{http://www.lmfdb.org/EllipticCurve/Q/27a1}{\texttt{27a1}}\\  \cline{2-3}  
 & $ (6),(9),(2,6),(3,9)$ &   \href{http://www.lmfdb.org/EllipticCurve/Q/486f1}{\texttt{486f1}}\\  \cline{2-3}  
 & $ (6),(21),(2,6),(3,3)$ &   \href{http://www.lmfdb.org/EllipticCurve/Q/162b1}{\texttt{162b1}}\\  \cline{2-3}  
 & $ (6),(2,6),(3,3),(3,6),(3,9)$ &   \href{http://www.lmfdb.org/EllipticCurve/Q/54a1}{\texttt{54a1}}\\  \cline{2-3}  
 & $ (6),(9),(3,3),(3,9),(6,6)$ &   \href{http://www.lmfdb.org/EllipticCurve/Q/27a3}{\texttt{27a3}}\\  \cline{2-3}  
 & $ (6),(9)^2,(2,6),(3,3)$ &   \href{http://www.lmfdb.org/EllipticCurve/Q/19a3}{\texttt{19a3}}\\  \cline{2-3}  
 & $ (6),(12)^2,(2,6),(3,3)$ &   \href{http://www.lmfdb.org/EllipticCurve/Q/162a1}{\texttt{162a1}}\\  \cline{2-3}  
 & $ (6),(15),(30),(2,6),(3,3)$ &   \href{http://www.lmfdb.org/EllipticCurve/Q/50a3}{\texttt{50a3}}\\  \cline{2-3}  
 & $ (6),(9),(2,6),(3,3),(3,6),(3,9)$ &   \href{http://www.lmfdb.org/EllipticCurve/Q/54b1}{\texttt{54b1}}\\
\hline

\end{tabular}
&
\begin{tabular}{|c|l|c|}
\hline
$G$ & $\mathcal{H}_\Q(6,E)$ & Label\\
\hline
\multirow{9}{*}{$(4)$}  & $(2,4)$ &   \href{http://www.lmfdb.org/EllipticCurve/Q/17a1}{\texttt{17a1}}\\  \cline{2-3}  
 & $ (2,8)$ &   \href{http://www.lmfdb.org/EllipticCurve/Q/192c6}{\texttt{192c6}}\\  \cline{2-3}  
 & $ (4,4)$ &   \href{http://www.lmfdb.org/EllipticCurve/Q/40a4}{\texttt{40a4}}\\  \cline{2-3}  
 & $ (12),(2,12)$ &   \href{http://www.lmfdb.org/EllipticCurve/Q/150c3}{\texttt{150c3}}\\  \cline{2-3}  
 & $ (8)^2,(2,4)$ &   \href{http://www.lmfdb.org/EllipticCurve/Q/15a7}{\texttt{15a7}}\\  \cline{2-3}  
 & $ (8)^2,(2,8)$ &   \href{http://www.lmfdb.org/EllipticCurve/Q/240d6}{\texttt{240d6}}\\  \cline{2-3}  
 & $ (12),(2,4),(2,12)$ &   \href{http://www.lmfdb.org/EllipticCurve/Q/150c1}{\texttt{150c1}}\\  \cline{2-3}  
 & $ (12)^2,(2,4)$ &   \href{http://www.lmfdb.org/EllipticCurve/Q/720j5}{\texttt{720j5}}\\  \cline{2-3}  
 & $ (12)^2,(2,4),(2,12),(3,12)$ &   \href{http://www.lmfdb.org/EllipticCurve/Q/90c1}{\texttt{90c1}}\\ \hline
\hline
\multirow{2}{*}{$(5)$}  & $(10),(2,10)$ &   \href{http://www.lmfdb.org/EllipticCurve/Q/11a1}{\texttt{11a1}}\\  \cline{2-3}
 & $(10),(15)^2,(30),(2,10)$ &   \href{http://www.lmfdb.org/EllipticCurve/Q/50b1}{\texttt{50b1}}\\ \hline

\multirow{5}{*}{$(6)$} & $(2,6),(3,6)$ &   \href{http://www.lmfdb.org/EllipticCurve/Q/14a1}{\texttt{14a1}}\\  \cline{2-3}  
 & $ (2,6),(6,6)$ &   \href{http://www.lmfdb.org/EllipticCurve/Q/36a1}{\texttt{36a1}}\\  \cline{2-3}  
 & $ (12)^2,(2,6),(3,6)$ &   \href{http://www.lmfdb.org/EllipticCurve/Q/30a4}{\texttt{30a4}}\\  \cline{2-3}  
 & $ (12)^2,(2,6),(3,12)$ &   \href{http://www.lmfdb.org/EllipticCurve/Q/30a1}{\texttt{30a1}}\\  \cline{2-3}  
 & $ (18)^2,(2,6),(2,18),(3,6)$ &  \href{http://www.lmfdb.org/EllipticCurve/Q/14a4}{\texttt{14a4}}\\ \hline
\hline
$(7)$ & $(14),(2,14)$ &   \href{http://www.lmfdb.org/EllipticCurve/Q/26b1}{\texttt{26b1}}\\ \hline
\hline
\multirow{2}{*}{$(8)$} & $(2,8)$ &   \href{http://www.lmfdb.org/EllipticCurve/Q/14a4}{\texttt{15a4}}\\  \cline{2-3}  
 & $(16)^2,(2,8)$ &   \href{http://www.lmfdb.org/EllipticCurve/Q/210e1}{\texttt{210e1}}\\ \hline
\hline
$(9)$ & $(18),(2,18),(3,9)$ &   \href{http://www.lmfdb.org/EllipticCurve/Q/54b3}{\texttt{54b3}}\\ \hline
\hline
$(10)$ & $(2,10)$ &   \href{http://www.lmfdb.org/EllipticCurve/Q/66c1}{\texttt{66c1}}\\ \hline
\hline
$(12)$ & $(2,12),(3,12)$ &   \href{http://www.lmfdb.org/EllipticCurve/Q/90c3}{\texttt{90c3}}\\ \hline
 \cline{2-3}  
\multirow{12}{*}{$(2,2)$} & $ (2,4)$ &   \href{http://www.lmfdb.org/EllipticCurve/Q/33a1}{\texttt{33a1}}\\  \cline{2-3}  
 & $ (2,8)$ &   \href{http://www.lmfdb.org/EllipticCurve/Q/63a2}{\texttt{63a2}}\\  \cline{2-3}  
 & $ (2,4)^2$ &   \href{http://www.lmfdb.org/EllipticCurve/Q/17a2}{\texttt{17a2}}\\  \cline{2-3}  
 & $ (2,4),(2,8)$ &   \href{http://www.lmfdb.org/EllipticCurve/Q/75b3}{\texttt{75b3}}\\  \cline{2-3}  
 & $ (2,6)^2$ &   \href{http://www.lmfdb.org/EllipticCurve/Q/240b2}{\texttt{240b2}}\\  \cline{2-3}  
 & $ (2,6),(2,12)$ &   \href{http://www.lmfdb.org/EllipticCurve/Q/960o6}{\texttt{960o6}}\\  \cline{2-3}  
 & $ (2,4)^3$ &   \href{http://www.lmfdb.org/EllipticCurve/Q/15a2}{\texttt{15a2}}\\  \cline{2-3}  
 & $ (2,4)^2,(2,8)$ &   \href{http://www.lmfdb.org/EllipticCurve/Q/510e5}{\texttt{510e5}}\\  \cline{2-3}  
 & $ (2,4),(2,6)^2$ &   \href{http://www.lmfdb.org/EllipticCurve/Q/150c2}{\texttt{150c2}}\\  \cline{2-3}  
 & $ (2,4),(2,6),(2,12)$ &   \href{http://www.lmfdb.org/EllipticCurve/Q/960o2}{\texttt{960o2}}\\  \cline{2-3}  
 & $ (2,6)^2,(6,6)$ &   \href{http://www.lmfdb.org/EllipticCurve/Q/30a6}{\texttt{30a6}}\\  \cline{2-3}  
 & $ (2,4),(2,6)^2,(2,12),(6,6)$ &   \href{http://www.lmfdb.org/EllipticCurve/Q/90c2}{\texttt{90c2}}\\ \hline
 \cline{2-3}  
\multirow{5}{*}{$(2,4)$} & $ (2,8)$ &   \href{http://www.lmfdb.org/EllipticCurve/Q/15a3}{\texttt{15a3}}\\  \cline{2-3}  
 & $ (4,4)$ &   \href{http://www.lmfdb.org/EllipticCurve/Q/195a3}{\texttt{195a3}}\\  \cline{2-3}  
 & $ (2,8)^2$ &   \href{http://www.lmfdb.org/EllipticCurve/Q/1230f2}{\texttt{1230f2}}\\  \cline{2-3}  
 & $ (2,8),(4,4)$ &   \href{http://www.lmfdb.org/EllipticCurve/Q/15a1}{\texttt{15a1}}\\  \cline{2-3}  
 & $ (2,8)^2,(4,4)$ &   \href{http://www.lmfdb.org/EllipticCurve/Q/210e3}{\texttt{210e3}}\\ \hline
 \cline{2-3}  
 \multirow{2}{*}{$(2,6)$} & $ (6,6)$ &   \href{http://www.lmfdb.org/EllipticCurve/Q/30a2}{\texttt{30a2}}\\  \cline{2-3}  
 & $ (2,12),(6,6)$ &   \href{http://www.lmfdb.org/EllipticCurve/Q/90c6}{\texttt{90c6}}\\ \hline
\end{tabular}
\end{tabular}
\end{table}
}


\begin{thebibliography}{11}



\bibitem{curse}
J.~S.~Balakrishnan, N.~Dogra, J.~S.~M\"uller, J.~Tuitman, and J.~Vonk,
{\em Explicit Chabauty--Kim for the Split Cartan Modular Curve of Level 13}.
\href{http://arxiv.org/abs/1711.05846}{arXiv:1711.05846}.

\bibitem{magma}
W.~Bosma, J.~Cannon, C.~Fieker, and A.~Steel (eds.), 
{\em {H}andbook of {M}agma functions, {E}dition 2.23}. 
\url{http://magma.maths.usyd.edu.au/magma}, 2018.

\bibitem{Chou16}
M.~Chou, 
{Torsion of rational elliptic curves over quartic Galois number fields}. 
J.~Number Theory {\bf 160} (2016), 603--628.

\bibitem{Clark2014}
P.~L.~Clark, P.~Corn, A.~Rice, and J.~Stankewicz,
 \emph{Computation on elliptic curves with complex multiplication}. 
 LMS J.~Comput.~Math. \textbf{17} (2014), 509--535.
 
\bibitem{cremonaweb}
J.~E.~Cremona, 
{\em ecdata: 2016-10-17 (Elliptic curve data for conductors up to 400.000)}.  
Available on \url{http://johncremona.github.io/ecdata/}.


\bibitem{magmascripts}
H.~B.~Daniels, and E.~Gonz\'alez-Jim\'enez, 
Magma scripts related to {\it On the torsion of rational elliptic curves over sextic fields}. 
available at \url{http://matematicas.uam.es/~enrique.gonzalez.jimenez/}.


\bibitem{Q3}
H.~B.~Daniels, \'A.~Lozano-Robledo, F.~Najman, and A.~V.~Sutherland, 
{\it Torsion subgroups of rational elliptic curves over the compositum of all cubic fields}.
Math.~Comp. \textbf{87} (2018), 425--458.

\bibitem{DS17}
M.~Derickx and A.~V.~Sutherland, 
{\em Torsion subgroups of elliptic curves over quintic and sextic number fields}.
Proc.~Amer.~Math.~Soc. \textbf{145} (2017), no. 10, 4233--4245.

\bibitem{FSWZ90}
G.~Fung, H.~Str\"oher, H.~Williams, and H.~Zimmer,
{\em Torsion groups of elliptic curves with integral $j$-invariant over pure cubic fields}.
J.~Number Theory {\bf 36} (1990) 12--45.

\bibitem{GJ17} 
E.~Gonz\'alez-Jim\'enez, 
{\em Complete classification of the torsion structures of rational elliptic curves over quintic number fields}.  
J.~Algebra {\bf 478} (2017), 484--505.

\bibitem{GJLR17} 
E.~Gonz\'alez-Jim\'enez, and \'A.~Lozano-Robledo, 
{\em On the minimal degree of definition of p-primary torsion subgroups of elliptic curves}. 
Math.~Res.~Lett. {\bf 24} (2017), 1067--1096. 
(data file \texttt{2primary\_Ss.txt} available at \url{http://matematicas.uam.es/~enrique.gonzalez.jimenez/})

\bibitem{GJLR18} 
E.~Gonz\'alez-Jim\'enez, and \'A.~Lozano-Robledo, 
{\em On the torsion of rational elliptic curves over quartic fields}.
Math.~Comp. {\bf 87} (2018), 1457--1478. 

\bibitem{GJN16} E.~Gonz\'alez-Jim\'enez, and F.~Najman, 
{\em Growth of torsion groups of elliptic curves upon base change}. 
preprint, \href{http://arxiv.org/abs/1609.02515}{arXiv:1609.02515}.

\bibitem{GJN18} E.~Gonz\'alez-Jim\'enez, and F.~Najman, 
{\em An algorithm for determining torsion growth of elliptic curves}. 
In preparation.

\bibitem{GJNT16}
E.~Gonz\'alez-Jim\'enez, F.~Najman, and J.M.~Tornero, 
{\em Torsion of rational elliptic curves over cubic fields}. 
Rocky Mountain J.~Math. {\bf 46} (2016), no. 6, 1899--1917.

\bibitem{GJT14}
E.~Gonz\'alez-Jim\'enez, and J.M.~Tornero, 
{\em Torsion of rational elliptic curves over quadratic fields}. 
Rev.~R.~Acad.~Cienc.~Exactas F\'is.~Nat.~Ser.~A Math.~RACSAM {\bf 108} (2014), 923--934.

\bibitem{GJT15}
E.~Gonz\'alez-Jim\'enez, and J.M.~Tornero, 
{\em Torsion of rational elliptic curves over quadratic fields II}. 
Rev.~R.~Acad.~Cienc.~Exactas F\'is.~Nat.~Ser.~A Math.~RACSAM {\bf 110} (2016), 121--143. 

\bibitem{hoeij}
M.~van Hoeij, 
{\em Low Degree Places on the Modular Curve $X_1(N)$}. 
\href{http://arxiv.org/abs/1202.4355}{arXiv:1202.4355}. 
(data file available at \url{https://www.math.fsu.edu/~hoeij/files/X1N/LowDegreePlaces})


\bibitem{JKL13}
D.~Jeon, C.~H.~Kim, and Y.~Lee, 
{\em Infinite families of elliptic curves over dihedral quartic number fields}. 
J.~Number Theory {\bf 133} (2013), 115--122.

\bibitem{JKP04}
D.~Jeon, C.~H.~Kim, and A.~Schweizer, 
{\em On the torsion of elliptic curves over cubic number fields}. 
Acta Arith. {\bf 113} (2004), 291--301.

\bibitem{JKP06}
D.~Jeon, C.~H.~Kim, and E.~Park, 
{\em  On the torsion of elliptic curves over quartic number fields}. 
J.~London Math.~Soc. (2) {\bf 74} (2006), 1--12.

\bibitem{K92}
S.~Kamienny, 
{\em Torsion points on elliptic curves and $q$-coefficients of modular forms}. 
Invent.~Math. {\bf 109} (1992), 221--229.

\bibitem{kenku2} 
M.~A.~Kenku, 
{\em The modular curve $X_0(39)$ and rational isogeny}. 
Math.~Proc.~Cambridge Philos.~Soc. \textbf{85} (1979), 21--23.

\bibitem{kenku3} 
M.~A.~Kenku, 
{\em The modular curves $X_0(65)$ and $X_0(91)$ and rational isogeny}. 
Math.~Proc.~Cambridge Philos. Soc. \textbf{87} (1980), 15--20.

\bibitem{kenku4} 
M.~A.~Kenku, 
{\em The modular curve $X_0(169)$ and rational isogeny}.
J.~London Math.~Soc. (2) \textbf{22} (1980), 239--244.

\bibitem{kenku5} 
M.~A.~Kenku, 
{\em The modular curve $X_0(125)$, $X_1(25)$ and $X_1(49)$}. 
J.~London Math.~Soc. (2) \textbf{23} (1981), 415--427.

\bibitem{KM88}
M.~A.~Kenku, and F.~Momose, 
{\em Torsion points on elliptic curves defined over quadratic fields}.
Nagoya Math.~J. {\bf 109} (1988), 125--149.

\bibitem{Kubert}
D.~S.~Kubert, 
{\em Universal bounds on the torsion of elliptic curves}. 
Proc.~London Math.~Soc. {\bf 33} (1976), 193--237.



\bibitem{lmfdb} LMFDB Collaboration, 
{\it The L-functions and modular forms database}. 
available at \url{http://www.lmfdb.org}.

\bibitem{Lozano}
A.~Lozano-Robledo, 
{\em On the field of definition of p-torsion points on elliptic curves over the rationals}.
Math.~Ann.{\bf 357} (2013), 279--305.


\bibitem{Mazur1978}
B.~Mazur, 
{\em Rational isogenies of prime degree}. 
Invent.~Math.{\bf 44} (1978), 129--162.

\bibitem{Merel}
L.~Merel, 
{\em Bornes pour la torsion des courbes elliptiques sur les corps de nombres}. 
Invent.~Math.~{\bf 124} (1996), 437--449.

\bibitem{MSZ89}
H.~M\"uller, H.~Str\"oher, and H.~Zimmer,
{\em Torsion groups of elliptic curves with integral j-invariant over quadratic fields}.
J.~Reine Angew.~Math. {\bf 397} (1989), 100--161.


\bibitem{N15a}
F.~Najman, 
{\em Torsion of elliptic curves over cubic fields and sporadic points on $X_1(n)$}.
Math.~Res.~Lett. {\bf 23} (2016), 245--272.

\bibitem{Olson74}
L.~Olson,
{\em Points of finite order on elliptic curves with complex multiplication}.
Manuscripta Math. {\bf 14} (1974), 195--205.


\bibitem{PWZ97}
A.~Petho, T.~Weis, and H.~Zimmer,
{\em Torsion groups of elliptic curves with integral j-invariant over general cubic number fields}.
Int.~J.~Algebra Comput. {\bf 7} (1997), 353--413.


\bibitem{RZB} 
J.~Rouse, D.~Zureick-Brown, 
{\em Elliptic curves over $\Q$ and $2$-adic images of Galois}. 
Res.~Number Theory \textbf{1} (2015), 34 pages. 
(data files and subgroup descriptions available at \url{http://users.wfu.edu/rouseja/2adic/}).

\bibitem{SilvermanAdTopics}
J.~Silverman, 
{\em Advanced topics in the arithmetic of elliptic curves}. 
Graduate Texts in Mathematics, 151. Springer-Verlag, New York, 1994. xiv+525 pp.

\bibitem{Sutherland2}
A.~V.~Sutherland, 
{\em Computing images of Galois representations attached to elliptic curves}.
Forum Math.~Sigma {\bf 4} (2016), e4, 79 pp.


\bibitem{zywina1} 
D.~Zywina, 
{\em On the possible images of the mod $\ell$ representations associated to elliptic curves over $\Q$}. 
\href{http://arxiv.org/abs/1508.07660}{arXiv:1508.07660}.

\end{thebibliography}
\end{document}